\newtheorem{thm}{Theorem}[section]
\newtheorem*{thm*}{Theorem}
\newtheorem*{cor*}{Corollary}
\newtheorem*{prop*}{Proposition}
\newtheorem{cor}[thm]{Corollary}
\newtheorem*{notation*}{Notation}
\newtheorem{example}[thm]{Example}
\newtheorem{defn}[thm]{Definition}
\newtheorem*{defn*}{Definition}
\newtheorem{prop}[thm]{Proposition}
\newtheorem{lem}[thm]{Lemma}
\newtheorem*{conj*}{Conjecture}
\newtheorem*{quest*}{Question}
\theoremstyle{definition}
\newtheorem{rem}[thm]{Remark}
\newtheorem{conv}[thm]{Convention}
\author{Yingzi Yang}\address{Department of Mathematics, Stanford University}\email{yyingzi@stanford.edu}
\title{Closed colored models and Demazure crystals}{}
\begin{document}

\maketitle
\begin{abstract}
We will construct solvable lattice models whose partition functions are Demazure characters. We will construct a crystal structure on the states of the model and prove that the states of the closed model form a Demazure crystal.
\end{abstract}
\tableofcontents

\section{Introduction}

\subsection{Motivation and main results}
Lattice models have their origins in the study of physical systems, such as crystal ice \cite{onsager1944crystal}, and have found applications across various areas of mathematics \cite{andrews1984eight,jones1989knot,korepin1997quantum}, to name a few. Notable examples include Kuperberg’s proof for the enumeration of alternating sign matrices \cite{Kuperberg}. One particularly significant model is the eight-vertex model, which was solved by Baxter \cite{baxter} using the Yang–Baxter equation and a train argument.

The six-vertex and five-vertex models are well-known examples of solvable lattice models. Each model admits a colored variant, in which the partition function depends on a permutation that records the ordering of colors in the boundary conditions. In this paper, we study the closed colored five-vertex models, introduced by Buciumas and Scrimshaw \cite{buciumas2022quasi}. Their open counterparts, the open colored five-vertex models, were introduced and studied in \cite{BBBG}.

The Tokuyama model is a solvable lattice model $\mathfrak{S}_{\lambda}({\bf z};q)$ depending on a partition $\lambda=(\lambda_1,\lambda_2,...,\lambda_r)$, parameters ${\bf z}=(z_1,...,z_r)$, and a deformation parameter $q$ \cite{tokuyama,hamel2007bijective, brubaker2011schur}. When $q=0$, the five-vertex variant has partition function equal to ${\bf z}^{\rho}s_\lambda({\bf z})$, where $\rho=(r-1,r-2,...,1,0),$ and $s_\lambda$ is the Schur polynomial. The closed colored five-vertex models may also be viewed as colored variants of the models $\mathfrak{S}_{\lambda}({\bf z};0)$. As have mentioned above, coloring introduces a permutation $w$ into the boundary conditions.  We will denote these colored models by $\mathfrak{S}_{\lambda}^\bullet({\bf z},w)$ or simply $\mathfrak{S}_{\lambda}^\bullet(w)$ for the closed case, and by $\mathfrak{S}_{\lambda}^\circ({\bf z},w)$ or $\mathfrak{S}_{\lambda}^\circ(w)$  for the open case.

Type A Demazure atoms and Demazure characters can be viewed as ``pieces" of Schur polynomials. Let $\partial_w$ and $\partial^\circ_w$ denote the Demazure operators to be reviewed in \cref{sec:crystal}. The Demazure character is given by $\partial_w{\bf z}^\lambda$, and the Demazure atom by $\partial^\circ_w{\bf z}^\lambda$. The Schur polynomial $s_\lambda({\bf z})$ is a sum of monomial terms, and both $\partial_w({\bf z}^\lambda)$ and $\partial^\circ_w({\bf z}^\lambda)$ correspond to sums over specific subsets of these monomials.

Demazure characters arise as characters of key submodules of highest weight representations of semisimple Lie algebras. Their structure encodes deep phenomena, including the singularities of Schubert varieties and aspects of equivariant $K$-theory. Demazure atoms, in turn, provide a finer, “atomic” decomposition of Demazure characters, isolating the smallest combinatorial pieces underlying highest-weight submodules. 

Therefore, finding solvable lattice models whose partition functions compute Demazure characters or Demazure atoms provides not only an exactly solvable statistical interpretation of these algebraic objects, but also a powerful new tool for exploring representation theory via solvability.

The open colored five-vertex models $\mathfrak{S}_{\lambda}^\circ(w)$ in the work of Brubaker, Bump, Buciumas, and Gustafsson \cite{BBBG} is one such family of models. Using the Yang–Baxter equation, they established a recursive formula for the partition functions and proved that the partition function $Z(\mathfrak{S}_{\lambda}^\circ({\bf z},w))$ is equal to the Demazure atom:
\begin{equation}\label{eq:1}
 Z(\mathfrak{S}_{\lambda}^\circ({\bf z},w))=\partial^\circ_w({\bf z}^\lambda).
\end{equation}
Similarly, it was shown in \cite{buciumas2022quasi}, also using the Yang–Baxter equation, that the partition function of the closed colored five-vertex model $\mathfrak{S}_{\lambda,w}^\bullet({\bf z})$ is equal to the Demazure character:
\begin{equation}\label{eq:2}
 Z(\mathfrak{S}_{\lambda}^\bullet({\bf z},w))=\partial_w({\bf z}^\lambda).  
\end{equation}
Meanwhile, Demazure atoms and Demazure characters satisfy the following relation (see Theorem 2.1 of \cite{BBBG}):
\begin{equation}\label{eq:3}
\partial_w({\bf z}^\lambda)=\sum_{y\leq w} \partial^\circ_y({\bf z}^\lambda).
\end{equation} 
This leads to the following relation between partition functions
\begin{equation}\label{eq:4}
Z(\mathfrak{S}_{\lambda}^\bullet({\bf z},w))=\sum_{y\leq w} Z(\mathfrak{S}_{\lambda}^\circ({\bf z},y))
\end{equation}

Demazure atoms and Demazure characters can also be realized as crystal characters of specific subsets of the crystal $\mathcal{B}_\lambda$, known as {\em (crystal) Demazure atoms} and {\em Demazure crystals}, respectively. Given that the states of both models can be embedded into the crystal $\mathcal{B}_\lambda$, the identifications \eqref{eq:1} and \eqref{eq:2} suggest a deeper algebraic structure underlying the models. In particular, they naturally lead to the prediction that the sets of states—over which the partition functions sum—carry the structure of Demazure atoms and Demazure crystals, respectively. Recently \cite{xiao2026hybrid} also has results about models whose partition functions are Demazure characters.

Verifying this prediction at the level of states is considerably more subtle than at the level of partition functions. In the open case, this was established in \cite{BBBG} (see also \cref{thm1:BBBG3}); for the closed case, this forms the main result of the present paper (\cref{mainthm}). We now elaborate on this aspect in more detail.

Let $\mathcal{B}_\lambda$ be the Kashiwara-Nakashima crystal of semistandard Young tableaux of a fixed shape $\lambda$ in alphabet $\{1,2,...,r\}$, where $r$ is the length of $\lambda$. The Demazure crystals, denoted $\mathcal{B}_\lambda(w)$, are certain subsets of $\mathcal{B}_\lambda$ indexed by elements $w\in W$ the Weyl group. Kashiwara \cite{kashiwara} and Littelmann \cite{littelmann} showed that the Demazure character can be recovered by summing the weights of the elements in the corresponding Demazure crystal:
\begin{equation}
   \text{ch}(\mathcal{B}_\lambda(w))=\partial_w({\bf z}^\lambda). 
\end{equation}

The crystal Demazure atoms, denoted by $\mathcal{B}^\circ_\lambda(w)$'s, are disjoint subsets of $\mathcal{B}_\lambda$ such that 
\begin{equation}\label{eq:6} 
\mathcal{B}_{\lambda}(w)=\bigcup_{y\leq w}\mathcal{B}^\circ_\lambda(y).
\end{equation} 
Note that if $y'\leq y$ then $\mathcal{B}_\lambda(y')\subseteq \mathcal{B}_\lambda(y)$. The crystal characters can be identified with polynomial Demazure atoms: 
\begin{equation}
   \text{ch}(\mathcal{B}^\circ_\lambda(w))=\partial^\circ_w({\bf z}^\lambda). 
\end{equation}

Given a state $\mathfrak{s}$ of the model $\mathfrak{S}^\bullet_{\lambda}(w)$ or $\mathfrak{S}^\circ_{\lambda}(w)$, there is an associated semistandard Young tableaux $\mathfrak{T}(\mathfrak{s})$ in $\mathcal{B}_{\lambda}$. The relation between the open states for the colored five-vertex models $\mathfrak{S}^\circ_{\lambda}(w)$ and crystal Demazure atoms were studied in \cite{BBBG}. They prove the following refinement of (\ref{eq:1}):

\begin{thm}\label{thm1:BBBG3}
For $y\in W$, let $\mathfrak{S}^\circ_{\lambda}(y)$ (by abuse of notation) be the set of states for the model $\mathfrak{S}^\circ_{\lambda}(y)$. Let $v\mapsto v'$ be the Lascoux-Schützenberger involution on $
\mathcal{B}_\lambda$. If $y\in W$ is the longest element of the coset in $W/W_\lambda$, then the map $\mathfrak{s}\mapsto \mathfrak{T}(\mathfrak{s})'$ is a bijective map $\mathfrak{S}^\circ_{\lambda}(y)\rightarrow \mathcal{B}^\circ_\lambda(y)$.
\end{thm}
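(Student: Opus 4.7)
The plan is to establish the bijection by induction on $\ell(y)$, using the Yang--Baxter equation to produce a state-level recursion on the model side that mirrors the crystal-theoretic recursion defining $\mathcal{B}^\circ_\lambda(y)$, with the Lascoux--Sch\"utzenberger involution reconciling the opposite orientations of the two recursions.

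\medskip

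\emph{Base case.} When $y$ is the shortest among the longest coset representatives in $W/W_\lambda$ (so $y$ is the longest element of $W_\lambda$ itself), I expect $\mathfrak{S}^\circ_\lambda(y)$ to reduce to a single ``ground'' state whose tableau $\mathfrak{T}(\mathfrak{s})$ is an extremal filling of shape $\lambda$, while $\mathcal{B}^\circ_\lambda(y)$ likewise contains a single element by the standard description of Demazure atoms at this extremal vertex. One verifies directly that $'$ sends this ground tableau to that single crystal element.

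\medskip

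\emph{Inductive step.} For $y$ longest in its coset with $\ell(y)$ above the base case, choose a simple reflection $s_i$ such that $s_i y < y$ and $s_i y$ is again the longest element of its coset. Slotting an $R$-matrix into row $i$ of the lattice and running the train argument from \cite{BBBG} gives the identity
\begin{equation*}
Z(\mathfrak{S}^\circ_\lambda(y)) \;=\; \partial^\circ_i\, Z(\mathfrak{S}^\circ_\lambda(s_i y)),
\end{equation*}
together with an explicit dictionary between admissible $R$-vertex configurations and ``color-swap'' moves on rows $i,i+1$ of a state $\mathfrak{s}_0 \in \mathfrak{S}^\circ_\lambda(s_i y)$. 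The first subtask is to lift this partition-function identity to a set-level statement: every $\mathfrak{s} \in \mathfrak{S}^\circ_\lambda(y)$ arises from a unique $\mathfrak{s}_0 \in \mathfrak{S}^\circ_\lambda(s_i y)$ by a controlled color-swap, so that the forgetful map fibers $\mathfrak{S}^\circ_\lambda(y)$ over $\mathfrak{S}^\circ_\lambda(s_i y)$ with fibers parameterized by the intermediate vertex patterns.

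\medskip

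\emph{Matching via the LS involution.} On the crystal side, one has $\mathcal{B}^\circ_\lambda(y) = \mathcal{D}^\circ_i \mathcal{B}^\circ_\lambda(s_i y)$, where $\mathcal{D}^\circ_i$ collects the interiors of the $f_i$-strings emanating from elements of $\mathcal{B}^\circ_\lambda(s_i y)$. The LS involution intertwines $f_i$ with $e_{r-i}$ on $\mathcal{B}_\lambda$ and preserves each $\mathcal{B}_\lambda$ setwise, so $'$ transports this recursion into exactly the shape produced by the color-swap on $\mathfrak{T}(\mathfrak{s})$ under the present orientation conventions. Combined with the inductive hypothesis that $\mathfrak{s}_0 \mapsto \mathfrak{T}(\mathfrak{s}_0)'$ is a bijection $\mathfrak{S}^\circ_\lambda(s_i y) \to \mathcal{B}^\circ_\lambda(s_i y)$, this yields the bijection for $y$.

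\medskip

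\emph{Main obstacle.} The hardest step is the state-level lift of the Yang--Baxter recursion: the train argument is inherently an identity of Boltzmann-weighted sums, and extracting a concrete bijection of states requires a case-by-case analysis of the admissible $R$-vertex configurations along rows $i,i+1$ and a verification that each one corresponds, under $\mathfrak{T}$ followed by $'$, to a unique Kashiwara move implementing $\mathcal{D}^\circ_i$. The hypothesis that $y$ is the longest element of its coset is essential here: it guarantees that the simple reflection used in each inductive step acts non-trivially on the Demazure atom (no collapse), and it is also what aligns the colored boundary paths of the model with the crystal strings relevant to $f_i$, so that the fibration above is well-defined and matches the crystal-side operator exactly.
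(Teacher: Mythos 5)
Your proposal follows the original Yang--Baxter/train-argument route of \cite{BBBG}, which is explicitly \emph{not} the route this paper takes: the paper first proves the closed-state analogue (\cref{mainthm}) by purely combinatorial means --- path ``adjustments'' converting open states to closed states and altering boundary flags (\cref{prop:wstate}, \cref{prop:y>w}, \cref{prop:y<w}), which give the state-level decomposition $\mathfrak{S}^\bullet_\lambda(w)\cong\bigcup_{y\le w}\mathfrak{S}^\circ_\lambda(y)$, followed by a direct analysis of how the crystal operators $e_i$ act on closed states --- and then recovers \cref{thm1:BBBG3} by comparing that decomposition with $\mathcal{B}_\lambda(w)=\bigcup_{y\le w}\mathcal{B}^\circ_\lambda(y)$ and invoking the uniqueness of the family of crystal Demazure atoms. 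No $R$-matrix or partition-function identity enters the paper's argument at all.

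The genuine gap in your proposal is the step you yourself flag as the ``main obstacle'': lifting the identity $Z(\mathfrak{S}^\circ_\lambda(y))=\partial^\circ_i Z(\mathfrak{S}^\circ_\lambda(s_iy))$ to a set-level fibration of $\mathfrak{S}^\circ_\lambda(y)$ over $\mathfrak{S}^\circ_\lambda(s_iy)$. The train argument is an identity of Boltzmann-weighted sums obtained by attaching an $R$-vertex and sliding it across the grid; the intermediate objects are states of an auxiliary model and the identity arises from cancellations among their weights, so no canonical state-to-state map falls out of it. Asserting that ``every $\mathfrak{s}$ arises from a unique $\mathfrak{s}_0$ by a controlled color-swap'' is essentially a restatement of the theorem rather than a proof of it, and this is precisely why the state-level result in \cite{BBBG} is proved by a separate combinatorial argument (via Lascoux--Sch\"utzenberger keys) rather than by the train argument. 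Even granting that step, two further points need justification: (i) that a simple reflection $s_i$ with $s_iy<y$ and $s_iy$ again the longest element of its coset always exists, so the induction can proceed; and (ii) the precise form of the atom recursion $\mathcal{B}^\circ_\lambda(y)=\mathcal{D}^\circ_i\,\mathcal{B}^\circ_\lambda(s_iy)$ together with its interaction with the $w_0$-twist built into the Lascoux--Sch\"utzenberger involution (in this paper's conventions $v\in\mathcal{B}^\circ_\lambda(w_0\sigma(v))$), since the orientation conventions are exactly where side and sign errors creep in.
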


The main result of this paper is to establish an analogue for closed states. Specifically, we show that there is a bijective correspondence between Demazure crystals and the sets of closed states in the colored five-vertex models:

\begin{thm}\label{mainthm}
For $y\in W$, let $\mathfrak{S}^\bullet_{\lambda}(y)$ (by abuse of notation) be the set of closed states for the model $\mathfrak{S}^\bullet_{\lambda}(y)$. If $y\in W$ is the longest element of the coset in $W/W_\lambda$ then the map $\mathfrak{s}\mapsto \mathfrak{T}(\mathfrak{s})'$ is a bijective map $\mathfrak{S}^\bullet_{\lambda}(y)\rightarrow \mathcal{B}_\lambda(y)$.
\end{thm}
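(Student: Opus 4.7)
The plan is to lift the character identity \eqref{eq:4} to a set-theoretic bijective decomposition of closed states into open states, and then apply \cref{thm1:BBBG3} atomwise. The key intermediate goal is a canonical bijection
\[
\mathfrak{S}^\bullet_\lambda(w) \;\xrightarrow{\;\sim\;}\; \bigsqcup_{y\leq w} \mathfrak{S}^\circ_\lambda(y),
\]
where the union ranges over longest coset representatives $y\in W/W_\lambda$ with $y\leq w$ and which is compatible with the tableau map in the sense that $\mathfrak{T}(\mathfrak{s})$ agrees on the two sides. Once this decomposition is available, \cref{thm1:BBBG3} applied to each open piece together with the crystal identity \eqref{eq:6} immediately produces the desired bijection $\mathfrak{s}\mapsto\mathfrak{T}(\mathfrak{s})'$ from $\mathfrak{S}^\bullet_\lambda(w)$ onto $\mathcal{B}_\lambda(w)$.

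\textbf{Reading a permutation from a closed state.} The closed and open five-vertex models share the same local vertex configurations and differ only in their boundary data: in $\mathfrak{S}^\circ_\lambda(y)$ the colored strands enter and exit the lattice in an order specified by $y$, while in $\mathfrak{S}^\bullet_\lambda(w)$ some boundary edges loop back, imposing only that the ordering at the genuinely open edges be compatible with $w$. Given $\mathfrak{s}\in\mathfrak{S}^\bullet_\lambda(w)$, I read off the induced permutation $y(\mathfrak{s})$ from the colored strands visible on its open boundary and regard $\mathfrak{s}$ as an open state in $\mathfrak{S}^\circ_\lambda(y(\mathfrak{s}))$. The inverse candidate takes an open state for some $y\leq w$ and extends it to a closed state by completing the closed portion of the boundary in the (essentially unique) admissible way.

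\textbf{Main obstacle.} The principal difficulty is verifying that this correspondence is well-defined and bijective onto $\bigsqcup_{y\leq w}\mathfrak{S}^\circ_\lambda(y)$: one must show that $y(\mathfrak{s})$ is always a longest coset representative bounded by $w$ in the Bruhat order, and that the extension is unique for each open state. My plan is to proceed by induction on $\ell(w)$, mirroring the Yang--Baxter and train arguments of \cite{buciumas2022quasi} and \cite{BBBG} that established \eqref{eq:2} and \eqref{eq:4}. The base case $w=e$ is straightforward, since $\mathfrak{S}^\bullet_\lambda(e)$ reduces to the single state corresponding to the highest-weight tableau. For the inductive step, I will analyze how the boundary conditions change when $w$ is replaced by $s_iw$ with $\ell(s_iw)>\ell(w)$, and show that the additional closed states correspond bijectively to the new open atoms indexed by $y\leq s_iw$ with $y\nleq w$. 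A complementary check, which I expect to be essential, is to use the state crystal structure promised in the abstract to verify that $\mathfrak{T}$ intertwines the state crystal operators with those on $\mathcal{B}_\lambda$; the recursive characterization $\mathcal{B}_\lambda(s_iw)=\{\tilde f_i^k b:k\geq 0,\,b\in\mathcal{B}_\lambda(w)\}\setminus\{0\}$ (valid when $\ell(s_iw)>\ell(w)$) then transfers directly to the state side, yielding the inductive step cleanly.
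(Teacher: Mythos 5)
Your high-level architecture is legitimate in principle: the paper does establish the state-level decomposition $\mathfrak{S}^\bullet_\lambda(w)\cong\bigcup_{y\leq w}\mathfrak{S}^\circ_\lambda(y)$ compatibly with the tableau map (this is \cref{eq:7}, coming out of \cref{thm:state}), and combining it with \cref{thm1:BBBG3} and \cref{eq:6} would indeed yield \cref{mainthm}. (The paper runs the logic in the opposite direction --- it proves \cref{mainthm} independently via crystal operators and then \emph{reproves} \cref{thm1:BBBG3} --- but your direction is also valid once the decomposition is in hand.) The problem is that your proposed mechanism for producing the decomposition rests on a misreading of the models. The open and closed models have \emph{identical} boundary conditions: every boundary edge of $\mathfrak{S}^\bullet_\lambda(w)$ is prescribed exactly as in $\mathfrak{S}^\circ_\lambda(w)$, and there is no ``closed portion of the boundary'' that loops back or that you could ``complete in the essentially unique admissible way.'' The two models differ only in the admissible local vertex configurations ($a_{2,3}$ is allowed in closed states, $a_{2,4}$ in open states), so a closed state is generally \emph{not} an admissible open state for any flag, and no permutation $y(\mathfrak{s})$ can be read off a boundary. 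The actual correspondence goes through the Gelfand--Tsetlin pattern: a closed state $\mathfrak{s}$ determines $A=\mathrm{GTP}(\mathfrak{s})$, which by \cref{thm:BBBG1} determines a unique $w_A$ and open state $\mathfrak{s}_A$, and the substance of \cref{sec:state} is the theorem that $\mathfrak{S}^\bullet_\lambda(y;A)$ is a singleton when $y\geq w_A$ and empty otherwise. Proving that requires the ``closed adjustment'' operation that slides a crossing of two colored paths to their last intersection, the identification of crossings with inversions of $w_0y$, and a Bruhat-order induction --- none of which your proposal supplies or replaces.

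Two further points. First, the Yang--Baxter/train arguments of \cite{buciumas2022quasi} and \cite{BBBG} that you plan to mirror only control partition functions; they do not produce bijections of state sets, so they cannot drive your inductive step. Second, the item you describe as a ``complementary check'' --- that the state sets interact correctly with the crystal operators $e_i$ via the recursion $\mathcal{B}_\lambda(s_iw)=\{x: e_i^kx\in\mathcal{B}_\lambda(w)\text{ for some }k\geq 0\}$ --- is in fact the entire content of the paper's \cref{sec:main} (the two cases: realizing $e_i(\theta(\mathfrak{s}))$ by another closed state of $\mathfrak{S}^\bullet_\lambda(s_iy)$ when it is nonzero, and recoloring $\mathfrak{s}$ into $\mathfrak{S}^\bullet_\lambda(y)$ when it is zero), and it requires a delicate local analysis of how $e_i$ moves a colored vertical edge and when the two affected paths admit a recoloring (\cref{crystaloperator}, \cref{prop:case1}, \cref{prop:rho}). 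As written, your proposal identifies the right targets but leaves both of the load-bearing arguments unproved.
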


The proof of this theorem does not depend on the methods in \cite{BBBG}, and actually provides an alternate proof of \cref{thm1:BBBG3}. These results lift \cref{eq:4} to an equivalence at the level of states, namely,
\[\mathfrak{S}^\bullet_{\lambda}(w)\cong\bigcup_{y\leq w} \mathfrak{S}_{\lambda}^\circ(y).\]

\subsection{Strategy of proofs.} The proof depends on analyzing the closed states of lattice models whose boundary conditions are parametrized by elements of the Weyl group, and combine it with the effect of crystal operations. We will inspect two successive “adjustments” that convert between open and closed states. In particular, whenever $y\leq w$ in the Bruhat order on $S_r$, there is a composite map
\begin{equation}\mathfrak{S}_{\lambda}^\circ(y)\xrightarrow{\text{adjustment 1}}\mathfrak{S}^\bullet_{\lambda}(y)\xrightarrow{\text{adjustment 2}}\mathfrak{S}_{\lambda}^\bullet(w),\end{equation} 
where ``adjustment 1" (to be called {\em closed adjustments} in \cref{sec:state}) transforms an open state into a closed state, and ``adjustment 2" (see \cref{prop:y>w}) alters the boundary conditions to produce closed states of $\mathfrak{S}_{\lambda}^\bullet(w)$ from closed states of $\mathfrak{S}^\bullet_{\lambda}(y)$.

For example, let $y=(123)$ and $w=(13)$. Here are illustrations on how adjustments are applied to transform an open state of $\mathfrak{S}^\circ_{\lambda}(y)$ to a closed states of $\mathfrak{S}^\bullet_{\lambda}(w)$.

\begin{figure}[h!]
\centering
    \begin{tikzpicture}[scale=0.5,every node/.style={scale=.4}]
\foreach \i in {1,3,5}
	\draw[thick] (0,\i) -- (12,\i);
\foreach \j in {1,3,5,7,9,11}
	\draw[thick] (\j,0) to (\j,6);

\foreach \i in {0,2,4,6}
	\foreach \j in {1,3,5,7,9,11}
	  \draw[thick, fill=white] (\j,\i) circle (.25);
\foreach \i in {1,3,5}
	\foreach \j in {0,2,4,6,8,10,12}
	  \draw[thick, fill=white] (\j,\i) circle (.25);

      \draw[line width=.5mm,red] (1,6) to (1,5.5) to [out=-90,in=180] (1.5,5) to 
	(7.5,5) to [out=0,in=90] (9,4.5) to (9,3.5) to [out=-90,in=180] (9.5,3) to (12,3);

    \draw[line width=.5mm,blue] (5,6) to (5,3.5) to [out=-90,in=180] (6.5,3) to (8.5,3) to [out=0,in=90] (9,2.5) to (9,1.5) to [out=-90,in=180] (9.5,1) to (12,1);

    \draw[line width=.5mm,green!80!black] (11,6) to (11,5.5) to [out=-90,in=180] (11.5,5) to (12,5);

	\foreach \i/\j in {1/6,2/5,4/5,6/5,8/5,9/4,10/3,12/3}
		 \draw[line width=.5mm,red,fill=white] (\i,\j) circle (.25);

	\foreach \i/\j in {5/6,5/4,6/3,8/3,9/2,10/1,12/1}
		 \draw[line width=.5mm,blue,fill=white] (\i,\j) circle (.25);

         \foreach \i/\j in {11/6,12/5}
		 \draw[line width=.5mm,green,fill=white] (\i,\j) circle (.25);

\foreach \i/\j in {1/1,3/1,5/1,7/1,9/1,11/1}
\node at (\i,\j) [circle,fill,inner sep=2.5pt]{};
\foreach \i/\j in {1/1,3/1,5/1,7/1,9/1,11/1}
\node at (\i,3) [circle,fill,inner sep=2.5pt]{};
\foreach \i/\j in {1/1,3/1,5/1,7/1,9/1,11/1}
\node at (\i,5) [circle,fill,inner sep=2.5pt]{};
\foreach \i/\j in {1/1,3/1,5/1,7/1,9/1,11/1}
\node at (\i,0) {{\bf +}};
\foreach \i/\j in {3/1,7/1,9/1}
\node at (\i,6) {{\bf +}};
\foreach \i/\j in {0/1,0/3,0/5}
\node at (0,\j) {{\bf +}};
	\foreach \j/\c in {1/5,3/4,5/3,7/2,9/1,11/0}
	\node at (\j,6.5) {$\c$};
	\foreach \i/\c in {1/3,3/2,5/1}
	\node at (-.5,\i) {$\c$};

\end{tikzpicture}
\end{figure}
\[\llap{adjustment 1}{\Big\downarrow}\]
\begin{figure}[h!]
\centering
\begin{tikzpicture}[scale=.5,every node/.style={scale=.4}]

\foreach \i in {1,3,5}
	\draw[thick] (0,\i) -- (12,\i);
\foreach \j in {1,3,5,7,9,11}
	\draw[thick] (\j,0) to (\j,6);

\foreach \i in {0,2,4,6}
	\foreach \j in {1,3,5,7,9,11}
	  \draw[thick, fill=white] (\j,\i) circle (.25);
\foreach \i in {1,3,5}
	\foreach \j in {0,2,4,6,8,10,12}
	  \draw[thick, fill=white] (\j,\i) circle (.25);

      \draw[line width=.5mm,red] (1,6) to (1,5.5) to [out=-90,in=180] (1.5,5) to 
	(4.5,5) to [out=0,in=90] (5,4.5) to (5,3.5) to [out=-90,in=180] (5.5,3) to (12,3);

    \draw[line width=.5mm,blue] (5,6) to (5,5.5) to [out=-90,in=180] (5.5,5) to (8.5,5) to  [out=0,in=90] (9,4.5) to (9,1.5) to [out=-90,in=180] (9.5,1) to (12,1);

    \draw[line width=.5mm,green!80!black] (11,6) to (11,5.5) to [out=-90,in=180] (11.5,5) to (12,5);

	\foreach \i/\j in {1/6,2/5,4/5,5/4,6/3,8/3,10/3,12/3}
		 \draw[line width=.5mm,red,fill=white] (\i,\j) circle (.25);

	\foreach \i/\j in {5/6,6/5,8/5,9/4,9/2,10/1,12/1}
		 \draw[line width=.5mm,blue,fill=white] (\i,\j) circle (.25);

         \foreach \i/\j in {11/6,12/5}
		 \draw[line width=.5mm,green,fill=white] (\i,\j) circle (.25);

\foreach \i/\j in {1/1,3/1,5/1,7/1,9/1,11/1}
\node at (\i,\j) [circle,fill,inner sep=2.5pt]{};
\foreach \i/\j in {1/1,3/1,5/1,7/1,9/1,11/1}
\node at (\i,3) [circle,fill,inner sep=2.5pt]{};
\foreach \i/\j in {1/1,3/1,5/1,7/1,9/1,11/1}
\node at (\i,5) [circle,fill,inner sep=2.5pt]{};
\foreach \i/\j in {1/1,3/1,5/1,7/1,9/1,11/1}
\node at (\i,0) {{\bf +}};
\foreach \i/\j in {3/1,7/1,9/1}
\node at (\i,6) {{\bf +}};
\foreach \i/\j in {0/1,0/3,0/5}
\node at (0,\j) {{\bf +}};
	\foreach \j/\c in {1/5,3/4,5/3,7/2,9/1,11/0}
	\node at (\j,6.5) {$\c$};
	\foreach \i/\c in {1/3,3/2,5/1}
	\node at (-.5,\i) {$\c$};

\end{tikzpicture}
\end{figure}
\newpage 

\[\llap{adjustment 2}{\Big\downarrow}\]
\begin{figure}[h!]
\centering
\begin{tikzpicture}[scale=.5,every node/.style={scale=.4}]

\foreach \i in {1,3,5}
	\draw[thick] (0,\i) -- (12,\i);
\foreach \j in {1,3,5,7,9,11}
	\draw[thick] (\j,0) to (\j,6);

\foreach \i in {0,2,4,6}
	\foreach \j in {1,3,5,7,9,11}
	  \draw[thick, fill=white] (\j,\i) circle (.25);
\foreach \i in {1,3,5}
	\foreach \j in {0,2,4,6,8,10,12}
	  \draw[thick, fill=white] (\j,\i) circle (.25);

      \draw[line width=.5mm,red] (1,6) to (1,5.5) to [out=-90,in=180] (1.5,5) to 
	(4.5,5) to [out=0,in=90] (5,4.5) to (5,3.5) to [out=-90,in=180] (5.5,3) to (8.5,3) to  [out=0,in=90] (9,2.5) to (9,1.5) to [out=-90,in=180] (9.5,1) to (12,1); 

    \draw[line width=.5mm,blue] (5,6) to (5,5.5) to [out=-90,in=180] (5.5,5) to (8.5,5) to  [out=0,in=90] (9,4.5) to  (9,3.5) to [out=-90,in=180] (9.5,3) to (12,3);

    \draw[line width=.5mm,green!80!black] (11,6) to (11,5.5) to [out=-90,in=180] (11.5,5) to (12,5);

	\foreach \i/\j in {1/6,2/5,4/5,5/4,6/3,8/3,9/2,10/1,12/1}
		 \draw[line width=.5mm,red,fill=white] (\i,\j) circle (.25);

	\foreach \i/\j in {5/6,6/5,8/5,9/4,10/3,12/3}
		 \draw[line width=.5mm,blue,fill=white] (\i,\j) circle (.25);

         \foreach \i/\j in {11/6,12/5}
		 \draw[line width=.5mm,green,fill=white] (\i,\j) circle (.25);

\foreach \i/\j in {1/1,3/1,5/1,7/1,9/1,11/1}
\node at (\i,\j) [circle,fill,inner sep=2.5pt]{};
\foreach \i/\j in {1/1,3/1,5/1,7/1,9/1,11/1}
\node at (\i,3) [circle,fill,inner sep=2.5pt]{};
\foreach \i/\j in {1/1,3/1,5/1,7/1,9/1,11/1}
\node at (\i,5) [circle,fill,inner sep=2.5pt]{};
\foreach \i/\j in {1/1,3/1,5/1,7/1,9/1,11/1}
\node at (\i,0) {{\bf +}};
\foreach \i/\j in {3/1,7/1,9/1}
\node at (\i,6) {{\bf +}};
\foreach \i/\j in {0/1,0/3,0/5}
\node at (0,\j) {{\bf +}};
	\foreach \j/\c in {1/5,3/4,5/3,7/2,9/1,11/0}
	\node at (\j,6.5) {$\c$};
	\foreach \i/\c in {1/3,3/2,5/1}
	\node at (-.5,\i) {$\c$};

\end{tikzpicture}

\end{figure}

Based on these adjustments, we then establish criteria (\cref{prop:wstate}, \cref{prop:y<w}, \cref{prop:y>w}) for the existence and uniqueness of closed states in terms of the Bruhat order on the Weyl group, leading to a bijection
\begin{equation}\label{eq:7}
   \mathfrak{S}^\bullet_{\lambda}(w)\cong\bigcup_{y\leq w} \mathfrak{S}_{\lambda}^\circ(y). 
\end{equation} Combining with comparison maps 
\[\mathfrak{S}_{\lambda}^\circ(w) \rightarrow \mathcal{B}_\lambda\] and
\[\theta:\mathfrak{S}_{\lambda}^\bullet(w) \rightarrow \mathcal{B}_\lambda,\] which associate a semistandard Young tableau in $\mathcal{B}_\lambda$ to each state, we obtain a commutative diagram:
\[
\begin{tikzcd}[row sep=large, column sep=huge]
  \mathfrak{S}_{\lambda}^\circ(y) \arrow[r, "\text{adjustment}"] \arrow[dr] & \mathfrak{S}_{\lambda}^\bullet(w) \arrow[d] \\
  & \mathcal{B}_\lambda
\end{tikzcd}
\]

Next, we use an inductive description of the Demazure crystals: If $s_iw>w$ then 
    \[
    \mathcal{B}_\lambda(s_i w) = \{ x \in \mathcal{B}_\lambda \mid e_i^k x \in \mathcal{B}(w) \text{ for some } k \geq 0\}. 
    \]  We study how the crystal operators $e_i$ act on closed states, and inductively show the image $\theta(\mathfrak{S}_\lambda^\bullet(w))$  agrees with Demazure crystal $\mathcal{B}_\lambda(w)$, which is the main theorem \cref{mainthm}. Specifically, if $s_iw>w$ and $\mathfrak{s}\in \mathfrak{S}_\lambda^\bullet(s_iw)$, we will prove the following two facts which establish the crystal structure of closed states:

\textbf{Case 1.} If $e_i(\theta(\mathfrak{s})) \neq 0$ then there exists $\mathfrak{t}\in \mathfrak{S}_{\lambda}^\bullet(s_iw)$ such that $\theta(\mathfrak{t})=e_i(\theta(\mathfrak{s})).$

\textbf{Case 2.} If $e_i(\theta(\mathfrak{s})) = 0$ then there exists a recoloring of the state $\mathfrak{s}$ into $\mathfrak{S}_\lambda^\bullet(w)$, i.e., there exists $\mathfrak{t}\in \mathfrak{S}_{\lambda}^\bullet(w)$ such that $\theta(\mathfrak{s})=\theta(\mathfrak{t}).$
    
    Finally, comparing the bijection (\ref{eq:7}) with the structure of Demazure crystals (\ref{eq:4}) reproves the result of \cref{thm:BBBG1}.

The organization of the paper is as follows: In Section 2 below, we define the colored five-vertex model and related admissible states. We compare open and closed states in Section 3, proving a criterion for the existence and uniqueness of closed states. In Section 4, we collect the background and results on crystals and Demazure operators. In Section 5, we analyze the crystal operators and give the proof of \cref{mainthm}.
\subsection{Acknowledgements}
The author would like to thank Daniel Bump for his guidance throughout the project.
\section{Colored lattice models}\label{sec:model}

\subsection{\texorpdfstring{Closed colored models}{}}   \label{subsec:closedmodel} \hfill

We begin by describing the {\em closed colored five-vertex models} $\mathfrak{S}^\bullet_{\lambda}(w)$, which depend on a partition $\lambda$ of length $r$ and a permutation $w \in S_r$. Let $r \geq 1$ and consider a partition $\lambda = (\lambda_1, \lambda_2, \dots, \lambda_r)$. Define the partition $\rho = (r-1, r-2, \dots, 1, 0)$, so that $\lambda + \rho = (\lambda_1 + r - 1, \lambda_2 + r - 2, \dots, \lambda_{r-1} + 1, \lambda_r)$. The colored five-vertex model is constructed on a planar, rectangular $r \times N$ grid of vertices and edges, where $N \geq \lambda_1 + r$. In all cases considered below, we assume that $N = \lambda_1 + r$, as any additional columns beyond this bound do not affect the model.

The columns in the grid are labeled $0$ to $N-1$ from right to left, and the rows are labeled $1$ to $r$ from top to bottom. Thus, the $r\times N$ grid consists of vertices labeled by coordinates $(i,j)$ with $1\leq i \leq r$ and $0\leq j\leq N-1$. Each vertex is connected to four edges in the up/down and left/right directions. An edge that connects to only one vertex is called a {\em boundary edge}, while an edge that connects two vertices is called an {\em interior edge}.

Every edge $e$ is assigned a set $\Sigma_e$ of possible {\em spins}. In our model, all edges are assigned the same spinset $\Sigma=\{+\}\cup \{c_1,...,c_r\}$ where the elements $c_i$ represent {\em colors} ordered by $c_1>c_{2}>...>c_r.$ If an edge is assigned the spin $c_i$, we say the edge has color $c_i$; if it is assigned the spin $+$, we say the edge is \emph{uncolored}.

Let the vector ${\bf c}=(c_1,...,c_r).$ Given $w\in S_r$, let $w{\bf c}$ be the permuted vector of colors, called {\em boundary flag}, given by $w{\bf c}=(c_{w^{-1}(i)}).$ As part of the data of the model, the spins of the boundary edges are prescribed: Put colors $c_1,...,c_r$ on the top boundary at columns $\lambda_1+r-1,\lambda_2+r-2,\ldots,\lambda_r$. On the right boundary, put colors in the order of $w{\bf c}$ from top to bottom. In other words, the color $c_i$ is on the row $w(i).$ Then we put ``$+$" at all remaining boundary edges.

\begin{example}\label{modelex}
   {\em  Below is a visualization of the colored five-vertex $\mathfrak{S}_{\lambda}^\bullet(w)$
model with $\lambda=(3,2,0)$ and $w=(1\,2\,3)$. Since $\lambda+\rho=(5,3,0)$, we have three rows and six columns. Note that every vertex is a black dot and at the center of every edge we draw a circle. 

For the boundary conditions, we assume the colors are red, blue, and green, and they are ordered as 
\[\text{red ($c_1$) $>$ blue ($c_2$) $>$ green ($c_3$)}.\]
So if $w=(1\,2\,3)$ then the colors on the right edge are $(c_3,c_1,c_2)$. We color the boundary edges accordingly, and put a $+$ on the other boundary edges. }
\end{example}
\begin{figure}[h!]
\begin{centering}
\begin{tikzpicture}[scale=.95,every node/.style={scale=.7}]

\foreach \i in {1,3,5}
	\draw[thick] (0,\i) -- (12,\i);
\foreach \j in {1,3,5,7,9,11}
	\draw[thick] (\j,0) to (\j,6);

\foreach \i in {0,2,4,6}
	\foreach \j in {1,3,5,7,9,11}
	  \draw[thick, fill=white] (\j,\i) circle (.25);
\foreach \i in {1,3,5}
	\foreach \j in {0,2,4,6,8,10,12}
	  \draw[thick, fill=white] (\j,\i) circle (.25);

	\foreach \i/\j in {1/6,12/3}
		 \draw[line width=.5mm,red,fill=white] (\i,\j) circle (.25);

	\foreach \i/\j in {5/6,12/1}
		 \draw[line width=.5mm,blue,fill=white] (\i,\j) circle (.25);

         \foreach \i/\j in {11/6,12/5}
		 \draw[line width=.5mm,green,fill=white] (\i,\j) circle (.25);

\foreach \i/\j in {1/1,3/1,5/1,7/1,9/1,11/1}
\node at (\i,\j) [circle,fill,inner sep=2.5pt]{};
\foreach \i/\j in {1/1,3/1,5/1,7/1,9/1,11/1}
\node at (\i,3) [circle,fill,inner sep=2.5pt]{};
\foreach \i/\j in {1/1,3/1,5/1,7/1,9/1,11/1}
\node at (\i,5) [circle,fill,inner sep=2.5pt]{};
\foreach \i/\j in {1/1,3/1,5/1,7/1,9/1,11/1}
\node at (\i,0) {{\bf +}};
\foreach \i/\j in {3/1,7/1,9/1}
\node at (\i,6) {{\bf +}};
\foreach \i/\j in {0/1,0/3,0/5}
\node at (0,\j) {{\bf +}};
	\foreach \j/\c in {1/5,3/4,5/3,7/2,9/1,11/0}
	\node at (\j,6.5) {$\c$};
	\foreach \i/\c in {1/3,3/2,5/1}
	\node at (-.5,\i) {$\c$};

\end{tikzpicture}
\caption{Boundary conditions for $\mathfrak{S}^\bullet_{(3,2,0)}(1\,2\,3)$.}
\end{centering}
\end{figure}
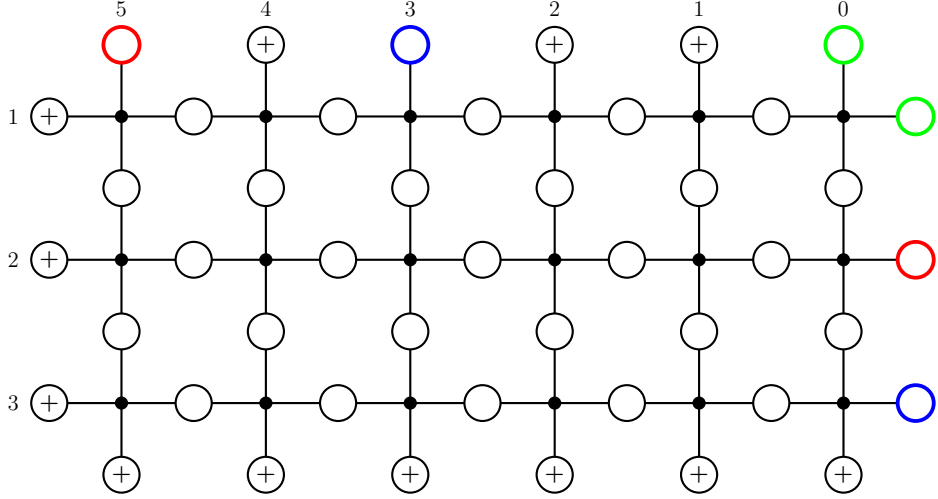

\vspace{2mm}

Given a lattice model, a {\em state} $\mathfrak{s}$ of the model is an assignment of an element of its spinset to every (interior) edge of the model. There are local constraints at each vertex on the possible configurations
of spins adjacent to it. A state in which these constraints are
satisfied at every vertex is called {\em admissible}. 

Given any admissible local configuration near the vertex, there is an assigned value $\beta_v$, called the {\em local Boltzmann weight} at each vertex $v$. The table below list and label all admissible configurations with the assumption that ``red" $>$  ``blue" in the order of colors.  The last row are the local Boltzmann weights, where the indeterminates $z_i$'s corresponds to vertices in the $i$-th row of the model.  

\begin{table}[h]

\begin{center}

\begin{tabular}{ |c|c|c|c|c|c| } 
\hline
$a_1$ & $a_{2,1}$ & $a_{2,3}$ & $b_2$ & $c_1$ & $c_2$ \\ 
 \hline
 \begin{tikzpicture}[scale=.8,every node/.style={scale=.5}]

\foreach \i in {1}
	\draw[thick] (0,\i) -- (2,\i);
\foreach \j in {1}
	\draw[thick] (\j,0) to (\j,2);
    \foreach \i/\j in {1/0,0/1,1/2,2/1}
  \draw[thick, fill=white] (\j,\i) circle (.25);
  \foreach \i/\j in {0/1,2/1}
\node at (\i,1) {{\bf +}};
  \foreach \i/\j in {1/2,1/0}
\node at (1,\j) {{\bf +}};
 \end{tikzpicture}  & \begin{tikzpicture}[scale=.8,every node/.style={scale=.5}]

\foreach \i in {1}
	\draw[thick] (0,\i) -- (2,\i);
\foreach \j in {1}
	\draw[thick] (\j,0) to (\j,2);
    
   \draw[line width=.5mm,red] (0,1) to (2,1);
   \draw[line width=.5mm,blue] (1,0) to (1,2);
   \foreach \i/\j in {1/0,0/1,1/2,2/1}
  \draw[thick, fill=white] (\j,\i) circle (.25);

 \end{tikzpicture}  & \begin{tikzpicture}[scale=.8,every node/.style={scale=.5}]

\foreach \i in {1}
	\draw[thick] (0,\i) -- (2,\i);
\foreach \j in {1}
	\draw[thick] (\j,0) to (\j,2);
    
   \draw[line width=.5mm,blue] (1,2) to (1,1.5) to [out=-90,in=180] (1.5,1) to (2,1);
 \draw[line width=.5mm,red] (0,1) to (0.5,1) to [out=0,in=90] (1,0.5) to (1,0);
   \foreach \i/\j in {1/0,0/1,1/2,2/1}
  \draw[thick, fill=white] (\j,\i) circle (.25);

 \end{tikzpicture} &   \begin{tikzpicture}[scale=.8,every node/.style={scale=.5}]

\foreach \i in {1}
	\draw[thick] (0,\i) -- (2,\i);
\foreach \j in {1}
	\draw[thick] (\j,0) to (\j,2);
    
   \draw[line width=.5mm,red] (0,1) to (2,1);

   \foreach \i/\j in {1/0,0/1,1/2,2/1}
  \draw[thick, fill=white] (\j,\i) circle (.25);
  \foreach \i/\j in {1/2,1/0}
\node at (1,\j) {{\bf +}};
 \end{tikzpicture}  &  \begin{tikzpicture}[scale=.8,every node/.style={scale=.5}]

\foreach \i in {1}
	\draw[thick] (0,\i) -- (2,\i);
\foreach \j in {1}
	\draw[thick] (\j,0) to (\j,2);
    
   \draw[line width=.5mm,red] (0,1) to (0.5,1) to [out=0,in=90] (1,0.5) to (1,0);

   \foreach \i/\j in {1/0,0/1,1/2,2/1}
  \draw[thick, fill=white] (\j,\i) circle (.25);
  \foreach \i/\j in {1/2,2/1}
\node at (\i,\j) {{\bf +}};
 \end{tikzpicture} & \begin{tikzpicture}[scale=.8,every node/.style={scale=.5}]

\foreach \i in {1}
	\draw[thick] (0,\i) -- (2,\i);
\foreach \j in {1}
	\draw[thick] (\j,0) to (\j,2);
    
   \draw[line width=.5mm,red] (1,2) to (1,1.5) to [out=-90,in=180] (1.5,1) to (2,1);

   \foreach \i/\j in {1/0,0/1,1/2,2/1}
  \draw[thick, fill=white] (\j,\i) circle (.25);
  \foreach \i/\j in {0/1,1/0}
\node at (\i,\j) {{\bf +}};
 \end{tikzpicture}\\ 

 \hline
 $1$ & $z_i$ & $z_i$ & $z_i$ & $z_i$ & $1$ \\ 
 \hline

\end{tabular}

\vspace{2mm}
\caption{admissible local configurations and Boltzmann weights in closed models, with red $>$ blue.}
\label{table2}
\end{center}
\end{table}

   The {\em Boltzmann weight} of a state $\mathfrak{s}$, is the product of local Boltzmann weights over all vertices:
\[\beta(\mathfrak{s})=\prod_{v} \beta_v(\mathfrak{s}).\]

\begin{defn}\label{defmodel} For a partition $\lambda$ of length $r$ and $w\in S_r$, the {\bf closed colored five-vertex model} $\mathfrak{S}^\bullet_{\lambda}(w)$ is the $r\times N$ rectangular model with the boundary conditions, local constraints and Boltzmann weights at each vertex described above.    

By abuse of notation, we will denote the set of admissible states of the model also by $\mathfrak{S}^\bullet_{\lambda}(w)$. We will often call the states of the closed model {\bf closed states}.
\end{defn}

The {\em partition function} $Z(\mathfrak{S}_{\lambda}^\bullet(w))$ is defined to be the sum of the Boltzmann weights over all admissible states. 
\[Z(\mathfrak{S}^\bullet_{\lambda,w})=\sum_{\mathfrak{s}\in \mathfrak{S}_{\lambda}^\bullet(w)}\beta(\mathfrak{s}).\]
We will see that the partition function encodes representation-theoretic information.

\begin{example}\label{closedstateex}
    {\em  Below in \cref{closedstatefig} is a closed state for the model in \cref{modelex}. At vertex $(1,3),$ it is an $a_{2,3}$ pattern. At vertex $(2,1)$, it is an $a_{2,1}$ pattern.}
\end{example}

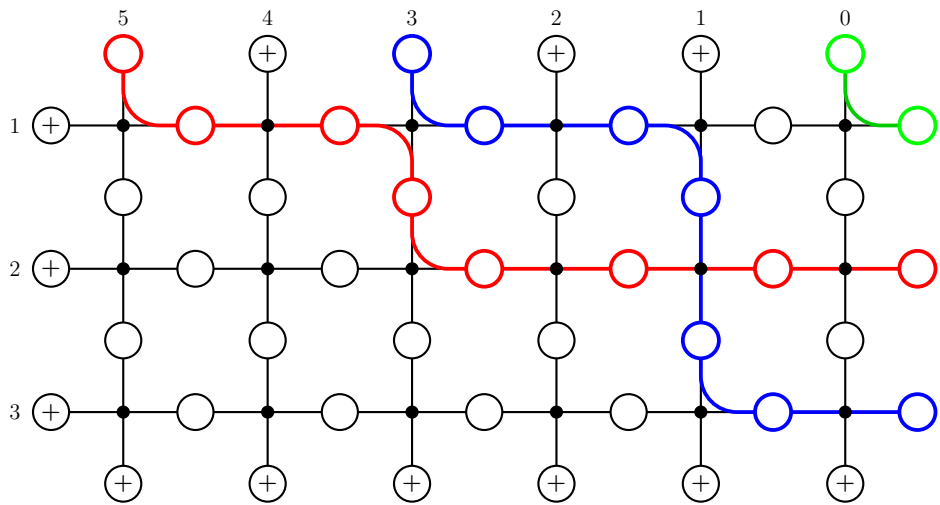
\begin{figure}[h!]
\centering
\begin{tikzpicture}[scale=.95,every node/.style={scale=.7}]

\foreach \i in {1,3,5}
	\draw[thick] (0,\i) -- (12,\i);
\foreach \j in {1,3,5,7,9,11}
	\draw[thick] (\j,0) to (\j,6);

\foreach \i in {0,2,4,6}
	\foreach \j in {1,3,5,7,9,11}
	  \draw[thick, fill=white] (\j,\i) circle (.25);
\foreach \i in {1,3,5}
	\foreach \j in {0,2,4,6,8,10,12}
	  \draw[thick, fill=white] (\j,\i) circle (.25);

      \draw[line width=.5mm,red] (1,6) to (1,5.5) to [out=-90,in=180] (1.5,5) to 
	(4.5,5) to [out=0,in=90] (5,4.5) to (5,3.5) to [out=-90,in=180] (5.5,3) to (12,3);

    \draw[line width=.5mm,blue] (5,6) to (5,5.5) to [out=-90,in=180] (5.5,5) to (8.5,5) to  [out=0,in=90] (9,4.5) to (9,1.5) to [out=-90,in=180] (9.5,1) to (12,1);

    \draw[line width=.5mm,green!80!black] (11,6) to (11,5.5) to [out=-90,in=180] (11.5,5) to (12,5);

	\foreach \i/\j in {1/6,2/5,4/5,5/4,6/3,8/3,10/3,12/3}
		 \draw[line width=.5mm,red,fill=white] (\i,\j) circle (.25);

	\foreach \i/\j in {5/6,6/5,8/5,9/4,9/2,10/1,12/1}
		 \draw[line width=.5mm,blue,fill=white] (\i,\j) circle (.25);

         \foreach \i/\j in {11/6,12/5}
		 \draw[line width=.5mm,green,fill=white] (\i,\j) circle (.25);

\foreach \i/\j in {1/1,3/1,5/1,7/1,9/1,11/1}
\node at (\i,\j) [circle,fill,inner sep=2.5pt]{};
\foreach \i/\j in {1/1,3/1,5/1,7/1,9/1,11/1}
\node at (\i,3) [circle,fill,inner sep=2.5pt]{};
\foreach \i/\j in {1/1,3/1,5/1,7/1,9/1,11/1}
\node at (\i,5) [circle,fill,inner sep=2.5pt]{};
\foreach \i/\j in {1/1,3/1,5/1,7/1,9/1,11/1}
\node at (\i,0) {{\bf +}};
\foreach \i/\j in {3/1,7/1,9/1}
\node at (\i,6) {{\bf +}};
\foreach \i/\j in {0/1,0/3,0/5}
\node at (0,\j) {{\bf +}};
	\foreach \j/\c in {1/5,3/4,5/3,7/2,9/1,11/0}
	\node at (\j,6.5) {$\c$};
	\foreach \i/\c in {1/3,3/2,5/1}
	\node at (-.5,\i) {$\c$};

\end{tikzpicture}
\vspace{4mm}
\caption{A closed state for $\mathfrak{S}^\bullet_{(3,2,0)}(1\,2\,3)$  with  $\text{red} > \text{blue} > \text{green}.$}
\label{closedstatefig}
\end{figure}

Now we turn to the definition of open models. The open colored five-vertex model, denoted by $\mathfrak{S}_{\lambda}^\circ(w)$, has the same grid and boundary conditions, but the admissible states and Boltzmann weights are different. Below are the table listing the local admissible configurations and their local Boltzmann weights. 

\begin{table}[h]
\begin{center}
\begin{tabular}{ |c|c|c|c|c|c| } 
\hline
$a_1$ & $a_{2,1}$ & $a_{2,4}$ & $b_2$ & $c_1$ & $c_2$ \\ 
 \hline
 \begin{tikzpicture}[scale=.8,every node/.style={scale=.5}]

\foreach \i in {1}
	\draw[thick] (0,\i) -- (2,\i);
\foreach \j in {1}
	\draw[thick] (\j,0) to (\j,2);
    \foreach \i/\j in {1/0,0/1,1/2,2/1}
  \draw[thick, fill=white] (\j,\i) circle (.25);
  \foreach \i/\j in {0/1,2/1}
\node at (\i,1) {{\bf +}};
  \foreach \i/\j in {1/2,1/0}
\node at (1,\j) {{\bf +}};
 \end{tikzpicture}  & \begin{tikzpicture}[scale=.8,every node/.style={scale=.5}]

\foreach \i in {1}
	\draw[thick] (0,\i) -- (2,\i);
\foreach \j in {1}
	\draw[thick] (\j,0) to (\j,2);
    
   \draw[line width=.5mm,red] (0,1) to (2,1);
   \draw[line width=.5mm,blue] (1,0) to (1,2);
   \foreach \i/\j in {1/0,0/1,1/2,2/1}
  \draw[thick, fill=white] (\j,\i) circle (.25);

 \end{tikzpicture}  & \begin{tikzpicture}[scale=.8,every node/.style={scale=.5}]

\foreach \i in {1}
	\draw[thick] (0,\i) -- (2,\i);
\foreach \j in {1}
	\draw[thick] (\j,0) to (\j,2);
    
   \draw[line width=.5mm,red] (1,2) to (1,1.5) to [out=-90,in=180] (1.5,1) to (2,1);
 \draw[line width=.5mm,blue] (0,1) to (0.5,1) to [out=0,in=90] (1,0.5) to (1,0);
   \foreach \i/\j in {1/0,0/1,1/2,2/1}
  \draw[thick, fill=white] (\j,\i) circle (.25);

 \end{tikzpicture} &   \begin{tikzpicture}[scale=.8,every node/.style={scale=.5}]

\foreach \i in {1}
	\draw[thick] (0,\i) -- (2,\i);
\foreach \j in {1}
	\draw[thick] (\j,0) to (\j,2);
    
   \draw[line width=.5mm,red] (0,1) to (2,1);

   \foreach \i/\j in {1/0,0/1,1/2,2/1}
  \draw[thick, fill=white] (\j,\i) circle (.25);
  \foreach \i/\j in {1/2,1/0}
\node at (1,\j) {{\bf +}};
 \end{tikzpicture}  &  \begin{tikzpicture}[scale=.8,every node/.style={scale=.5}]

\foreach \i in {1}
	\draw[thick] (0,\i) -- (2,\i);
\foreach \j in {1}
	\draw[thick] (\j,0) to (\j,2);
    
   \draw[line width=.5mm,red] (0,1) to (0.5,1) to [out=0,in=90] (1,0.5) to (1,0);

   \foreach \i/\j in {1/0,0/1,1/2,2/1}
  \draw[thick, fill=white] (\j,\i) circle (.25);
  \foreach \i/\j in {1/2,2/1}
\node at (\i,\j) {{\bf +}};
 \end{tikzpicture} & \begin{tikzpicture}[scale=.8,every node/.style={scale=.5}]

\foreach \i in {1}
	\draw[thick] (0,\i) -- (2,\i);
\foreach \j in {1}
	\draw[thick] (\j,0) to (\j,2);
    
   \draw[line width=.5mm,red] (1,2) to (1,1.5) to [out=-90,in=180] (1.5,1) to (2,1);

   \foreach \i/\j in {1/0,0/1,1/2,2/1}
  \draw[thick, fill=white] (\j,\i) circle (.25);
  \foreach \i/\j in {0/1,1/0}
\node at (\i,\j) {{\bf +}};
 \end{tikzpicture}\\ 

 \hline
 $1$ & $z_i$ & $z_i$ & $z_i$ & $z_i$ & $1$ \\ 
 \hline

\end{tabular}
\vspace{2mm}
\caption{admissible local configurations and Boltzmann weights in open models, with red $>$ blue.}
\label{table3}
\end{center}
\end{table}
\begin{defn}\label{defopenmodel} For a partition $\lambda$ of length $r$ and $w\in S_r$, the {\bf open colored five-vertex model} $\mathfrak{S}^\circ_{\lambda}(w)$ is the $r\times N$ rectangular model with the boundary conditions, local constraints and Boltzmann weights at each vertex described above.    

By abuse of notation, we will denote the set of admissible states of the model also by $\mathfrak{S}^\circ_{\lambda}(w)$. We will often call the states of the open model {\bf open states}.
\end{defn}

\begin{example}\label{openclosed}
   {\em  Below in \cref{openstatefig} is an open state for the model $\mathfrak{S}^\circ_{\lambda}(w)$ with the same $\lambda$ and $w$ as in \cref{modelex}. At vertex $(1,3),$ it is an $a_{2,1}$ pattern. At vertex $(2,1)$, it is an $a_{2,4}$ pattern.}
\end{example}

\begin{figure}[h!]
\centering
\begin{tikzpicture}[scale=.95,every node/.style={scale=.7}]
\foreach \i in {1,3,5}
	\draw[thick] (0,\i) -- (12,\i);
\foreach \j in {1,3,5,7,9,11}
	\draw[thick] (\j,0) to (\j,6);

\foreach \i in {0,2,4,6}
	\foreach \j in {1,3,5,7,9,11}
	  \draw[thick, fill=white] (\j,\i) circle (.25);
\foreach \i in {1,3,5}
	\foreach \j in {0,2,4,6,8,10,12}
	  \draw[thick, fill=white] (\j,\i) circle (.25);

      \draw[line width=.5mm,red] (1,6) to (1,5.5) to [out=-90,in=180] (1.5,5) to 
	(7.5,5) to [out=0,in=90] (9,4.5) to (9,3.5) to [out=-90,in=180] (9.5,3) to (12,3);

    \draw[line width=.5mm,blue] (5,6) to (5,3.5) to [out=-90,in=180] (6.5,3) to (8.5,3) to [out=0,in=90] (9,2.5) to (9,1.5) to [out=-90,in=180] (9.5,1) to (12,1);

    \draw[line width=.5mm,green!80!black] (11,6) to (11,5.5) to [out=-90,in=180] (11.5,5) to (12,5);

	\foreach \i/\j in {1/6,2/5,4/5,6/5,8/5,9/4,10/3,12/3}
		 \draw[line width=.5mm,red,fill=white] (\i,\j) circle (.25);

	\foreach \i/\j in {5/6,5/4,6/3,8/3,9/2,10/1,12/1}
		 \draw[line width=.5mm,blue,fill=white] (\i,\j) circle (.25);

         \foreach \i/\j in {11/6,12/5}
		 \draw[line width=.5mm,green,fill=white] (\i,\j) circle (.25);

\foreach \i/\j in {1/1,3/1,5/1,7/1,9/1,11/1}
\node at (\i,\j) [circle,fill,inner sep=2.5pt]{};
\foreach \i/\j in {1/1,3/1,5/1,7/1,9/1,11/1}
\node at (\i,3) [circle,fill,inner sep=2.5pt]{};
\foreach \i/\j in {1/1,3/1,5/1,7/1,9/1,11/1}
\node at (\i,5) [circle,fill,inner sep=2.5pt]{};
\foreach \i/\j in {1/1,3/1,5/1,7/1,9/1,11/1}
\node at (\i,0) {{\bf +}};
\foreach \i/\j in {3/1,7/1,9/1}
\node at (\i,6) {{\bf +}};
\foreach \i/\j in {0/1,0/3,0/5}
\node at (0,\j) {{\bf +}};
	\foreach \j/\c in {1/5,3/4,5/3,7/2,9/1,11/0}
	\node at (\j,6.5) {$\c$};
	\foreach \i/\c in {1/3,3/2,5/1}
	\node at (-.5,\i) {$\c$};

\end{tikzpicture}
\vspace{4mm}
\caption{An open state for $\mathfrak{S}^\circ_{(3,2,0)}(1\,2\,3)$ with  $\text{red} > \text{blue} > \text{green}.$}
\label{openstatefig}
\end{figure}
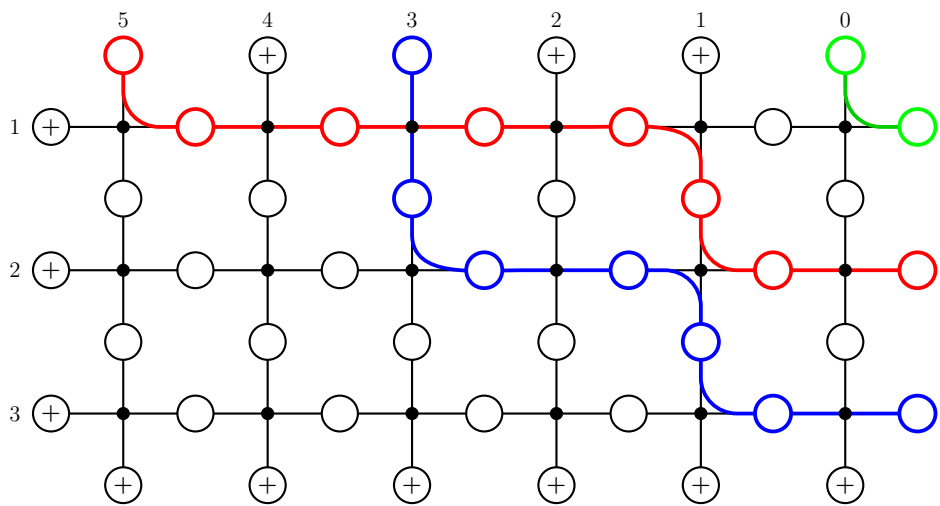

Based on the admissible configurations at a vertex given above, it turns out that we may describe admissible states in terms of {\em colored paths} through the vertices, as visualized in  \cref{closedstateex} and \cref{openclosed}, by thinking of all the edges with spin $c_i$ as connected together. For example in \cref{table2}, in the $a_{2,1}$ pattern above, the red path goes into the vertex from the left, and then goes out from the right; while the blue path goes in from above and goes out from below. 

More precisely, say $\mathfrak{s}$ is an admissible state of a model $\mathfrak{S}^\bullet_{\lambda}(w)$. For $1\leq i\leq r$, the {\em   path in color $c_i$}, denoted by $p_i$, is the set of edges that are assigned to spin $c_i$ in the state $\mathfrak{s}$. Assuming the direction is going down and right, the edges with spin $c_i$ form a connected path in the grid, which starts at the top boundary edge and ends at the right boundary edge.

\vspace{2mm}

A key idea for the proof is to compare and perform adjustments between closed states and open states, instead of working only within one particular model. We will define closed adjustment in \cref{sec:state}, which converts an open state to a closed state without changing the associated Gelfand-Tsetlin pattern. Hence there is a map 
\[\mathfrak{S}_{\lambda}^\circ(w)\xrightarrow{\text{closed adjustment}} \mathfrak{S}_{\lambda}^\bullet(w).\] To describe the intermediate stages of a series of closed adjustements, it is helpful to relax the local constraints and consider a larger set of (not necessarily admissible) states for both models. 

\begin{defn}
    Recall that a state (not necessarily admissible) is an assignment of spins to all the interior edges. We say such a state $\mathfrak{s}$ is a {\bf generalized state}, if the local configuration at each vertex is one contained in the following \cref{table11}. 

 To distinguish from admissible states, we will henceforth decorate generalized states with a tilde, for example, $\tilde{\mathfrak{s}}$.
\end{defn}

\begin{rem}
       Since the local configurations in \cref{table11} include those in \cref{table2} and \cref{table3}, generalized states generalizes simultanueously admissibile states for closed and open models. 
\end{rem}
\begin{table}[h]
\begin{center}

\begin{tabular}{ |c|c|c|c|c|c|c| } 
\hline
$a_1$ & $a_{2,1}$ & $a_{2,2}$ & $a_{2,3}$ & $a_{2,4}$ &  \multicolumn{2}{|c|}{$b_1$} \\ 
 \hline
 \begin{tikzpicture}[scale=.65,every node/.style={scale=.5}]

\foreach \i in {1}
	\draw[thick] (0,\i) -- (2,\i);
\foreach \j in {1}
	\draw[thick] (\j,0) to (\j,2);
    \foreach \i/\j in {1/0,0/1,1/2,2/1}
  \draw[thick, fill=white] (\j,\i) circle (.25);
  \foreach \i/\j in {0/1,2/1}
\node at (\i,1) {{\bf +}};
  \foreach \i/\j in {1/2,1/0}
\node at (1,\j) {{\bf +}};
 \end{tikzpicture}  & \begin{tikzpicture}[scale=.65,every node/.style={scale=.5}]

\foreach \i in {1}
	\draw[thick] (0,\i) -- (2,\i);
\foreach \j in {1}
	\draw[thick] (\j,0) to (\j,2);
    
   \draw[line width=.5mm,red] (0,1) to (2,1);
   \draw[line width=.5mm,blue] (1,0) to (1,2);
   \foreach \i/\j in {1/0,0/1,1/2,2/1}
  \draw[thick, fill=white] (\j,\i) circle (.25);

 \end{tikzpicture}  & \begin{tikzpicture}[scale=.65,every node/.style={scale=.5}]

\foreach \i in {1}
	\draw[thick] (0,\i) -- (2,\i);
\foreach \j in {1}
	\draw[thick] (\j,0) to (\j,2);
    
   \draw[line width=.5mm,blue] (0,1) to (2,1);
   \draw[line width=.5mm,red] (1,0) to (1,2);
   \foreach \i/\j in {1/0,0/1,1/2,2/1}
  \draw[thick, fill=white] (\j,\i) circle (.25);

 \end{tikzpicture} & \begin{tikzpicture}[scale=.65,every node/.style={scale=.5}]

\foreach \i in {1}
	\draw[thick] (0,\i) -- (2,\i);
\foreach \j in {1}
	\draw[thick] (\j,0) to (\j,2);
    
   \draw[line width=.5mm,blue] (1,2) to (1,1.5) to [out=-90,in=180] (1.5,1) to (2,1);
 \draw[line width=.5mm,red] (0,1) to (0.5,1) to [out=0,in=90] (1,0.5) to (1,0);
   \foreach \i/\j in {1/0,0/1,1/2,2/1}
  \draw[thick, fill=white] (\j,\i) circle (.25);

 \end{tikzpicture} & \begin{tikzpicture}[scale=.65,every node/.style={scale=.5}]

\foreach \i in {1}
	\draw[thick] (0,\i) -- (2,\i);
\foreach \j in {1}
	\draw[thick] (\j,0) to (\j,2);
    
   \draw[line width=.5mm,red] (1,2) to (1,1.5) to [out=-90,in=180] (1.5,1) to (2,1);
 \draw[line width=.5mm,blue] (0,1) to (0.5,1) to [out=0,in=90] (1,0.5) to (1,0);
   \foreach \i/\j in {1/0,0/1,1/2,2/1}
  \draw[thick, fill=white] (\j,\i) circle (.25);

 \end{tikzpicture} & \begin{tikzpicture}[scale=.65,every node/.style={scale=.5}]

\foreach \i in {1}
	\draw[thick] (0,\i) -- (2,\i);
\foreach \j in {1}
	\draw[thick] (\j,0) to (\j,2);

   \draw[line width=.5mm,red] (1,0) to (1,2);
   \foreach \i/\j in {1/0,0/1,1/2,2/1}
  \draw[thick, fill=white] (\j,\i) circle (.25);

 \end{tikzpicture}  & \begin{tikzpicture}[scale=.65,every node/.style={scale=.5}]

\foreach \i in {1}
	\draw[thick] (0,\i) -- (2,\i);
\foreach \j in {1}
	\draw[thick] (\j,0) to (\j,2);

   \draw[line width=.5mm,blue] (1,0) to (1,2);
   \foreach \i/\j in {1/0,0/1,1/2,2/1}
  \draw[thick, fill=white] (\j,\i) circle (.25);

 \end{tikzpicture}  \\ 
 \hline
 \multicolumn{2}{|c|}{$b_2$} & \multicolumn{2}{|c|}{$c_1$} & \multicolumn{2}{|c|}{$c_2$} & \\ 
 \hline
 \begin{tikzpicture}[scale=.65,every node/.style={scale=.5}]

\foreach \i in {1}
	\draw[thick] (0,\i) -- (2,\i);
\foreach \j in {1}
	\draw[thick] (\j,0) to (\j,2);
    
   \draw[line width=.5mm,red] (0,1) to (2,1);

   \foreach \i/\j in {1/0,0/1,1/2,2/1}
  \draw[thick, fill=white] (\j,\i) circle (.25);
  \foreach \i/\j in {1/2,1/0}
\node at (1,\j) {{\bf +}};
 \end{tikzpicture} &  \begin{tikzpicture}[scale=.65,every node/.style={scale=.5}]

\foreach \i in {1}
	\draw[thick] (0,\i) -- (2,\i);
\foreach \j in {1}
	\draw[thick] (\j,0) to (\j,2);
    
   \draw[line width=.5mm,blue] (0,1) to (2,1);

   \foreach \i/\j in {1/0,0/1,1/2,2/1}
  \draw[thick, fill=white] (\j,\i) circle (.25);
  \foreach \i/\j in {1/2,1/0}
\node at (1,\j) {{\bf +}};
 \end{tikzpicture} &  \begin{tikzpicture}[scale=.65,every node/.style={scale=.5}]

\foreach \i in {1}
	\draw[thick] (0,\i) -- (2,\i);
\foreach \j in {1}
	\draw[thick] (\j,0) to (\j,2);
    
   \draw[line width=.5mm,red] (0,1) to (0.5,1) to [out=0,in=90] (1,0.5) to (1,0);

   \foreach \i/\j in {1/0,0/1,1/2,2/1}
  \draw[thick, fill=white] (\j,\i) circle (.25);
  \foreach \i/\j in {1/2,2/1}
\node at (\i,\j) {{\bf +}};
 \end{tikzpicture} & \begin{tikzpicture}[scale=.65,every node/.style={scale=.5}]

\foreach \i in {1}
	\draw[thick] (0,\i) -- (2,\i);
\foreach \j in {1}
	\draw[thick] (\j,0) to (\j,2);
    
   \draw[line width=.5mm,blue] (0,1) to (0.5,1) to [out=0,in=90] (1,0.5) to (1,0);

   \foreach \i/\j in {1/0,0/1,1/2,2/1}
  \draw[thick, fill=white] (\j,\i) circle (.25);
  \foreach \i/\j in {1/2,2/1}
\node at (\i,\j) {{\bf +}};
 \end{tikzpicture}&  \begin{tikzpicture}[scale=.65,every node/.style={scale=.5}]

\foreach \i in {1}
	\draw[thick] (0,\i) -- (2,\i);
\foreach \j in {1}
	\draw[thick] (\j,0) to (\j,2);
    
   \draw[line width=.5mm,red] (1,2) to (1,1.5) to [out=-90,in=180] (1.5,1) to (2,1);

   \foreach \i/\j in {1/0,0/1,1/2,2/1}
  \draw[thick, fill=white] (\j,\i) circle (.25);
  \foreach \i/\j in {0/1,1/0}
\node at (\i,\j) {{\bf +}};
 \end{tikzpicture}&  \begin{tikzpicture}[scale=.65,every node/.style={scale=.5}]

\foreach \i in {1}
	\draw[thick] (0,\i) -- (2,\i);
\foreach \j in {1}
	\draw[thick] (\j,0) to (\j,2);
    
   \draw[line width=.5mm,blue] (1,2) to (1,1.5) to [out=-90,in=180] (1.5,1) to (2,1);

   \foreach \i/\j in {1/0,0/1,1/2,2/1}
  \draw[thick, fill=white] (\j,\i) circle (.25);
  \foreach \i/\j in {0/1,1/0}
\node at (\i,\j) {{\bf +}};
 \end{tikzpicture}  & \\ 
 \hline

\end{tabular}

\vspace{3mm}
\caption{admissible local configurations for a generalized state, with red $>$ blue.}
\label{table11}
\end{center}
\end{table}

Given a generalized state $\tilde{\mathfrak{s}}$, there are at most two different colors on the adjacent edges at each vertex. Suppose that at vertex $v$ there are two colors around, denoted by $c_i,c_j$ with $i<j$ (so $c_i>c_j$). We say the two paths $p_i$ and $p_j$ {\em cross} at the vertex $v$, if it is an
$a_{2,1}$ or $a_{2,2}$ pattern at the vertex. We say the two paths $p_i$ and $p_j$ {\em intersect} at the vertex $v$, if it is an $a_{2,1}, a_{2,2}, a_{2,3},$ or $a_{2,4}$ pattern at the vertex.

\begin{defn}\label{def:states}
Let $\tilde{\mathfrak{s}}$ be a generalized state. The state $\tilde{\mathfrak{s}}$ is called
 
(1). {\bf reduced} no patterns $b_1$ are allowed, and any two paths cross at most once.

(2). {\bf  open} if no patterns $a_{2,2}$, $a_{2,3}$, or $b_1$ are allowed at any vertices.

(3). {\bf closed} if no patterns $a_{2,2}$, $a_{2,4}$, or $b_1$ are allowed at any vertices.

\noindent 
We will denote the set of all reduced states by $\mathfrak{S}^\text{red}_{\lambda}(w).$
\end{defn}

\begin{rem}
    The definition of an open state in \cref{def:states} is equivalent to the definition of an admissible state of $\mathfrak{S}^\circ_{\lambda}(w)$.  The definition of a closed state in \cref{def:states} is equivalent to the definition of an admissible state of $\mathfrak{S}^\bullet_{\lambda}(w)$. 
\end{rem}

With the definitions above, inspection on possible local configurations gives the following properties of colored paths in open/closed states.

\begin{prop}\label{prop:opend}
    In an open state, if two paths intersect, they must cross at the first vertex that they intersect. Furthermore, any two paths cannot cross more than once.
\end{prop}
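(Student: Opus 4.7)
The proof rests on a simple observation about monotone down-right lattice paths: if two such paths share no vertex, then their relative column order (which path lies at the larger, i.e., more leftward, column) at each row where both are present is preserved from row to row.  This is a routine row-by-row induction: if $P$'s rightmost (smallest-column) position at row $r$ exceeds $Q$'s leftmost (largest-column) position, then the transition columns to row $r+1$ force the same inequality unless the ranges at row $r+1$ overlap, contradicting the no-shared-vertex hypothesis.  Inspection of the two admissible two-color patterns in the open model shows that the crossing pattern $a_{2,1}$ swaps the relative column order of the two paths involved, while the grazing pattern $a_{2,4}$ preserves it: both before and after an $a_{2,4}$ at $(s_0,k_0)$, the lower-color path lies at a column one greater than the higher-color path.

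For the first assertion, suppose for contradiction that $p_i,p_j$ (with $i<j$, so $c_i>c_j$) first intersect at $v=(s_0,k_0)$ via the pattern $a_{2,4}$.  Let $V_i=(r^*,k_0)$ be the topmost vertex of $p_i$'s maximal vertical segment ending at $v$, and $H_j=(s_0,k^*)$ the leftmost (largest-column) vertex of $p_j$'s maximal horizontal segment ending at $v$; these satisfy $1\leq r^*<s_0$ and $k^*\geq k_0+1$, and the starting columns obey $c_i^0>c_j^0\geq k^*>k_0$.  At row $1$, $p_i$ is at column $c_i^0>c_j^0$, so $p_i$ is left of $p_j$.  At row $r^*$, however, $p_i$'s column range is $[k_0,c^{(i)}]$ with rightmost column $k_0$, while monotonicity of $p_j$'s pre-$H_j$ sub-path forces its column at row $r^*<s_0$ to be at least $k^*>k_0$.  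Hence either the two paths share a vertex at row $r^*$, or $p_j$ is strictly to the left of $p_i$ at row $r^*$, reversing the initial order; in the latter case the lemma forces a shared vertex at some earlier row.  Either way, $p_i,p_j$ share a vertex strictly before $v$, contradicting the minimality of $v$.

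For the second assertion, suppose $p_i,p_j$ cross at two distinct vertices $v_1=(s_1,k_1)$ and $v_2=(s_2,k_2)$ (both $a_{2,1}$); monotonicity and the $a_{2,1}$ pattern force $s_2>s_1$ and $k_2<k_1$.  Just after $v_1$, $p_i$ lies at column $k_1-1$ and $p_j$ at column $k_1$, so $p_j$ is strictly left of $p_i$; just before $v_2$, $p_i$ lies at column $k_2+1$ and $p_j$ at column $k_2$, so $p_i$ is strictly left of $p_j$.  The column order has swapped between $v_1$ and $v_2$, but by the lemma combined with the order-preservation at $a_{2,4}$, such a swap requires an intermediate $a_{2,1}$ crossing of $p_i$ and $p_j$, contradicting the choice of $v_2$ as the next crossing.

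The step requiring the most care is the row-$r^*$ analysis in the first assertion: one must use the monotonicity of $p_j$'s pre-$H_j$ sub-path to conclude that $p_j$'s column at any row $<s_0$ is at least $k^*$, hence strictly exceeds $p_i$'s rightmost column $k_0$ at row $r^*$.  With this inequality in hand, the lemma together with the inspection of the two two-color patterns combine to produce the contradiction.
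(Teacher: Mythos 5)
Your proof is correct and follows essentially the same route as the paper's: both rest on inspecting the two admissible two-color configurations of the open model ($a_{2,1}$ crosses and swaps the left-to-right order of the paths, $a_{2,4}$ grazes and preserves it) and combining this with the down-right monotonicity of the colored paths, which start with $p_i$ to the left of $p_j$. The only difference is one of detail: you make explicit, via the column-order-preservation lemma for disjoint monotone paths, the planarity argument that the paper compresses into ``there is only one way to draw the picture.''
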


\begin{proof}
Let $p_i$ and $p_j$ be two paths. Without loss of generality assume $i<j$. Now for any vertex that $p_i$ and $p_j$ intersect, we have the following observations by inspecting the admissible configurations in open states:

\begin{enumerate}
    \item If color $c_i$ goes in from the left, it must go out to the right.

    \item If color $c_i$ goes in from above, then it must go out from the right.
\end{enumerate}

By the two rules above, there is only one way to draw the picture as the following: the colored
paths cross the first time they meet but never again.
    
\end{proof}

\begin{prop}\label{prop:closedd}
    In a closed state, if two paths cross, they must cross at the last vertex that they intersect. Furthermore, any two paths cannot cross more than once.
\end{prop}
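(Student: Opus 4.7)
The plan is to parallel the proof of \cref{prop:opend} by first identifying the ``dual'' structural observation for closed states: inspecting \cref{table2}, every two-color configuration allowed in a closed state, namely $a_{2,1}$ and $a_{2,3}$, has the higher-indexed color entering from the left and the lower-indexed color entering from above. In particular, after a crossing $(a_{2,1})$ at a vertex $v$ of $p_i$ and $p_j$ with $i<j$, the higher color $c_i$ exits east and the lower color $c_j$ exits south.

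Assume for contradiction that a crossing of $p_i,p_j$ at $v$ is followed by another intersection; choose $v'$ to be the earliest one. Since $p_j$ leaves $v$ going south and must re-enter $v'$ from above while moving only south or east, the row of $v'$ strictly exceeds that of $v$; symmetrically, so does the column. Writing $r,c\ge 1$ for the row- and column-gaps, the fact that each of the segments $H\subset p_i$ and $L\subset p_j$ from $v$ to $v'$ begins \emph{and} ends with a step of the same orientation (east for $H$, south for $L$) forces $r,c\ge 2$, hence $K:=r+c\ge 4$.

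The main step is then a discrete intermediate-value argument. Let $h_0,\ldots,h_K$ and $l_0,\ldots,l_K$ be the vertex sequences of $H$ and $L$, and let $a_k$ be the row component of $h_k-l_k$. Each step of $H$ or $L$ is south $(1,0)$ or east $(0,1)$, so $a_{k+1}-a_k\in\{-1,0,+1\}$. Because $H$ begins east while $L$ begins south we have $a_1=-1$; because $H$ ends east while $L$ ends south we have $a_{K-1}=+1$; and $a_0=a_K=0$. Hence $a_{k^*}=0$ for some $k^*\in\{2,\ldots,K-2\}$, which gives $h_{k^*}=l_{k^*}$: a vertex strictly between $v$ and $v'$ that both $p_i$ and $p_j$ visit. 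In any closed state this shared vertex carries a closed-admissible two-color configuration, which by definition is an intersection, contradicting the minimality of $v'$.

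This establishes the first assertion of the proposition. The second assertion (no two paths cross more than once) is then immediate: a hypothetical second crossing after the first would itself be an intersection after a crossing, which has just been ruled out. I expect the only delicate point to be executing the IVT step while verifying $r,c\ge 2$ so that $k^*$ lands strictly inside $(0,K)$; the rest is bookkeeping that mirrors the case analysis of \cref{prop:opend}, only with ``first'' replaced by ``last''.
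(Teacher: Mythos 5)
Your proof is correct, and it reaches the proposition by a genuinely different route from the paper's. The paper's argument is a short inspection: since in a closed state the higher color can never enter a vertex from above, after a crossing (where the higher color exits east and the lower exits south) the higher color would have to enter the \emph{next} intersection from above --- and this geometric fact is simply asserted. You turn the argument around: you use closed-admissibility at the hypothetical next intersection $v'$ to force the entry pattern there (higher color from the west, lower from the north), and then prove, via the row/column gap bounds and the discrete intermediate-value argument on $a_k$, that this entry pattern is incompatible with the minimality of $v'$, because the two monotone segments would be forced to meet at an intermediate vertex, which is itself a closed-admissible intersection. In effect you supply a rigorous proof of exactly the geometric step the paper leaves implicit; your version is longer but self-contained, and the IVT device would transfer to other monotone-path arguments in this setting.

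Two small points. First, your opening sentence says the ``higher-indexed'' color enters from the left; with the ordering $c_1 > c_2 > \cdots > c_r$ the color entering from the left in $a_{2,1}$ and $a_{2,3}$ is the \emph{higher} color, i.e.\ the one with the \emph{lower} index --- your next sentence uses the correct convention, so this is only a terminological slip. Second, the step from $a_{k^*}=0$ to $h_{k^*}=l_{k^*}$ needs the one-line observation that $h_{k}$ and $l_{k}$ are both reached from the common vertex $v$ by exactly $k$ unit steps, each increasing row plus column by one, so the column component of $h_k-l_k$ equals $-a_k$; hence vanishing of the row difference forces equality of the vertices. With that line added the argument is complete.
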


\begin{proof}

Let $p_i$ and $p_j$ be two paths. Without loss of generality assume $i<j$. Now for any vertex that $p_i$ and $p_j$ intersect, we have the following observations by inspecting the admissible configurations in closed states:

\begin{enumerate}
    \item If color $c_i$ goes in from the left, it could go out to the right or to the below.

    \item The color $c_i$ cannot go in from above.
\end{enumerate}

    By the two rules above, except at the last intersection, the red path cannot go out from the
right, since then at the next intersection red path will go in from above, which is not allowed.
At the last intersection, the red path can go out from the right or the below, so there are two
possibilities.
\end{proof}

Here is an immediate corollary:

\begin{cor}
    An open state is a reduced state. A closed state is a reduced state. \hfill \qed
\end{cor}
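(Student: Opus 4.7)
The plan is to observe that this corollary is an essentially immediate consequence of the two preceding propositions combined with the defining exclusions in \cref{def:states}, so no substantial new work is required; the proposal is really just to check two items against the definition of a reduced state.

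Recall that a generalized state is reduced if (i) no vertex exhibits the $b_1$ pattern, and (ii) any two colored paths cross at most once. First I would handle condition (i): by the very definition of an open (respectively closed) state in \cref{def:states}, the pattern $b_1$ is forbidden at every vertex, so (i) holds automatically in both cases. Thus the entire content of the corollary lies in checking condition (ii).

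For condition (ii), I would invoke \cref{prop:opend} in the open case and \cref{prop:closedd} in the closed case. Both propositions explicitly assert that any two paths cannot cross more than once, which is exactly what is needed. Assembling these two ingredients gives the corollary in each case.

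The only potential obstacle is making sure that the notion of ``crossing'' used in the definition of a reduced state matches the one used in \cref{prop:opend} and \cref{prop:closedd}; this is immediate from the definition given just before \cref{def:states}, where two paths $p_i$ and $p_j$ (with $i<j$) are declared to cross at a vertex precisely when the local pattern there is of type $a_{2,1}$ or $a_{2,2}$. Since the propositions quoted are phrased in exactly this language, no reconciliation is needed, and the corollary follows by concatenating the two observations above.
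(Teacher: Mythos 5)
Your proposal is correct and matches the paper's intent exactly: the paper presents this as an immediate corollary of \cref{prop:opend} and \cref{prop:closedd} (hence the absence of a written proof), with the exclusion of the $b_1$ pattern built directly into the definitions of open and closed states in \cref{def:states}. Your explicit check that the notion of crossing agrees is a reasonable bit of care, but nothing beyond the paper's implicit argument is needed.
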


\subsection{Gelfand-Tsetlin patterns and semistandard Young tableau}\label{subsec:gtp}  \hfill

Let $\lambda=(\lambda_1,...,\lambda_r)$ be a dominant weight. In this section we will relate Gelfand-Tsetlin patterns and semistandard Young tableau to states of colored models.

Gelfand–Tsetlin patterns are triangular arrays of integers satisfying certain inequalities. More precisely, a {\em Gelfand-Tsetlin pattern of size $r$} is a collection of integers $A_{i,j}$'s with $1 \leq i\leq n$ and $1\leq j\leq r-i+1$. Each row is weakly-decreasing and adjacent rows are interleaving in the sense that if $(\lambda_1,...,\lambda_k)$ and $(\mu_1,...,\mu_{k-1})$ are numbers in two adjacent rows, then 
\[\lambda_1\geq \mu_1\geq \lambda_2\geq \mu_2\geq \dots \geq \lambda_{k-1}\geq \mu_{k-1}\geq \lambda_k.\]

A Gelfand-Tsetlin pattern is {\em left-strict} if its entries satisfy $A_{i,j}>A_{i+1,j}\geq A_{i,j+1}$.

A {\em Young diagram} in shape $\lambda$ is a finite collection of boxes, or cells, arranged in left-justified rows, with $\lambda_i$ boxes on the $i$-th row. A {\em semistandard Young tableau $T$ of shape $\lambda$} is a filling of boxes in the Young diagram by $\{1,2,3,...,r\}$ such that the rows are weakly increasing, and the columns are strictly increasing. The weight of $T$ is $(\mu_1,...,\mu_r)$ where $\mu_i$ is the number of $i$’s in $T$. We denote the set of all semistandard Young tableaux of shape $\lambda$ by $\mathcal{B}_\lambda$.

The Gelfand-Tsetlin patterns, semistandard Young tableau, and states of colored models are closely related: Given any state $\mathfrak{s}$ of the model $\mathfrak{S}^\bullet_{\lambda}(w)$ or $\mathfrak{S}^\circ_{\lambda}(w)$, we can construct an associated Gelfand-Tsetlin pattern $\text{GTP}(\mathfrak{s})=(A_{ij})$ of size $r$ by the following rule:  The entry $A_{ij}$ will be the label of the $j$-th colored vertical edge from left to right above the $i$-th row of vertices. For example, both states in \cref{openclosed} have the following Gelfand-Tsetlin pattern

\[
\begin{Bmatrix}
5 & & 3 & & 0 \\
& 3 & & 1 & \\
&& 1 &&
\end{Bmatrix}.
\]

\begin{prop}
    If $\tilde{\mathfrak{s}}$ is a generalized state, then $\text{GTP}(\tilde{\mathfrak{s}})$ is a left-strict Gelfand-Tsetlin pattern of size $r$ with first row $\lambda+\rho$.
\end{prop}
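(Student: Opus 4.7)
The plan is to check, in order, that (i) the top row equals $\lambda+\rho$, (ii) the $i$-th row has length $r-i+1$, (iii) the entries of each row are strictly (hence weakly) decreasing, and (iv) adjacent rows satisfy the left-strict interleaving $A_{i,j}>A_{i+1,j}\geq A_{i,j+1}$. Of these, (i)--(iii) are essentially definition-unwinding: the top boundary places $c_k$ at column $\lambda_k+r-k$, and since $\lambda$ is weakly decreasing these labels are strictly decreasing in $k$, giving $A_{1,j}=\lambda_j+r-j$; a color $c_k$ enters at the top and exits horizontally in row $w(k)$, so the colors appearing above row $i$ are exactly $\{c_k:w(k)\geq i\}$, a set of size $r-i+1$; and entries in a row are distinct column labels.

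The real content is (iv), which I would reduce to a single conservation identity per row. Fix $i$ and let $T_i$ be the set of columns with a colored top edge in row $i$. Define $f_1(c)=|T_i\cap[c,N-1]|$ and $f_2(c)=|T_{i+1}\cap[c,N-1]|$, and apply conservation of colored paths to the sub-rectangle formed by row $i$ restricted to columns of label $\geq c$. The only colored edges crossing its boundary are the $f_1(c)$ top entries, the $f_2(c)$ bottom exits, nothing from the (uncolored) left boundary, and the single horizontal edge immediately to the right of the vertex at column $c$. Writing $\eta(c)\in\{0,1\}$ for the indicator of this last edge being colored gives
\[
h(c):=f_1(c)-f_2(c)=\eta(c)\in\{0,1\}\qquad\text{for all } c.
\]
This identity is the engine that powers both inequalities in (iv).

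For the weak inequality $A_{i+1,j}\geq A_{i,j+1}$: a violation at $c:=A_{i+1,j}$ would give $f_2(c)=j$ and $f_1(c)\geq j+1$, hence $h(c)\geq 1$, so $h(c)=1$; but then $c\in T_i$ would force $f_1(c)\geq j+2$ and hence $h(c)\geq 2$, so $c\notin T_i$. Therefore the vertex $(i,c)$ is a $c_1$-pattern, whose right horizontal edge is uncolored, forcing $\eta(c)=0$ and hence $h(c)=0$, a contradiction. For the strict inequality $A_{i,j}>A_{i+1,j}$: assuming $A_{i+1,j}\geq A_{i,j}$ and setting $c=A_{i+1,j}$, a rank argument shows $c=A_{i,j}$, so $c\in T_i\cap T_{i+1}$ and $h(c)=0$. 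Inspecting the patterns of \cref{table11} with color on both the top and bottom edges, the four patterns $a_{2,1},a_{2,2},a_{2,3},a_{2,4}$ all have a colored right edge and give $h(c)=1$, which rules them out.

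The main obstacle is the remaining $b_1$ pattern, which has an uncolored right edge and can indeed produce $h(c)=0$ with $c\in T_i\cap T_{i+1}$; this is genuinely a non-trivial possibility for an arbitrary generalized state, and I expect this is the step at which the hypothesis on $\tilde{\mathfrak{s}}$ must eliminate $b_1$ -- most naturally by taking $\tilde{\mathfrak{s}}$ reduced in the sense of \cref{def:states}, which forbids $b_1$ by definition. Once $b_1$ is excluded, the four conditions (i)--(iv) follow uniformly from the flow identity above, completing the verification that $\text{GTP}(\tilde{\mathfrak{s}})$ is a left-strict Gelfand--Tsetlin pattern with first row $\lambda+\rho$.
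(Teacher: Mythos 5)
Your proof is correct and, in its key step, more careful than the paper's own. The paper's argument is purely qualitative: the row lengths drop by one because exactly one color exits at the right boundary of each row, and left-strictness is then asserted from the claim that, absent $b_1$ patterns, every colored path must move at least one column to the right when descending a row. That claim is not literally true --- at an $a_{2,1}$ or $a_{2,2}$ crossing one of the two paths passes straight down through the vertex, so its column label is preserved from row $i$ to row $i+1$ --- but left-strictness survives because such a crossing forces the right-hand horizontal edge at that vertex to be colored, which shifts the path's index in the next row of the pattern. Your conservation identity $f_1(c)-f_2(c)=\eta(c)\in\{0,1\}$ is exactly the bookkeeping that makes this index shift rigorous, and it cleanly separates the weak interleaving inequality (which holds for every generalized state) from the strict one (which is equivalent to the absence of $b_1$). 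What the paper's route buys is brevity; what yours buys is an actual proof of the strict inequality rather than an appeal to a statement about individual paths that fails at crossings.

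Your flag on the $b_1$ pattern points at a genuine imprecision in the paper rather than a gap in your argument. \cref{table11} does list $b_1$ among the admissible configurations of a generalized state, yet the paper's proof parenthetically asserts ``(no $b_1$ patterns)'', and the proposition is false as literally stated: a generalized state with a $b_1$ vertex produces equal entries at the same index in consecutive rows, violating $A_{i,j}>A_{i+1,j}$ --- indeed your own analysis shows $b_1$ is the unique local configuration compatible with $h(c)=0$ and $c\in T_i\cap T_{i+1}$. The statement should be read as applying to reduced (hence to open and closed) states, which is the only setting in which it is later invoked; with that reading your proof is complete.
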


\begin{proof}
    By definition, above the first row, the edges are colored at the columns indexed by $\lambda+\rho$. Inductively, when we move from row $i$ to row $i+1$, the color at the right boundary of row $i$ will not show up on the vertical edges above row $i+1$, hence there will be one less number in the pattern. By examining the admissible local configurations in \cref{table11}, a colored path cannot go straight down without crossing with other paths at a vertex (no $b_1$ patterns), thus every colored path mush appear at least one column to the right moving down a row. Inductively this implies that the Gelfand-Tsetlin pattern is left-strict.
\end{proof}

Given a semistandard Young tableau $T$ in $\mathcal{B}_\lambda$, there is associated a Gelfand-Tsetlin pattern with top row $\lambda$ as follows. The top row is the shape $\lambda$ of the diagram $T$. The $k$-th row is the shape of the diagram after deleting all the boxes with entries $\geq r-k+2$. Clearly it is a bijection, and the semistandard Young tableau for the Gelfand-Tsetlin pattern above is:
\[
T =
\begin{array}{|c|c|c|c|c|}
\hline
1 & 2 & 2 & 3 & 3 \\
\hline
2 & 3 & 3 \\
\cline{1-3}

\end{array}
\]
For more detail, we refer the readers to \cite{kirillov1995groups}.

We will need to consider all possible states with the same underlying Gelfand-Tsetlin pattern for different models. Hence we incorporate the Gelfand-Tsetlin patterns into account and make the following definition.

\begin{defn}
    Let $A$ be a left-strict Gelfand-Tsetlin pattern with top row $\lambda+\rho$.

(1). Define $\mathfrak{S}^\bullet_{\lambda}(w;A) \subseteq \mathfrak{S}^\bullet_{\lambda}(w)$ be the set of closed states $\mathfrak{s}$ such that $\text{GTP}(\mathfrak{s})=A$.

(2). Define $\mathfrak{S}^\circ_{\lambda}(w;A) \subseteq \mathfrak{S}^\circ_{\lambda}(w)$ be the set of open states $\mathfrak{s}$ such that $\text{GTP}(\mathfrak{s})=A$.

(3). Define $\mathfrak{S}^\text{red}_{\lambda}(w;A) \subseteq \mathfrak{S}^\text{red}_{\lambda}(w)$ be the set of reduced states $\mathfrak{s}$ such that $\text{GTP}(\mathfrak{s})=A$.
\end{defn}

With the definition, a result in \cite{BBBG} can be rephrased into the following:

\noindent
\begin{prop}{\rm(}{\cite[Proposition~4.1]{BBBG}}{\rm)}\label{thm:BBBG1}
    Let $A$ be a left-strict Gelfand-Tsetlin pattern with top row $\lambda+\rho$, there exists a unique element $w=w_A\in S_r$ such that $\mathfrak{S}^\circ_{\lambda}(y;A)$ is empty if $y\neq w_A$, and $\mathfrak{S}^\circ_{\lambda}(y;A)$ has a unique element $\mathfrak{s}_A$ if $y=w_A$. Therefore the Gelfand-Tsetlin pattern $A$ uniquely determines a boundary flag $w_A$ and an open state $\mathfrak{s}_A$.
\end{prop}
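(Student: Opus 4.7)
The plan is to construct the unique open state $\mathfrak{s}_A$ associated to a given left-strict Gelfand--Tsetlin pattern $A$ (with top row $\lambda+\rho$) by induction on rows, processing the lattice from top to bottom. The key point is that the open-admissible configurations of \cref{table3}, together with \cref{prop:opend} and the forbiddance of the $b_1$ pattern, leave no freedom in arranging colors once the column data of $A$ is specified; as a byproduct of the construction, the boundary flag $w_A$ is read off from which color exits at each row.

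At the top of row $1$, the boundary prescribes color $c_i$ at column $A_{1,i}$. Assume inductively that the colors entering the top of row $i$ have been determined, located at the columns $A_{i,1}>A_{i,2}>\cdots>A_{i,r-i+1}$. I would then process the vertices of row $i$ from left to right, maintaining a \emph{current horizontal color}, namely the color of the horizontal edge entering the next vertex (or none). At each vertex one of four cases arises, each with an essentially unique outgoing configuration. When a horizontal color from the left meets a vertical color from above, the vertex is forced to be $a_{2,1}$ or $a_{2,4}$ depending on which of the two colors has higher rank, and in either case the higher-ranked color continues to the right while the lower-ranked color continues downward. When only a vertical color enters from above, the vertex must be $c_2$ because $b_1$ is disallowed, turning the color to the right and updating the current horizontal color. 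When only a horizontal color is present, the vertex is either $b_2$ (continue right) or $c_1$ (turn down); the choice is forced by whether the current column appears in row $i+1$ of $A$. Finally, when no color is present the vertex is $a_1$.

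The interlacing and left-strict inequalities $A_{i,j}>A_{i+1,j}\geq A_{i,j+1}$ guarantee that each entry $A_{i+1,j}$ of the next row of $A$ is produced by exactly one such event: either a two-colored vertex at a column $A_{i,k}$ when $A_{i+1,j}$ coincides with some entry of row $i$, or a $c_1$ turn-down at column $A_{i+1,j}$ lying strictly between two consecutive top-of-row entries. The colors carried down to the top of row $i+1$ are thereby uniquely determined, completing the induction. After processing all $r$ rows, the colors exiting the right boundary define a permutation $w_A\in S_r$ by the rule that color $c_i$ exits at row $w_A(i)$, and the state constructed is a specific element $\mathfrak{s}_A\in\mathfrak{S}^\circ_\lambda(w_A;A)$. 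Conversely, any state in $\mathfrak{S}^\circ_\lambda(y;A)$ must coincide with $\mathfrak{s}_A$ at every vertex, since every local choice is forced by open-admissibility together with the column data of $A$; in particular we must have $y=w_A$.

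The main technical obstacle is the ``only horizontal'' case: one must verify that, in each interval between consecutive top-of-row columns, the algorithm triggers exactly the correct number of turn-down events at exactly the columns prescribed by row $i+1$ of $A$, and that the color turning down has the rank dictated by the higher-rank-wins rule, so that the inductive description of the colors at the top of row $i+1$ is preserved. This reduces to a local combinatorial check relying on the interlacing $A_{i,j}>A_{i+1,j}\geq A_{i,j+1}$ together with the cross-at-first-intersection property of \cref{prop:opend}.
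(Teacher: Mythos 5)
Your proposal is correct and takes essentially the same route as the paper, which disposes of this proposition in one line by observing that the open-model local configurations force the entire state; your left-to-right sweep is simply that inspection carried out explicitly, correctly noting that the only non-forced choice ($b_2$ versus $c_1$) is resolved by the positions of colored vertical edges recorded in $A$, with the interlacing inequalities guaranteeing consistency. (The appeal to \cref{prop:opend} at the end is not actually needed; the higher-rank-wins rule at two-colored vertices already follows directly from \cref{table3}.)
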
 

\begin{proof}
    By inspecting the local admissible configurations, we see the states and the boundary conditions on the right edge (i.e., the flag) is uniquely determined by the boundary conditions on the top row. 
\end{proof}

\begin{rem}
    In fact, a model is called {\em deterministic} if for any vertex $v$, assignments of the spins for two adjacent edges determine the spins for the remaining two edges at the vertex. Open models are deterministic. However, closed models are not deterministic, since both $a_{2,1}$ and $a_{2,3}$ patterns are allowed. This fact also leads to different results in \cref{prop:opend} and \cref{prop:closedd}.

    Since open models are deterministic, every element $T\in\mathcal{B}_\lambda$ corresponds to a unique state of a model, hence alongside determining the flag.
\end{rem}

In \cref{sec:state}, we will prove an analogue of \cref{thm:BBBG1} for closed states.

\section{\texorpdfstring{Existence and uniqueness of closed states}{}}
\label{sec:state}

With the setup in \cref{sec:model}, the main goal of this section is to prove the following theorem:

\begin{thm}\label{thm:state}
    Let $w_0$ be the longest element in $S_r$. Let $A$ be a strict Gelfand-Tsetlin pattern with top row $\lambda+\rho$ and $w=w_A$ be the associated element in the Weyl group. If $w_0y\leq w_0w_A$ (equivalently, $y\geq w_A$) in the Bruhat order of $S_r$, then $\mathfrak{S}^\bullet_{\lambda}(y;A)$ has a unique element. If $w_0y\nleq w_0w$ (equivalently, $y\ngeq w_A$) then $\mathfrak{S}^\bullet_{\lambda}(y;A)$ is empty.
\end{thm}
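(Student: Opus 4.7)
My plan uses the two adjustments sketched in the introduction as the main tools: a closed adjustment converting an open state into a closed state with the same GTP and flag, and a raising adjustment converting a closed state with flag $y$ into a closed state with a Bruhat-larger flag and the same GTP. Together with the inverse lowering adjustment, these will let me induct on the Bruhat length of the flag above $w_A$.

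For the base case $y = w_A$, \cref{thm:BBBG1} supplies a unique open state $\mathfrak{s}_A$ with $\text{GTP}(\mathfrak{s}_A) = A$ and flag $w_A$, in which by \cref{prop:opend} every pair of intersecting paths $(p_i, p_j)$ crosses at the first intersection and uses the open bounce pattern $a_{2,4}$ at each subsequent intersection. I would define the closed adjustment to move each such pair's unique crossing from the first to the last intersection: convert every intermediate $a_{2,4}$ into the closed bounce $a_{2,3}$, replace the $a_{2,1}$ at the first intersection by $a_{2,3}$, install a new $a_{2,1}$ at the last intersection, and exchange $c_i$ and $c_j$ along the intervening horizontal edges. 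The output $\mathfrak{t}_A$ then lies in $\mathfrak{S}^\bullet_\lambda(w_A; A)$, since neither the set of colored edges (hence the GTP) nor any boundary edge is affected. For uniqueness at the base I would invoke \cref{prop:closedd}: in any closed state with GTP $A$ each intersecting pair has only the binary choice $a_{2,1}$ versus $a_{2,3}$ at its last intersection, and I would show that the minimal flag $w_A$ forces $a_{2,1}$ wherever the pair intersects, leaving $\mathfrak{t}_A$ as the only possibility.

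For the inductive step, whenever $s_i y > y$ in Bruhat order and $y \geq w_A$, I would exhibit a bijection $\mathfrak{S}^\bullet_\lambda(y; A) \leftrightarrow \mathfrak{S}^\bullet_\lambda(s_i y; A)$ as follows. The raising adjustment locates the intersection vertex at which a local swap $a_{2,1} \to a_{2,3}$ changes the boundary flag by exactly $s_i$, performs that swap, and propagates the induced color change along the intervening horizontal segment to the right boundary; the lowering adjustment performs the reverse move. By induction on $\ell(y) - \ell(w_A)$ together with uniqueness at the base, $|\mathfrak{S}^\bullet_\lambda(y; A)| = 1$ for every $y \geq w_A$. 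For emptiness when $y \ngeq w_A$ I argue contrapositively: given any $\mathfrak{s} \in \mathfrak{S}^\bullet_\lambda(y; A)$, iteratively apply the lowering adjustment, strictly decreasing the Bruhat length of the flag while preserving the GTP; the process must terminate at $\mathfrak{t}_A$ by the base case, so $y \geq w_A$.

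The principal technical obstacle is verifying the well-definedness of the raising and lowering adjustments. One must check, vertex by vertex along each affected horizontal segment, that no forbidden pattern (such as $a_{2,2}$, $a_{2,4}$, or $b_1$) is produced; that each crossing or bouncing pair continues to satisfy \cref{prop:closedd} after the swap; and that the boundary flag is altered by exactly the desired simple reflection and no other. A careful case analysis of the admissible closed configurations in Table 2, combined with the structural constraints on colored paths established in \cref{sec:model}, should make the plan rigorous.
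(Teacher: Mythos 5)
Your overall strategy matches the paper's: build the closed state at $w_A$ by closed-adjusting the unique open state of \cref{thm:BBBG1} (this is \cref{prop:wstate}), use local crossing/uncrossing moves to change the flag while fixing the GTP, and prove emptiness for $y\ngeq w_A$ by repeatedly introducing crossings until the flag descends to $w_A$ (this is \cref{prop:y<w}). However, your inductive step has a genuine gap: you propose to reach every $y\geq w_A$ from $w_A$ by a chain of moves each of which multiplies the flag by a \emph{simple} reflection. Bruhat intervals $[w_A,y]$ do not in general admit saturated chains realized by one-sided simple multiplication. Concretely, take $r=3$, $w_A=s_1$, $y=s_1s_2$: this is a covering relation, $y=s_1\cdot s_2$ is right multiplication by $s_2$ but $y=(s_1s_2s_1)\cdot s_1$ on the left, so the unique $u$ with $y=s_iu$ and $\ell(u)=\ell(y)-1$ is $u=s_2$, which does \emph{not} satisfy $u\geq w_A$. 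Your induction on $\ell(y)-\ell(w_A)$ therefore cannot get started for this pair, and no bijection of the advertised form $\mathfrak{S}^\bullet_\lambda(u;A)\leftrightarrow\mathfrak{S}^\bullet_\lambda(s_iu;A)$ with $u\geq w_A$ is available. The paper avoids this in two ways: for existence, \cref{prop:y>w} uses \cref{bruhat}(4) to take a saturated chain $w_A<u_1<\cdots<y$ whose steps are right multiplications by \emph{arbitrary} transpositions $t=(i,j)$, and the ``adjustment 2'' uncrosses the corresponding (not necessarily adjacent) pair of colors $c_i,c_j$; for uniqueness, the induction runs \emph{downward} from $w_0$, where a right descent $ys_i<y$ by a simple reflection always exists, rather than upward from $w_A$.

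Two smaller points. First, the covering hypothesis $\ell(y)=\ell(yt)+1$ is not merely bookkeeping: it is exactly what the paper uses (via the $\pm1$ crossing-parity argument of \cref{lem:closedadj}) to show the uncrossing move does not create a doubly-crossing pair, i.e.\ that reducedness is preserved; you have correctly identified this as the technical obstacle but it must be resolved before the ``raising adjustment'' is even well defined, and the resolution genuinely needs the covering relation, not just adjacency of the swapped rows. Second, your base-case uniqueness claim that the flag $w_A$ ``forces $a_{2,1}$ wherever the pair intersects'' cannot be literally correct (a reduced pair crosses at most once); what is true, and what the paper proves by combining \cref{prop:y<w} with the uniqueness of the open state, is that at flag $w_A$ every intersecting pair must cross, and uniqueness is then obtained globally by the downward induction rather than pair-by-pair at the base.
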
 

\begin{proof}
    The proof will be done in four steps. We first prove that $\mathfrak{S}^\bullet_{\lambda}(w_A;A)$ is nonempty in \cref{prop:wstate}. Next, we show $\mathfrak{S}^\bullet_{\lambda}(y;A)$ is nonempty if $y\geq w_A$ in the Bruhat order of $S_r$. This is \cref{prop:y>w}. We then show the converse: $\mathfrak{S}^\bullet_{\lambda}(y;A)$ is empty if $y\ngeq w_A$. This is \cref{prop:y<w}. Finally, we show the uniqueness also  in \cref{prop:y<w}.
\end{proof}

Suppose that $y\in S_r$ and $1\leq i<j\leq r$ so the color $c_i>c_j$. We say $(i,j)$ is a {\em crossing pair}, if color $c_i$ is above color $c_j$ on the right boundary. By the definition of the model, the color $c_i$ is at row $y(i)$ on the right boundary. Hence $(i,j)$ is a crossing pair iff $i<j$ and $y(i)<y(j)$ in $y\in S_r$, i.e., $(i,j)$ is an inversion in $w_0y.$

\begin{prop}
    Let $y\in S_r$ and $y{\bf c}$ be the boundary flag. If $(i,j)$ is an inversion in $w_0y$, then in any reduced state $\mathfrak{s}$, the path $p_i$ and $p_j$ will cross. Conversely, if $p_i$ and $p_j$ cross in a reduced state $\mathfrak{s}$ then $(i,j)$ is a crossing pair. 
\end{prop}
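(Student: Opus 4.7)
The plan is to prove both directions of the proposition simultaneously via a parity argument based on the Jordan curve theorem, then invoke the reduced-state hypothesis at the end to upgrade the parity of the crossing count to an exact value.

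First I would set up the topological picture. In any generalized state, each colored path $p_k$ is a simple arc running from $P_k$ on the top boundary (column $\lambda_k+r-k$) to $Q_k$ on the right boundary (row $y(k)$): every admissible local configuration in \cref{table11} uses a given color on at most two edges and always routes that color to the right or to the bottom, so no vertex is revisited. Moreover, reading the arc-style drawings of the intersection patterns, the two colors in an $a_{2,3}$ or $a_{2,4}$ pattern occupy \emph{disjoint} quadrants of the vertex and thus share no common plane point, while the $a_{2,1}$ and $a_{2,2}$ patterns produce genuine transverse crossings at the center of the vertex. Consequently, as plane arcs, $p_i\cap p_j$ is exactly the set of $a_{2,1}/a_{2,2}$ vertices shared by $p_i$ and $p_j$.

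Next I would apply the Jordan curve theorem with $p_i$ as the reference arc. Adjoining to $p_i$ the top-boundary segment from $P_i$ rightward to the top-right corner and the right-boundary segment from that corner down to $Q_i$ yields a Jordan curve bounding an open region $R_i$ inside the rectangle. For $i<j$ the inequality $\lambda_i+r-i>\lambda_j+r-j$ places $P_j$ strictly in the top-boundary portion of $\partial R_i$, and the inward direction from $P_j$ points into $R_i$. As for $Q_j$: if $y(j)<y(i)$, then $Q_j$ lies between the top-right corner and $Q_i$ on $\partial R_i$, with the inward direction pointing into $R_i$; if instead $y(j)>y(i)$, then $Q_j$ lies below $Q_i$ on the right edge, on the boundary of the complementary region, with the inward direction pointing into the complement of $R_i$. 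Since $p_j$ is confined to the rectangle's interior away from its endpoints, its transverse crossings with $\partial R_i$ all occur along $p_i$, and by the previous paragraph these are precisely the $a_{2,1}/a_{2,2}$ vertices shared by $p_i$ and $p_j$. The Jordan curve theorem then gives that this count has odd parity iff the two inward endpoints of $p_j$ lie on opposite sides of $\partial R_i$, i.e., iff $y(j)>y(i)$, equivalently iff $(i,j)$ is an inversion of $w_0y$, i.e., a crossing pair.

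The final step is short: by \cref{def:states}, in a reduced state any two paths cross at most once, so the parity dichotomy collapses to an exact count, namely one crossing when $(i,j)$ is a crossing pair and none otherwise. Both directions of the proposition follow. I expect the main delicate point to be verifying cleanly that $p_i$ and $p_j$ share no plane point at $a_{2,3}$ or $a_{2,4}$ vertices while being genuinely transverse at $a_{2,1}$ and $a_{2,2}$ vertices, since this is what converts the mod-$2$ Jordan-curve count into a count of crossing vertices in the sense of \cref{def:states}; once this identification is made, the rest of the argument is a standard linking/parity computation.
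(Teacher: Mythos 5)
Your proof is correct and rests on the same underlying idea as the paper's: the two paths have interleaved endpoints exactly when $(i,j)$ is an inversion of $w_0y$, which forces an odd number of crossings, and the reduced hypothesis caps that number at one. The paper states this in two informal sentences, whereas you supply the rigorous topological justification (identifying crossings with transverse plane intersections and invoking the Jordan curve theorem), so your write-up is a careful elaboration rather than a different route.
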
 
\begin{proof}
    Since $i<j$ we have $c_i$ is on the left of $c_j$ on the top boundary. If $p_i$ and $p_j$ cross, they cross exactly once, which makes $c_i$ be above $c_j$ on the right boundary. On the other hand, if $(i,j)$ is an inversion, $p_i$ and $p_j$ must cross to make it happen. 
\end{proof}

\begin{prop}
    Let $y\in S_r$ and $y{\bf c}$ be the boundary flag. Then $(i,j)$ is an inversion in $w_0y$ if and only if in any generalized state $\mathfrak{s}$, the path $p_i$ and $p_j$ will cross odd many times.
\end{prop}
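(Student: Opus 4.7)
The plan is to prove the parity formula by a topological crossing--count argument, viewing the $r \times N$ rectangle as a topological disk. First I would verify that in any generalized state each colored path $p_k$ is a simple embedded arc in the rectangle. Inspection of \cref{table11} shows that at every vertex where the color $c_k$ appears, exactly two of the four incident edges carry that color, so $p_k$ does not branch; moreover each vertex has at most one such pair, so no vertex is visited twice. Its endpoints are the top-boundary edge at column $\lambda_k+r-k$ and the right-boundary edge at row $y(k)$.

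Next I would invoke the standard topological fact that, for two simple arcs in a closed disk whose four endpoints are distinct points on the boundary circle, the mod-$2$ count of transversal intersections equals $1$ if and only if the two pairs of endpoints interleave along the boundary. The transversal crossings of $p_i$ and $p_j$ occur precisely at the vertices of type $a_{2,1}$ and $a_{2,2}$, which by definition are the paper's ``crossings'', while the $a_{2,3}$ and $a_{2,4}$ patterns are tangential meetings which contribute $0\pmod 2$ to the intersection count after an arbitrarily small smoothing.

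I would then check the linking condition combinatorially. Traverse the boundary of the rectangle clockwise from the top-left corner: on the top edge (left-to-right) the start-endpoints of $p_1,\ldots,p_r$ appear in this order, since the columns $\lambda_1+r-1>\lambda_2+r-2>\cdots>\lambda_r$ place $c_1$ leftmost; on the right edge (top-to-bottom) the end-endpoints appear in the order $p_{y^{-1}(1)},\ldots,p_{y^{-1}(r)}$; the bottom and left edges carry no endpoints. For $i<j$ the two starts appear as (start $p_i$, start $p_j$), while the two ends appear as (end $p_i$, end $p_j$) iff $y(i)<y(j)$ and as (end $p_j$, end $p_i$) iff $y(i)>y(j)$. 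The four endpoints therefore interleave on the boundary precisely when $y(i)<y(j)$, which the paper has already observed is equivalent to $(i,j)$ being an inversion of $w_0y$. Combining these observations yields the claimed equivalence.

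The main obstacle is making the topological step rigorous in the purely combinatorial framework of the paper, since the paths are piecewise linear and meet tangentially at $a_{2,3}/a_{2,4}$ vertices. A direct combinatorial substitute is a sweep-line argument: slide a horizontal line from just below the top boundary down through each row, and track at each step the parity of the left-right order of $p_i$ and $p_j$ among the vertical edges still carrying those colors, together with a correction term recording which of the two (if any) has already exited to the right. A case check on the nine local configurations of \cref{table11} shows that this parity flips exactly once at each $a_{2,1}$ or $a_{2,2}$ vertex and by an even amount at each $a_{2,3}$ or $a_{2,4}$ vertex; comparing the initial configuration (top boundary) with the final one (right boundary) then recovers the linking condition and completes the proof.
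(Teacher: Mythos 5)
Your proposal is correct and takes essentially the same approach as the paper, which disposes of this proposition in one line (``Similar to the previous proof'') by the same observation that the parity of crossings governs whether the relative order of the two paths flips between the top boundary and the right boundary. Your write-up simply supplies the rigor the paper omits --- the simple-arc verification, the disk/interleaving lemma, and the sweep-line fallback are all sound and consistent with the paper's conventions (in particular the $a_{2,3}/a_{2,4}$ configurations indeed contribute $0 \bmod 2$).
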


\begin{proof}
    Similar to the previous proof. 
\end{proof}

\begin{conv}
    From now on we will fix a left-strict Gelfand-Tsetlin pattern $A$ and its associated flag $w_A$, by the result of \cref{thm:state}. 
 All the states will be required to have underlying Gelfand-Tsetlin pattern $A$.
\end{conv}

Given $\mathfrak{s} \in \mathfrak{S}^\text{red}_{\lambda}(y)$. Let $p_i,p_j$ be two paths in $\mathfrak{s}$. If they cross once and intersect at multiple vertices, we do a  {\bf closed adjustment} to move the crossing to the last (most downright) vertex that they intersect without changing the underlying Gelfand-Tsetlin pattern. This process does not change spins for the boundary edges. The picture below is an illustration of a closed adjustment assuming red $>$ blue.

\noindent
\begin{figure}[h!]
\begin{centering}
\includegraphics[width=15cm]{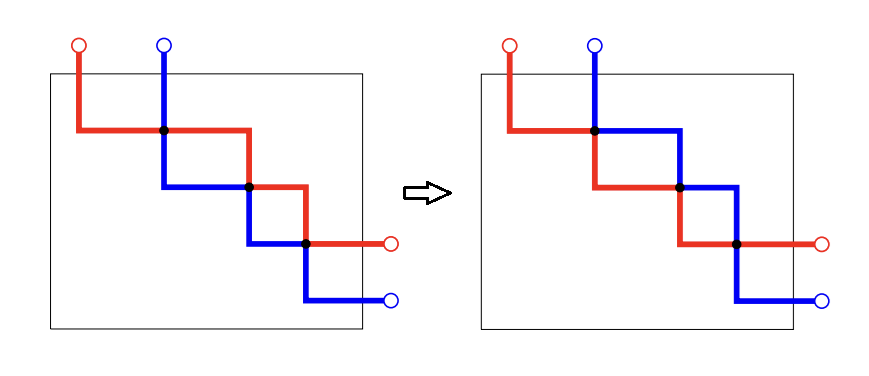}
\caption{Illustration of a closed adjustment, with red $>$ blue.}
\end{centering}
\end{figure}

In \cref{openclosed}, if we perform a closed adjustment to the open state, the result is the closed state.

\begin{lem}\label{lem:closedadj}
    Let $\mathfrak{s} \in \mathfrak{S}^\text{red}_{\lambda}(y)$. After performing a closed adjustment, the resulting state $\mathfrak{s}'$ is still in $\mathfrak{S}^\text{red}_{\lambda}(y)$. Moreover, any two paths cross after the adjustment iff they cross before.
\end{lem}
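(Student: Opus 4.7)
The closed adjustment is a local operation confined to the ``strip'' bounded by the trajectories of $p_i$ and $p_j$ between the original crossing $v_t$ and the final intersection $v_k$; outside this strip, the state is entirely unchanged. The plan is to describe the adjustment explicitly in terms of local pattern changes, then verify local admissibility, and finally verify the crossing claim.

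First, I would describe the adjustment. Within the strip, the intersections of $p_i$ and $p_j$ lying between $v_t$ and $v_k$ have their local patterns flipped: $v_t$ changes from a crossing pattern ($a_{2,1}$ or $a_{2,2}$) to a swap pattern ($a_{2,3}$ or $a_{2,4}$), $v_k$ changes from a swap to a crossing, and any intermediate intersection flips between $a_{2,3}$ and $a_{2,4}$ accordingly. Along the single-color segments connecting consecutive intersections of $p_i$ and $p_j$, the color labels $c_i$ and $c_j$ are exchanged where needed so the new configuration is consistent. This preserves the set of colored edges (hence the Gelfand-Tsetlin pattern) and the boundary spins.

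Second, I would verify local admissibility at every vertex of the strip. At intersection vertices of $p_i$ and $p_j$, the new patterns remain in $\{a_{2,1},a_{2,2},a_{2,3},a_{2,4}\}$, which are all admissible. At single-color vertices along the segments, swapping $c_i \leftrightarrow c_j$ preserves the pattern type ($b_2$, $c_1$, or $c_2$). In particular, no $b_1$ pattern is introduced, since the adjustment never converts a non-$b_1$ vertex into one with a single color going straight vertically. If a third path $p_a$ passes through the strip and meets $p_i$ at some vertex $u$ before the adjustment, the relabeling turns $u$ into an intersection of $p_a$ with $p_j$; the resulting local pattern is still in the admissible two-color set, because the admissibility depends only on the relative rank order of the two colors at $u$, and the configurations in Table~\ref{table11} are closed under the swap of higher-rank and lower-rank roles.

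Third, I would verify the crossing claim. For the pair $(p_i,p_j)$, the crossing is moved from $v_t$ to $v_k$, so they cross exactly once in both $\mathfrak{s}$ and $\mathfrak{s}'$. For any other pair $(p_a,p_b)$, the boundary flag $y$ is preserved by the adjustment, so by the proposition preceding this lemma the parity of their crossings (determined by whether $(a,b)$ is an inversion of $w_0 y$) is invariant; since the adjustment only relocates existing crossings within the strip rather than creating entirely new ones, each such pair still crosses at most once in $\mathfrak{s}'$, so $\mathfrak{s}' \in \mathfrak{S}^\text{red}_{\lambda}(y)$ and the ``cross iff cross'' condition holds. The main obstacle is the case analysis in step two, specifically when a third path $p_a$ traverses the strip: a local intersection may change partners from $p_i$ to $p_j$, and one must carefully track that the swap-or-crossing type at $u$ is preserved under the color relabeling so that the bookkeeping of crossing counts for the pairs $(p_a,p_i)$ and $(p_a,p_j)$ remains consistent with the boundary-flag constraints.
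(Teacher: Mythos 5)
Your overall strategy is the same as the paper's: the adjustment is local, preserves the Gelfand--Tsetlin pattern and the boundary flag, local admissibility survives the color relabeling (the configurations of \cref{table11} are indeed closed under exchanging the roles of the two colors), and the crossing parity of any pair is pinned down by the boundary flag. The one place where your argument does not go through as written is the inference ``since the adjustment only relocates existing crossings within the strip rather than creating entirely new ones, each such pair still crosses at most once in $\mathfrak{s}'$.'' Relocating a crossing can change \emph{which pair} it belongs to: a crossing of $(p_a,p_i)$ lying on the segment whose color is relabeled from $c_i$ to $c_j$ becomes, after the adjustment, a crossing of $(p_a,p_j)$. So even though no crossing is created in the aggregate, the pair $(p_a,p_j)$ can gain a crossing; if that pair already crossed once outside the affected segments, nothing in your stated reasoning prevents it from crossing twice in $\mathfrak{s}'$. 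You invoke the parity argument only afterwards, to get the ``iff,'' but by itself parity cannot rule out a count jumping from $0$ to $2$.

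The paper closes exactly this gap with a quantitative bound before applying parity: because $\mathfrak{s}$ is reduced, $(p_a,p_i)$ crosses at most once in total, hence at most one crossing is transferred onto the pair $(p_a,p_j)$ by the relabeling, and symmetrically at most one is transferred off it. The net change in the number of $(p_a,p_j)$-crossings therefore lies in $\{-1,0,+1\}$; since the boundary flag is unchanged, this number is fixed modulo $2$, so the change must be $0$. That single computation delivers both conclusions of the lemma at once --- reducedness of $\mathfrak{s}'$ and the ``cross after iff cross before'' statement --- and is the step you need to add.
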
 

\begin{proof}
From the definition, it is clear that a closed adjustment does not change the underlying Gelfand-Tsetlin pattern, hence $\mathfrak{s}'$ is a state for $\mathfrak{S}_{\lambda}(y)$ with Gelfand-Tsetlin pattern $A$. We now show that $\mathfrak{s}'$ is reduced. First, if $p_i$ and $p_j$ don't cross, the adjustment does nothing.

Assume that $p_i$ and $p_j$ cross. Since $\mathfrak{s}$ is reduced, they cross once. By the definition of closed adjustments, $p_i$ and $p_j$ still cross once after the adjustment.

Now fix $k\neq i,j$, we investigate how many times $p_k$ crosses $p_i$ after the adjustment (the inspection for $p_j$ is the same). For clarity, we will assume that the three paths $p_i,p_j,p_k$ are of colors red, blue, and green respectively. 

The adjustment will change a part of the blue path to red. Since $\mathfrak{s}$ is reduced, there is at most one crossing between blue and green on this part. Hence the effect of changing blue to red on this part, is either $+0$ or $+1$ number of crossings between red and green. Similarly, changing a part of the red paths to color blue has the effect of $-0$ or $-1$ the number of crossings between red and green. Therefore, the total change is $-1,0$ or $+1$ number of crossings. However, in the state $\mathfrak{s}'$, the parity of the number of crossings cannot change since the boundary flag is unchanged, i.e., the number of crossings is unchanged modulo 2. Hence the change of the number of crossings between red and green has to be $0$.

Clearly the adjustment will not affect the number of crossings between $p_k$ and $p_{\ell}$ if $k,\ell \notin \{i,j\}$. Hence we have shown that an adjustment does not affect how many times any two paths cross. Thus the state $\mathfrak{s}'$ is reduced, as desired.
\end{proof}

\noindent

\begin{lem}
    Let $\mathfrak{s} \in \mathfrak{S}^\text{red}_{\lambda}(y)$ be a reduced state such that any crossing occurs at the last time the two paths intersect. Then $\mathfrak{s}$ is a closed state.
\end{lem}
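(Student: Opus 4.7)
The plan is the following. Since $\mathfrak{s}$ is reduced, $b_1$ patterns are already excluded, so it suffices to rule out $a_{2,2}$ and $a_{2,4}$ patterns. I will argue by contradiction, assuming that some vertex $v=(r_v,c_v)$ carries an $a_{2,2}$ or $a_{2,4}$ pattern involving paths $p_a,p_b$ with $a<b$. The key common feature is that in both patterns the larger color $c_a$ enters $v$ from above and the smaller color $c_b$ enters $v$ from the left.

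The first step is to track, for each row $i$ between $1$ and $r_v$, the column labels $C_i(a)$ and $C_i(b)$ of the vertical edges occupied by $p_a$ and $p_b$ immediately above row $i$, and to show that their relative order reverses between row $1$ and row $r_v$. At row $1$ the top boundary gives $C_1(a)=\lambda_a+r-a>\lambda_b+r-b=C_1(b)$. At row $r_v$, the fact that $p_a$ enters $v$ from above gives $C_{r_v}(a)=c_v$, and the fact that $p_b$ arrives horizontally at $v$ from the left forces $p_b$ to have turned right at some vertex $(r_v,C)$ with $C\geq c_v+1$ (since column labels increase to the left), so $C_{r_v}(b)\geq c_v+1>C_{r_v}(a)$. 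The order has flipped.

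The second step is to locate the crossing forced by this flip. Let $i^{\ast}$ be the largest row with $C_{i^{\ast}}(a)>C_{i^{\ast}}(b)$; then $i^{\ast}<r_v$ and the order switches between rows $i^{\ast}$ and $i^{\ast}+1$. Write $C_a=C_{i^{\ast}}(a)$ and $C_b=C_{i^{\ast}}(b)$. For the column of $p_a$ below row $i^{\ast}$ to be strictly smaller than $C_b$ (which the flip requires), the path $p_a$ must move rightward through every intermediate vertex on row $i^{\ast}$ and reach $(i^{\ast},C_b)$ entering from the left. At $(i^{\ast},C_b)$, $p_b$ enters from above, so the admissible configurations are the crossing $a_{2,1}$ and the non-crossing $a_{2,3}$. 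A direct check shows that $a_{2,3}$ would leave $C_{i^{\ast}+1}(a)=C_b>C_{i^{\ast}+1}(b)$, contradicting the choice of $i^{\ast}$. Hence the pattern at $(i^{\ast},C_b)$ is an $a_{2,1}$ crossing of $p_a$ and $p_b$.

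Combining these facts yields the contradiction: $p_a$ and $p_b$ cross at $(i^{\ast},C_b)$ and intersect again at $v=(r_v,c_v)$, with the crossing strictly earlier than $v$. By the hypothesis that every crossing is the last intersection of its pair of paths, this later intersection cannot occur. Therefore no $a_{2,2}$ or $a_{2,4}$ vertex exists, and $\mathfrak{s}$ is closed. I expect the main technical hurdle to be the second step, namely verifying by local case analysis that the only way the column order can flip between consecutive rows is through an $a_{2,1}$ crossing at $(i^{\ast},C_b)$; once that is settled, the contradiction with the last-intersection hypothesis is immediate.
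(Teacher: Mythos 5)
Your proof is correct. It reaches the same conclusion as the paper but is organized quite differently: the paper argues directly, fixing a pair of paths and splitting into the cases ``they cross'' versus ``they do not cross,'' and then asserting a geometric containment (the smaller color stays in the up-right region bounded by the larger one, so every intersection before the final crossing is an $a_{2,3}$ and the crossing itself is an $a_{2,1}$). You instead argue by contradiction from a single forbidden vertex: you observe that both $a_{2,2}$ and $a_{2,4}$ place the larger color entering from above and the smaller from the left, track the column coordinates of the two paths row by row, note that the order of the columns must flip between the top boundary and the bad vertex, and use a discrete intermediate-value argument to pin down an $a_{2,1}$ crossing strictly above the bad vertex, contradicting the last-intersection hypothesis. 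The two arguments rest on the same underlying fact --- the larger color starts to the left, so being to the right of the smaller color forces an earlier crossing --- but your column-tracking step is an explicit justification of exactly the region-containment claim that the paper leaves as an assertion, so your version is longer but more self-contained. One small redundancy: once you know the flip forces $C_{i^{\ast}+1}(a)<C_b$, the $a_{2,3}$ alternative at $(i^{\ast},C_b)$ is already excluded, so the separate local case check there is not strictly needed; it does no harm.
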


\begin{proof}

Recall that a state is closed if there are no $a_{2,2}$ or $a_{2,4}$ patterns at any vertex. For any $i<j$, we will investigate all the intersections between $p_i$ and $p_j$. 

If $p_i$ and $p_j$ do not cross, then the color $c_j$ is above $c_i$ on the right boundary. The paths may still intersect at several vertices. However, the no-crossing condition ensures that the path $p_j$ is entirely in the up-right region bounded by the path $p_i$, and all the intersection must be an $a_{2,3}$ pattern with path $p_i$ going in from left and going out to bottom.

If $p_i$ and $p_j$ cross once, the crossing must be the path $p_i$ going in from left and going out to right, which is an $a_{2,1}$ pattern. As the path $p_i$ crosses at the last vertex that it intersects $p_j$, all the previous intersections are in $a_{2,3}$ patterns.
\end{proof}

\noindent
\begin{prop}\label{prop:wstate}
    There exists a closed state with underlying Gelfand-Tsetlin pattern $A$ for the colored model $\mathfrak{S}^\bullet_{\lambda}(w_A)$. In other words, $\mathfrak{S}^\bullet_\lambda(w_A;A)$ is nonempty.
\end{prop}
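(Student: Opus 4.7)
The plan is to construct the required closed state by starting from the unique open state $\mathfrak{s}_A \in \mathfrak{S}^\circ_\lambda(w_A;A)$ provided by Proposition \ref{thm:BBBG1} and transforming it via a finite sequence of closed adjustments. Every open state is automatically reduced by the corollary following Proposition \ref{prop:closedd}, so $\mathfrak{s}_A \in \mathfrak{S}^\text{red}_\lambda(w_A;A)$, and by Lemma \ref{lem:closedadj} every closed adjustment keeps the state inside $\mathfrak{S}^\text{red}_\lambda(w_A;A)$ while preserving both the Gelfand--Tsetlin pattern $A$ and the boundary flag $w_A$ (and in particular the set of crossing pairs).

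The iterative procedure is: while some crossing pair $(p_i,p_j)$ in the current reduced state has its crossing vertex strictly before the last vertex at which $p_i$ and $p_j$ intersect, perform a closed adjustment on that pair. By definition of a closed adjustment, the crossing of $p_i$ and $p_j$ moves strictly down-right, and by Lemma \ref{lem:closedadj} the output is again a reduced state in $\mathfrak{S}^\text{red}_\lambda(w_A;A)$. Once the procedure halts, every crossing lies at the last intersection of its pair, so the lemma immediately preceding Proposition \ref{prop:wstate} identifies the resulting state as closed. Because the Gelfand--Tsetlin pattern is still $A$ and the right-boundary flag is still $w_A$, this state lies in $\mathfrak{S}^\bullet_\lambda(w_A;A)$, proving nonemptyness.

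The main obstacle is termination. A closed adjustment on $(i,j)$ reroutes the segments of $p_i$ and $p_j$ between the old and new $(i,j)$ crossing vertices, and this can displace the vertex at which a third path $p_k$ crosses $p_i$ or $p_j$, potentially introducing a new bad pair $(i,k)$ or $(j,k)$. To control this I would introduce a monovariant such as
\[
\Sigma(\mathfrak{s}) \;=\; \sum_{(i,j)\ \text{crossing pair}} \bigl(\operatorname{row}(v_{ij}) + \operatorname{col}(v_{ij})\bigr),
\]
where $v_{ij}$ denotes the crossing vertex of $p_i$ and $p_j$, and verify through a case analysis of how the crossings of $(i,k)$ and $(j,k)$ can shift when $(i,j)$ is adjusted that $\Sigma$ strictly increases at each step. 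Since $\Sigma$ is bounded above by a quantity depending only on $r$ and $N$, the procedure halts in finitely many steps. An alternative, more structural route would be to process crossing pairs in a carefully chosen order (for instance, by the position of the terminal color on the right boundary, or by nesting of paths) so that once a pair's crossing has reached its last intersection it is not disturbed by subsequent adjustments; this should reduce termination to a straightforward induction on the number of bad pairs.
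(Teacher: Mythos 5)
Your proposal follows essentially the same route as the paper: start from the unique open state $\mathfrak{s}_A$ given by \cref{thm:BBBG1}, apply closed adjustments until every crossing sits at the last intersection of its pair, and invoke \cref{lem:closedadj} together with the lemma immediately preceding \cref{prop:wstate} to conclude that the result is a closed state with the same Gelfand--Tsetlin pattern and flag. The termination issue you isolate is genuine, but the paper dispatches it with the same one-line observation that each nontrivial adjustment moves a crossing down-right while the set of crossing pairs is unchanged, so your proposed monovariant is, if anything, more careful than the published argument rather than a deviation from it.
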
 

\begin{proof}
    By the result of \cite{BBBG}, there is an open state $\mathfrak{s}$ for $\mathfrak{S}^\circ_{\lambda}(w_A)$. If there are any two paths $p_i$ and $p_j$ that intersect multiple times, do a closed adjustment for $p_i$ and $p_j$. This process must terminate in finitely many steps, since every nontrivial adjustment moves a crossing downright, and the set of crossing pairs does not change. When there are no adjustments to make, we arrive at a closed model state by the previous Lemma. The state has flag $w{\bf c}$ and underlying Gelfand-Tsetlin pattern $A$ since closed adjustments do not change them.
\end{proof}

In fact, the proof of \cref{prop:wstate} can be generalized to the following result:

\noindent
\begin{prop}\label{prop:adjust}
    Let $\mathfrak{s} \in \mathfrak{S}^\text{red}_{\lambda}(y)$. After doing all possible closed adjustments, we end with a closed state in $\mathfrak{S}^\bullet_{\lambda}(y)$.  \hfill \qed
\end{prop}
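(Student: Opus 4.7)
The plan is to mirror the strategy already used in the proof of \cref{prop:wstate}, but starting from an arbitrary reduced state rather than the canonical open one. There are three pieces to establish: (a) the procedure of repeatedly applying closed adjustments terminates, (b) when it terminates the resulting state has every crossing at the last intersection of the two paths involved, and (c) the flag and Gelfand-Tsetlin pattern are preserved throughout.

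First I would argue termination. By \cref{lem:closedadj}, each closed adjustment keeps the state in $\mathfrak{S}^\text{red}_{\lambda}(y)$ and does not alter which pairs of paths cross. Moreover, by construction a nontrivial closed adjustment acts on one crossing pair $(i,j)$ by moving the unique crossing of $p_i$ and $p_j$ strictly to a later (more downright) vertex among the common intersection vertices of those two paths. Define a monovariant on the state by summing, over all crossing pairs $(i,j)$, the lexicographic coordinates of the crossing vertex in the finite grid. Each nontrivial adjustment strictly increases this monovariant, while all other crossings are left in place (their vertices are unchanged because only the segments of $p_i$ and $p_j$ between their first and last intersection get modified, and by the parity argument inside \cref{lem:closedadj} no other pair of paths gains or loses a crossing). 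Since the monovariant takes values in a finite set, the procedure must halt after finitely many steps.

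Next, when the procedure halts, I claim every crossing occurs at the last vertex where the two crossing paths intersect. Indeed, if $p_i$ and $p_j$ cross at some vertex $v$ that is not the last vertex where they intersect, then by the definition of a closed adjustment we can still apply one to the pair $(i,j)$, contradicting that no adjustments remain. The lemma immediately preceding \cref{prop:wstate} then applies: a reduced state in which every crossing is at the last intersection of its two paths is a closed state, i.e., it contains no $a_{2,2}$, $a_{2,4}$, or $b_1$ patterns.

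Finally, I would observe that the output lies in $\mathfrak{S}^\bullet_{\lambda}(y)$ (with the same underlying pattern $A$): each closed adjustment leaves both the boundary spins and the Gelfand-Tsetlin pattern unchanged by construction, so the right-boundary flag remains $y\mathbf{c}$ and the top-row columns of colored vertical edges in each row are unchanged. Combined with the closedness established above, this gives the proposition. The only delicate point—and the one the strategy hinges on—is the termination monovariant; everything else is formal, and the key input is that the crossing-pair structure is an invariant of closed adjustments, already recorded in \cref{lem:closedadj}.
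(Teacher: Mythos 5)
Your proposal is correct and follows essentially the same route as the paper: the paper simply observes that the argument for \cref{prop:wstate} generalizes, namely iterate closed adjustments, conclude termination because each nontrivial adjustment moves a crossing downright while \cref{lem:closedadj} keeps the state reduced with the same crossing pairs, then invoke the preceding lemma to see the terminal state is closed, and note that the flag and Gelfand--Tsetlin pattern are untouched. Your explicit monovariant is just a formalization of the paper's ``moves a crossing downright'' remark (though your parenthetical claim that the crossing vertices of \emph{other} pairs are literally unchanged is slightly stronger than what \cref{lem:closedadj} records, which only controls whether pairs cross, not where).
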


To do the rest of the proofs, it might be good to record the properties of the Bruhat order on permutation groups. They are summarized in the following proposition:

\begin{prop}{\rm(}{\cite{BB}}{\rm)}\label{bruhat}
    Let $S_r$ be the permutation group on $r$ elements. Then,
    \begin{enumerate}
    \item There is a longest element $w_0$ such that $w\mapsto ww_0$ and $w\mapsto w_0w$ are both anti-automorphisms of the Bruhat order.

    \item If an element $\sigma$ of $S_n$ is denoted by $\sigma(1),...,\sigma(n)$, and $t=(i\,j)$ is a transposition. Then $\sigma t$ is obtained by changing $\sigma(i)$ and $\sigma(j)$, $t\sigma$ is obtained by changing the numbers $i$ and $j$.

    \item The length $\ell(\sigma)$ equals the number of inversions of $\sigma$.

    \item If $x< y$ in the Bruhat order then there exists $x< u_1< u_2< ...< y$ such that every step the length increase by $1$ and $u_i=u_{i-1}t_i$ for some transposition $t_i$.
\end{enumerate}
\end{prop}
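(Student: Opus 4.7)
All four items are standard results on Bruhat order in the symmetric group (cf.\ Björner--Brenti). The plan is to sketch self-contained combinatorial proofs using one-line notation and inversion counts, rather than invoking general Coxeter-group theory where possible.

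First I would set $w_0 = (r,\,r-1,\,\ldots,\,1)$ in one-line notation; an inversion count gives $\ell(w_0) = \binom{r}{2}$, the maximum possible in $S_r$, so $w_0$ is the unique longest element. For part (1), I would observe that a pair $(i,j)$ with $i < j$ is an inversion of $w$ if and only if $(w(i),w(j))$ is \emph{not} an inversion of $w_0 w$, and similarly for $w w_0$; this yields $\ell(w_0 w) = \ell(w w_0) = \binom{r}{2} - \ell(w)$, so both maps are length-reversing involutions. To upgrade this to order-reversal, I would invoke the tableau (rank) criterion: $u \leq v$ if and only if for every $(k,m)$ the count $\#\{i \leq k : u(i) \geq m\}$ is at most the analogous count for $v$. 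These rank numbers transform consistently under multiplication by $w_0$, giving $u \leq v \iff w_0 v \leq w_0 u$, and analogously on the right.

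Part (2) is a direct computation: from $(\sigma t)(k) = \sigma(t(k))$ one sees that $\sigma t$ swaps the entries in positions $i$ and $j$ of the one-line notation, while from $(t \sigma)(k) = t(\sigma(k))$ one sees that $t \sigma$ swaps the occurrences of the values $i$ and $j$. Part (3) would follow by induction on $\mathrm{inv}(\sigma)$: for $\sigma \neq e$ there exists $i$ with $\sigma(i) > \sigma(i+1)$, so $\sigma s_i$ has one fewer inversion; a reduced word for $\sigma$ of length $\mathrm{inv}(\sigma)$ is then assembled inductively, and no shorter word can exist because each simple transposition changes the inversion count by exactly $\pm 1$.

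Finally, for part (4) I would use the chain property of Bruhat order, proved by induction on $\ell(y) - \ell(x)$, with the base case being the definition of a covering relation. For the inductive step, I would need to produce a transposition $t$ with $x \lessdot x t \leq y$; the existence of such a $t$ is where the Coxeter-theoretic strong exchange condition (or, equivalently, the subword property applied to a fixed reduced word of $y$) is essential, and I expect this to be the main obstacle of the proposition. Once such $t$ is found, induction applied to the interval $[xt, y]$ delivers the desired saturated chain $x \lessdot u_1 \lessdot u_2 \lessdot \cdots \lessdot y$.
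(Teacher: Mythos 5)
The paper does not prove this proposition at all: it is stated as a collection of standard facts with a citation to Bj\"orner--Brenti, so there is no in-paper argument to compare yours against. Your sketch is a correct outline of the standard proofs, and indeed essentially reproduces the arguments from that reference: the inversion-complementation computation showing $\ell(w_0w)=\ell(ww_0)=\binom{r}{2}-\ell(w)$, the tableau (rank) criterion to upgrade length reversal to order reversal for part (1), direct computation for part (2), induction on the inversion count for part (3), and the chain property via the strong exchange condition for part (4). Two small remarks. In part (1) your phrasing ``$(w(i),w(j))$ is not an inversion of $w_0w$'' is slightly off: for $w_0w$ the complemented pair is $(i,j)$ itself (since $w_0w(i)=r+1-w(i)$), while for $ww_0$ it is the reversed position pair $(r+1-j,\,r+1-i)$; the count still comes out to $\binom{r}{2}-\ell(w)$ either way, but the bookkeeping differs between left and right multiplication. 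For part (4), you correctly identify that producing a transposition $t$ with $x\lessdot xt\leq y$ is the real content and that it rests on the strong exchange condition or the subword property; since you do not prove that input, your argument is a reduction to a cited theorem rather than a self-contained proof --- which is an entirely reasonable standard here, given that the paper itself treats the whole proposition as a citation.
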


Also, we record the following lemma for clarity:
\begin{lem}
    Let $y\in S_r$ and $t=(i,j)$ be a transposition. Then the flag $(yt){\bf c}$ is obtained by switching the colors $c_i$ and $c_j$ in $y{\bf c}$, while the flag $(ty){\bf c}$ is obtained by switching the colors in row $i$ and row $j$ in $y{\bf c}$.
\end{lem}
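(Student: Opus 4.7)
The plan is to unwind the definition of the flag and perform a direct bookkeeping computation. Recall that $w\mathbf{c} = (c_{w^{-1}(1)}, c_{w^{-1}(2)}, \ldots, c_{w^{-1}(r)})$, so by definition the entry in row $k$ is $c_{w^{-1}(k)}$; equivalently, color $c_i$ occupies row $w(i)$. The two assertions of the lemma are then two applications of the identity $(ab)^{-1} = b^{-1}a^{-1}$, together with the fact that a transposition $t$ is an involution.

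For the first assertion, I would compute $(yt)^{-1}(k) = t^{-1}y^{-1}(k) = t(y^{-1}(k))$. Thus the entry in row $k$ of $(yt)\mathbf{c}$ agrees with that of $y\mathbf{c}$ except precisely at the two rows where $y^{-1}(k)\in\{i,j\}$, namely $k=y(i)$ and $k=y(j)$. At those rows the entries $c_i$ and $c_j$ are exchanged, which is exactly the operation of swapping the two colors $c_i$ and $c_j$ within the vector $y\mathbf{c}$.

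For the second assertion, I would compute $(ty)^{-1}(k) = y^{-1}(t^{-1}(k)) = y^{-1}(t(k))$. For $k\notin\{i,j\}$ this equals $y^{-1}(k)$, so the entry is unchanged; for $k=i$ it becomes $c_{y^{-1}(j)}$, i.e.\ the entry previously occupying row $j$; and for $k=j$ it becomes $c_{y^{-1}(i)}$, the entry previously occupying row $i$. Hence $(ty)\mathbf{c}$ is obtained from $y\mathbf{c}$ by exchanging rows $i$ and $j$.

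There is no real obstacle; the lemma is a bookkeeping statement recording the slogan that right multiplication by $t$ permutes the \emph{labels} $c_i,c_j$ while left multiplication by $t$ permutes the \emph{positions} (rows) $i,j$. Its purpose is to make available, in the later Bruhat-order arguments of \cref{prop:y>w} and \cref{prop:y<w}, the ability to pass between boundary flags differing by a single transposition without having to reargue from the definition of $w\mathbf{c}$ each time.
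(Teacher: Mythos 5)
Your proof is correct and is essentially the same bookkeeping argument the paper gives: both unwind the definition of the flag (color $c_i$ sits in row $y(i)$, equivalently row $k$ holds $c_{y^{-1}(k)}$) and use the standard fact that right multiplication by $t$ swaps values while left multiplication swaps positions. Your version is just slightly more explicit, phrasing the computation via $(yt)^{-1}$ and $(ty)^{-1}$ rather than citing the one-line-notation description of $\sigma t$ and $t\sigma$.
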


\begin{proof}
    By definition, the color $c_i$ is at row $y(i)$ on the right boundary.  In $S_r$, suppose that $t=(i\,j)$. Then $yt$ is changing $w(i)$ and $w(j)$ while $ty$ is changing $i$ and $j$ in the outputs. Hence the result follows. 
\end{proof}

\noindent
\begin{prop}\label{prop:y>w}
    Let $y\geq w_A$. Then there exists a unique closed state with underlying Gelfand-Tsetlin pattern $A$ for the colored model $\mathfrak{S}^\bullet_{\lambda}(y)$. In other words, $\mathfrak{S}^\bullet_{\lambda}(y;A)$ has a single element.
\end{prop}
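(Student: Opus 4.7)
The plan is to induct on $d = \ell(y) - \ell(w_A) \geq 0$. The base case $d = 0$ is immediate from Proposition~\ref{prop:wstate}. For the inductive step, by Proposition~\ref{bruhat}(4) I choose $x$ with $w_A \leq x \lessdot y$ in the Bruhat order, so $y = xt$ for a transposition $t = (a,b)$; the standard description of Bruhat covers in $S_r$ forces $a<b$, $x(a)<x(b)$, together with the \emph{no-intermediate} condition that no index $k$ with $a<k<b$ satisfies $x(a)<x(k)<x(b)$. By the induction hypothesis there is a unique closed state $\mathfrak{s}_x \in \mathfrak{S}^\bullet_\lambda(x;A)$.

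Because $(a,b)$ is a crossing pair for $x$, Proposition~\ref{prop:closedd} tells us that $p_a$ and $p_b$ in $\mathfrak{s}_x$ cross exactly once, at their final intersection vertex $v$, with local pattern $a_{2,1}$. I define \emph{adjustment~2} as the surgery that flips the pattern at $v$ from $a_{2,1}$ to $a_{2,3}$ and swaps the colors $c_a \leftrightarrow c_b$ on the downstream portions of the two paths; call the result $\mathfrak{s}_y$. The verifications that $\text{GTP}(\mathfrak{s}_y)=A$ and that the boundary flag of $\mathfrak{s}_y$ is $y\mathbf{c}$ are immediate, since the relabeling leaves the set of colored vertical edges untouched and exchanges the terminal rows of $p_a$ and $p_b$ on the right edge --- which is exactly the effect of right multiplication by $t_{a,b}$ on $x$. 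The main obstacle is showing that $\mathfrak{s}_y$ remains a closed state, i.e., no pattern $a_{2,2}$, $a_{2,4}$, or $b_1$ appears at any vertex lying on the relabeled tails. At such a vertex whose other color $c_k$ has $k<a$ or $k>b$, the relative order of $c_k$ with $\{c_a, c_b\}$ is preserved by the swap, so closed-admissibility transfers automatically; the delicate case is $a<k<b$, where the color hierarchy flips. Here I would invoke the no-intermediate condition $x(k)\notin(x(a), x(b))$, combined with the fact that paths in a closed state move monotonically right-and-down, to argue that $p_k$ cannot meet the relabeled tail in the orientation that would produce a forbidden pattern; a short case analysis in $x(k)<x(a)$ versus $x(k)>x(b)$ handles both subcases.

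For uniqueness, the surgery is reversible. Given any $\mathfrak{s}_y \in \mathfrak{S}^\bullet_\lambda(y;A)$, the pair $(a,b)$ is no longer a crossing pair, so by Proposition~\ref{prop:closedd} the paths $p_a, p_b$ cannot cross in $\mathfrak{s}_y$; a short geometric/parity argument comparing their start columns and end rows (inherited from $\lambda+\rho$ and $y$) shows they must still share at least one intersection, whose final occurrence is then forced to carry pattern $a_{2,3}$. Flipping that vertex back to $a_{2,1}$ and relabeling the downstream portions produces a closed state in $\mathfrak{S}^\bullet_\lambda(x;A)$, which by the inductive uniqueness must equal $\mathfrak{s}_x$. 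Hence every $\mathfrak{s}_y \in \mathfrak{S}^\bullet_\lambda(y;A)$ arises from adjustment~2 applied to $\mathfrak{s}_x$, giving uniqueness.
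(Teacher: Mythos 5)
Your existence argument follows the paper's route (induct up the Bruhat order along covers, then uncross $p_a,p_b$ at their last crossing), but it claims more than can be shown at the surgery step, and this is a genuine gap. After you swap the downstream tails at $v$, a third path $p_k$ with $a<k<b$ that meets one of the relabeled tails has its color hierarchy flipped: an $a_{2,3}$ intersection of $p_k$ with the old tail of $p_b$ (colors $c_k>c_b$) becomes, after that tail is relabeled $c_a>c_k$, exactly the forbidden pattern $a_{2,4}$, and a crossing becomes $a_{2,2}$. The cover condition $x(k)\notin(x(a),x(b))$ does not prevent $p_k$ from meeting the relabeled tails; it only controls \emph{crossing parities}. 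The paper therefore proves something weaker and sufficient: the surgered state is \emph{reduced} (no pair crosses twice), via the quantitative argument that each pair's crossing count changes by at most one while its parity is pinned by the boundary flag, so a jump from $1$ to $2$ would force $\ell(y)\geq\ell(x)+2$. It then converts the reduced state to a closed one by \cref{prop:adjust}. Your ``short case analysis'' is not carried out, and the statement it is meant to establish (that $\mathfrak{s}_y$ is already closed) is not true in general; you need the extra pass of closed adjustments.

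The uniqueness half has a second, independent gap. Your reverse surgery requires $p_a$ and $p_b$ to still intersect in an arbitrary $\mathfrak{s}_y\in\mathfrak{S}^\bullet_\lambda(y;A)$, but two paths forming a non-crossing pair need not meet at all (already for $r=2$, $\lambda=(1,0)$, $y=s_1$ the two paths are disjoint), so the ``short geometric/parity argument'' does not exist in the generality you need; and even when they do meet, flipping the last intersection back and relabeling runs into the same intermediate-color problem, so you cannot conclude the result lands in $\mathfrak{S}^\bullet_\lambda(x;A)$. The paper avoids both issues by proving uniqueness with a \emph{separate downward} induction from $w_0$ along \emph{simple} reflections $s_i$: swapping the adjacent colors $c_i,c_{i+1}$ preserves their order relative to every other color, so closedness is preserved verbatim, and at $w_0$ the closed state is trivially unique. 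You should either adopt that scheme or supply the missing intersection and closedness arguments; as written, uniqueness is not established.
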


\begin{proof}
For existence, we will inductively construct a closed state for $y$ by increasing length from $w_A$ to $y$. For the base case: by \cref{prop:wstate}, $\mathfrak{S}^\bullet_{\lambda}(w_A;A)$ is nonempty.

Suppose that $y>w_A$. By the induction hypothesis and the Bruhat order on $S_r$, we can find a transposition $t=(i,j)\in S_r$ such that $y>yt \geq w_A$, $\ell(y)=\ell(yt)+1$, and there exists a closed state $\mathfrak{s} \in \mathfrak{S}^\bullet_{\lambda}(yt;A)$. Assume $i<j$ as usual. Since $y>yt$, in $yt$ we have $yt(i)<yt(j)$, i.e., the color $c_i$ is above $c_j$ and the paths $p_i$ and $p_j$ cross in the state $\mathfrak{s}.$ Moreover, switching $yt(i)$ and $yt(j)$ will increase the length of $yt$ by $1$.

We do an adjustment (this is ``adjustment 2" in the introduction) for the paths $p_i$ and $p_j$: Since $\mathfrak{s}$ is a closed state, the paths cross at the last vertex that they intersect. We swap the paths after this vertex, which gives us a state $\mathfrak{s}'$ for the model $\mathfrak{S}^\bullet_{\lambda}(y)$. In $\mathfrak{s}'$, the paths $p_i$ and $p_j$ don't cross any more.

Now we will use proof by contradiction to show that $\mathfrak{s}'$ is still a reduced state. The idea is that, with the assumption that $\ell(y)=\ell(yt)+1$, the color $c_i$ and $c_j$ are ``adjacent", so changing colors in paths $p_i$ and $p_j$ won't affect the number of crossings with paths in other colors.

Assume that it is not, then there exist two paths that cross more than once. As in the proof of the previous lemma, the adjustment will change the number of crossings between two paths by $+1,0,$ or $-1$, hence for the state $\mathfrak{s}'$ to be not reduced, it must be in the case that the number of crossings between two paths increase from $1$ to $2$ (since in the reduced $\mathfrak{s}$, the number of crossings between two paths is $0$ or $1$). However, this will increase the length once again, hence contradicting the assumption that $\ell(y)=\ell(yt)+1$. More precisely, there are four cases:

\begin{enumerate}
    \item Paths $p_j$ and $p_k$ cross once in $\mathfrak{s}$, but cross twice in $\mathfrak{s}'$, and $j<k$. Thus in $yt$, $yt(j)<yt(k)$; while in $y$, $y(j)>y(k)$, contradicting to $\ell(y)=\ell(yt)+1$.

    \item Paths $p_j$ and $p_k$ cross once in $\mathfrak{s}$, but cross twice in $\mathfrak{s}'$, and $k<j$. Thus in $yt$, $yt(k)<yt(j)$; while in $y$, $y(k)>y(j)$, contradicting to $\ell(y)=\ell(yt)+1$.

    \item Paths $p_i$ and $p_k$ cross once in $\mathfrak{s}$, but cross twice in $\mathfrak{s}'$, and $k>i\,$. Similar to the previous cases.

    \item Paths $p_j$ and $p_k$ cross once in $\mathfrak{s}$, but cross twice in $\mathfrak{s}'$, and $k<i$. Similar to the previous cases.

\end{enumerate}

In all cases, we have a contradiction. Hence $\mathfrak{s}' \in \mathfrak{S}^\text{red}_{\lambda}(y;A)$. By \cref{prop:adjust}, the reduced state $\mathfrak{s}'$ can be adjusted to a closed state for $\mathfrak{S}^\bullet_{\lambda}(y)$.

For uniqueness, suppose that $\mathfrak{s}_1,\mathfrak{s}_2$ are two closed states for the colored model $\mathfrak{S}^\bullet_{\lambda}(y)$ with underlying Gelfand-Tsetlin pattern $A$, then $\mathfrak{s}_1=\mathfrak{s}_2$.

By Theorem 3, $y\geq w_A$. We prove the statement inductively by decreasing the length of the permutation $y$ from $\ell(w_0)$ to $\ell(w_A)$. If the flag is $w_0{\bf c}$, it is clear that there is a unique closed state: different paths can not cross.

Assume the proof is done for all permutations of length $\geq k$ and we now look at permutation $ys_i$, where $ys_i\geq w_A$ is of length $k-1$ and $y$ is of length $\geq k$. Since $ys_i$ has smaller length, in the flag $(ys_i){\bf c}$, the color $c_i$ is above the color $c_{i+1}$.
    
Let $\mathfrak{s} \in \mathfrak{S}^\bullet_{\lambda}(ys_i;A)$. The paths $p_i$ and $p_{i+1}$ must cross at the last vertex where they intersect. Now, we can alter the two paths $p_i$ and $p_{i+1}$ to make them not cross, and this will form a state for $\mathfrak{S}^\bullet_{\lambda}(y)$. This state is closed since we are only changing paths for $c_i$ and $c_{i+1}$. Each of the two paths might intersect with other paths, but switching between $c_i$ and $c_{i+1}$ in such an intersection won't affect the order relation with other colors (for example, if $c_i>c_k$ then $c_{i+1}>c_k$), so the state is still a closed state. 

By the induction hypothesis, the closed state for $\mathfrak{S}^\bullet_{\lambda}(y)$ is unique, hence the closed state for $\mathfrak{S}^\bullet_{\lambda}(ys_i)$ is unique for colors other than $c_i,c_{i+1}$. Meanwhile, the state for two colors $c_i,c_{i+1}$ is also unique. The proof is done.

\end{proof}

\noindent
\begin{prop}\label{prop:y<w}
Let $y\in S_r$ such that $y\ngeq w_A$. Then there does not exist a closed state with underlying Gelfand-Tsetlin pattern $A$ for the model $\mathfrak{S}^\bullet_{\lambda}(y)$. In other words, $\mathfrak{S}^\bullet_{\lambda}(y;A)$ is empty.
\end{prop}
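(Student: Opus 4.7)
The plan is to prove the contrapositive: if $\mathfrak{S}^\bullet_\lambda(y; A)$ contains some state $\mathfrak{s}$, then $y \geq w_A$. The idea is to invert the construction of \cref{prop:y>w} by producing a descending Bruhat chain
\[
y = y_0 > y_1 > y_2 > \cdots > y_m = w_A
\]
together with closed states $\mathfrak{s}_k \in \mathfrak{S}^\bullet_\lambda(y_k; A)$, obtained by iteratively applying the reverse of adjustment 2 to the current state.

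At each step, assuming $y_k \neq w_A$, I would locate a non-crossing pair of paths $p_i, p_j$ (with $i<j$ and $y_k(i) > y_k(j)$) which nonetheless intersect in $\mathfrak{s}_k$; such an intersection is necessarily an $a_{2,3}$ pattern. At the last such intersection, swap the tails of $p_i$ and $p_j$ beyond the vertex, converting the local $a_{2,3}$ into an $a_{2,1}$. By choosing the transposition $(i,j)$ to be a Bruhat cover of $y_k$, i.e.\ $\ell(y_k (i,j)) = \ell(y_k) - 1$, one obtains $y_{k+1} = y_k(i,j) < y_k$; the resulting reduced state can then be brought to a closed state in $\mathfrak{S}^\bullet_\lambda(y_{k+1}; A)$ via \cref{prop:adjust}. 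Admissibility at the vertices touched by the swap follows by the same color-adjacency argument as in \cref{prop:y>w}: the cover condition on $(i,j)$ prevents any intermediate color from obstructing the relabeling of path tails.

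The chain terminates at a state $\mathfrak{s}_m$ with the property that no non-crossing pair of paths intersects — in other words, every intersection in $\mathfrak{s}_m$ is between a pair that crosses (with crossing at its last intersection, since $\mathfrak{s}_m$ is closed). I would then apply open adjustments (the reverse of the closed adjustments of \cref{prop:adjust}) to relocate each crossing from the last intersection of its pair to the first; this preserves the flag $y_m$ and the Gelfand--Tsetlin pattern, while the remaining post-crossing intersections of crossing pairs become $a_{2,4}$ patterns. Because $\mathfrak{s}_m$ has no non-crossing intersections to start with, the transformed state uses only $a_{2,1}$ and $a_{2,4}$ at intersection vertices, and therefore lies in $\mathfrak{S}^\circ_\lambda(y_m; A)$. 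By \cref{thm:BBBG1}, the flag of an open state with pattern $A$ must be $w_A$, forcing $y_m = w_A$; combined with the descending chain this gives $y \geq w_A$.

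The main obstacle is guaranteeing that a Bruhat-cover reverse adjustment is available whenever $y_k \neq w_A$: one must show that among all non-crossing pairs whose paths intersect in $\mathfrak{s}_k$, at least one corresponds to a cover transposition of $y_k$. This parallels the existence argument in \cref{prop:y>w}, where the cover hypothesis was essential for admissibility of adjustment 2, and should follow from a careful local analysis of the $a_{2,3}$ vertices, choosing the innermost inversion relative to $y_k$.
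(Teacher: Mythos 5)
Your outline follows the paper's own proof of \cref{prop:y<w} almost exactly: starting from a closed state for $y$ with pattern $A$, repeatedly pick a pair $p_i,p_j$ ($i<j$, $y(i)>y(j)$) that intersects without crossing, force a crossing at their last intersection to descend to $y(i,j)<y$, and once every intersecting pair crosses, slide each crossing to the first intersection to produce an open state, whose flag must be $w_A$ by \cref{thm:BBBG1}; the descending chain then gives $y\geq w_A$. The one place you diverge --- and the ``main obstacle'' you flag --- is the insistence that each step be a Bruhat cover, and this is a self-imposed and unnecessary condition. For the Bruhat descent you only need $y(i,j)<y$, which holds for \emph{any} transposition with $i<j$ and $y(i)>y(j)$, cover or not, and transitivity of the Bruhat order closes the argument. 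For admissibility, the cover hypothesis of \cref{prop:y>w} is not needed here because the adjustment goes in the opposite direction: the dangerous scenario is a third path $p_k$ whose crossing count with $p_i$ or $p_j$ jumps from $1$ to $2$, and (as in \cref{lem:closedadj}) the count changes by at most one while its parity is pinned by the boundary flag; the only case where the parity of the $(k,i)$ or $(k,j)$ crossings could flip is $i<k<j$ with $p_k$ crossing both $p_i$ and $p_j$, which would force $y(i)<y(k)<y(j)$, contradicting $y(i)>y(j)$. (In \cref{prop:y>w} one uncrosses a pair with $y(i)<y(j)$, so that middle case is genuinely possible and the cover condition is what excludes it --- hence the asymmetry.) Dropping the cover requirement also removes the need to prove that a cover transposition is realized among the intersecting non-crossing pairs; the loop simply terminates when no such pair remains, exactly as in the paper.
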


\begin{proof}
    Let $X$ be the subset of $S_r$ consisting of the $y$'s such that $\mathfrak{S}^\bullet_{\lambda}(y;A)$ is nonempty. We want to show that if $y\in X$ then $y\geq w_A$.
    
    Let $y\in X$ and $\mathfrak{s} \in \mathfrak{S}^\bullet_{\lambda}(y;A)$. For any $1\leq i<j\leq r$, let $p_i,p_j$ be the corresponding paths in $\mathfrak{s}$. If $p_i,p_j$ intersect but don't cross, then $y(i)>y(j)$. We do an adjustment to make $p_i$ and $p_j$ cross at the last time that they intersect. This adjustment will result in a new state, and it will still be a reduced state: It is impossible for the number of crossings between any two paths to jump from $+1$ to $+2$. Therefore we obtain a reduced state  $\mathfrak{s}'\in \mathfrak{S}^\text{red}_{\lambda}(y(i,j);A)$. 

    We replace $y$ by $y(i,j)$ and repeat this process until all intersecting pair of paths are actually crossing. Now, by a similar argument we can show that one can adjust this state by moving the crossing to the first intersection. This process does not change the boundary flag and result in an open state. By the uniqueness of the open state, we end up at $w_A.$ Hence we get a sequence of inequalities 
    \[y\geq y(i,j)\geq...\geq w_A,\] as desired.
\end{proof}

\begin{example}
   {\em  Consider the model in \cref{modelex}. In the Bruhat order of $S_3$,  $(1\,3)\geq (1\,2\,3)$, and the following figure is a closed state for $\mathfrak{S}^\bullet_{(3,2,0)}(1\,3)$ with the same GTP.}
\end{example}

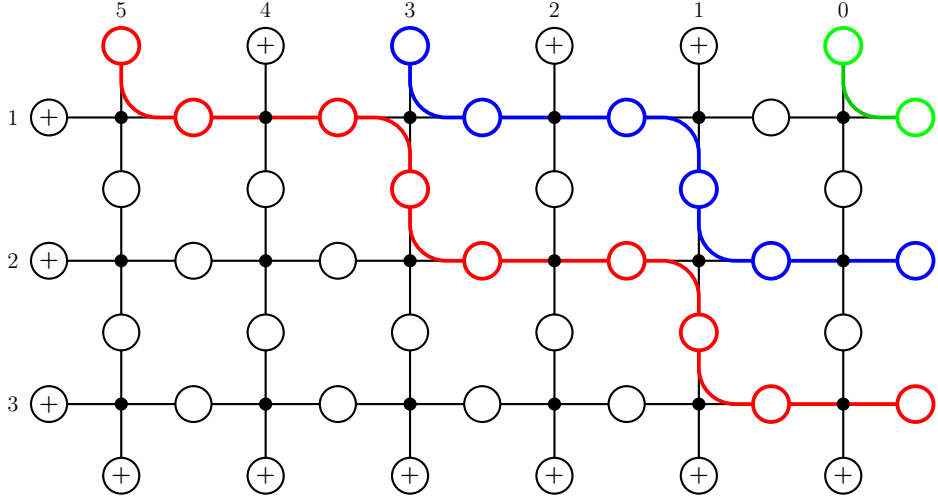
\begin{figure}[htb]
\centering
\begin{tikzpicture}[scale=.95,every node/.style={scale=.7}]

\foreach \i in {1,3,5}
	\draw[thick] (0,\i) -- (12,\i);
\foreach \j in {1,3,5,7,9,11}
	\draw[thick] (\j,0) to (\j,6);

\foreach \i in {0,2,4,6}
	\foreach \j in {1,3,5,7,9,11}
	  \draw[thick, fill=white] (\j,\i) circle (.25);
\foreach \i in {1,3,5}
	\foreach \j in {0,2,4,6,8,10,12}
	  \draw[thick, fill=white] (\j,\i) circle (.25);

      \draw[line width=.5mm,red] (1,6) to (1,5.5) to [out=-90,in=180] (1.5,5) to 
	(4.5,5) to [out=0,in=90] (5,4.5) to (5,3.5) to [out=-90,in=180] (5.5,3) to (8.5,3)  to [out=0,in=90] (9,2.5) to (9,1.5) to [out=-90,in=180] (9.5,1) to (12,1);

    \draw[line width=.5mm,blue] (5,6) to (5,5.5) to [out=-90,in=180] (5.5,5) to (8.5,5) to  [out=0,in=90] (9,4.5) to (9,3.5) to [out=-90,in=180] (9.5,3) to (12,3);

    \draw[line width=.5mm,green!80!black] (11,6) to (11,5.5) to [out=-90,in=180] (11.5,5) to (12,5);

	\foreach \i/\j in {1/6,2/5,4/5,5/4,6/3,8/3,9/2,10/1,12/1}
		 \draw[line width=.5mm,red,fill=white] (\i,\j) circle (.25);

	\foreach \i/\j in {5/6,6/5,8/5,9/4,10/3,12/3}
		 \draw[line width=.5mm,blue,fill=white] (\i,\j) circle (.25);

         \foreach \i/\j in {11/6,12/5}
		 \draw[line width=.5mm,green,fill=white] (\i,\j) circle (.25);

\foreach \i/\j in {1/1,3/1,5/1,7/1,9/1,11/1}
\node at (\i,\j) [circle,fill,inner sep=2.5pt]{};
\foreach \i/\j in {1/1,3/1,5/1,7/1,9/1,11/1}
\node at (\i,3) [circle,fill,inner sep=2.5pt]{};
\foreach \i/\j in {1/1,3/1,5/1,7/1,9/1,11/1}
\node at (\i,5) [circle,fill,inner sep=2.5pt]{};
\foreach \i/\j in {1/1,3/1,5/1,7/1,9/1,11/1}
\node at (\i,0) {{\bf +}};
\foreach \i/\j in {3/1,7/1,9/1}
\node at (\i,6) {{\bf +}};
\foreach \i/\j in {0/1,0/3,0/5}
\node at (0,\j) {{\bf +}};
	\foreach \j/\c in {1/5,3/4,5/3,7/2,9/1,11/0}
	\node at (\j,6.5) {$\c$};
	\foreach \i/\c in {1/3,3/2,5/1}
	\node at (-.5,\i) {$\c$};

\end{tikzpicture}
\vspace{4mm}
\caption{A closed state for $\mathfrak{S}_{(3,2,0),(1\,2\,3)}.$}
\end{figure}

\begin{rem}
    The proof of \cref{prop:y>w} actually constructs an injective map $\mathfrak{S}^\bullet_{\lambda}(y)\rightarrow \mathfrak{S}^\bullet_{\lambda}(w)$ provided that $y\leq w$ in the Bruhat order. Furthermore, the map preserves underlying Gelfand-Tsetlin pattern, as in the following commutative diagram:
    \[
\begin{tikzcd}[row sep=large, column sep=huge]
  \mathfrak{S}^\bullet_{\lambda}(y) \arrow[r, "\text{adjustment}"] \arrow[dr] & \mathfrak{S}_{\lambda}^\bullet(w) \arrow[d] \\
  & \mathcal{B}_\lambda.
\end{tikzcd}
\]
\end{rem}

\section{\texorpdfstring{Crystals and Demazure operators}{}}
\label{sec:crystal}

In this section, we first briefly review Kashiwara crystals. We will then focus on the crystal of semistandard Young tableau and the Demazure crystals.

Fix a root system $\Phi$ with index set $I$ and weight lattice $\Lambda$. We decompose $\Phi$ into positive and negative roots, and let $\alpha_i$ ($i \in I$) be the simple positive roots. Let $\alpha_i^\vee$ denote the corresponding simple coroots and $s_i$ the corresponding simple reflections generating the Weyl group $W$.

The root system $\Phi,\Lambda$ may be regarded as coming from of a complex reductive Lie group G with maximal torus $T$. Then we may identify $\Lambda$ with the group $X^*(T)$ of rational
characters of $T$. If $\mathbf{z} \in T$ and $\lambda\in \Lambda$ we will denote by $\mathbf{z}^\lambda$
the application of $\lambda$ to $\mathbf{z}$. Let $\mathcal{O}(T)$ be the set of polynomial functions on $T$. To each simple reflection $s_i$ with $i \in I$, we define the Demazure operator acting on $f \in \mathcal{O}(T)$ by
\[
\partial_i f(\mathbf{z}) = \frac{f(\mathbf{z}) - \mathbf{z}^{-\alpha_i} f(s_i \mathbf{z})}{1 - \mathbf{z}^{-\alpha_i}}. 
\]

These operators satisfy $\partial_i^2 = \partial_i = s_i \partial_i$. Given any $\mu \in \Lambda$, set $k = \langle \mu, \alpha_i^\vee \rangle$ so $s_i(\mu) = \mu - k \alpha_i$. Then the action on the monomial $\mathbf{z}^\mu$ is given by
\[
\partial_i \mathbf{z}^\mu =
\begin{cases}
    \mathbf{z}^\mu + \mathbf{z}^{\mu - \alpha_i} + \cdots + \mathbf{z}^{s_i(\mu)} & \text{if } k \geq 0, \\
    0 & \text{if } k = -1, \\
    -(\mathbf{z}^{\mu + \alpha_i} + \mathbf{z}^{\mu + 2 \alpha_i} + \cdots + \mathbf{z}^{s_i(\mu + \alpha_i)}) & \text{if } k < -1.
\end{cases} 
\]

\begin{defn}
    A {\bf Kashiwara crystal} of type $\Phi$ is a nonempty set $\mathcal{B}$ together with maps
   \[e_i,f_i:\mathcal{B}\rightarrow \mathcal{B}\cup \{0\},\]
      \[\varepsilon_i,\varphi_i:\mathcal{B}\rightarrow \mathbb{Z}\cup \{-\infty\},\]
      \[\textrm{wt}: \mathcal{B}\rightarrow \Lambda,\]
    where $i\in I$ and $0\notin \mathcal{B}$, satisfying the following conditions:

    \begin{enumerate}
        \item If $x,y\in\mathcal{B}$ then $e_i(x)=y$ if and only if $f_i(y)=x$. In this case, it is assumed that 
        \[\text{wt}(y)=\text{wt}(x)+\alpha_i\quad, \varepsilon_i(y)=\varepsilon_i(x)-1,\quad \varphi_i(y)=\varphi_i(x)+1.\]

        \item We require that
        \[\varphi_i(x)=\langle \text{wt}(x),\alpha_i^\vee\rangle +\varepsilon_i(x)\]
        for all $x\in \mathcal{B}$ and $i\in I$. In particular, $\varphi_i(x)=-\infty$ if and only if $\varepsilon_i(x)=-\infty$.  If $\varphi_i(x)=-\infty$, then we require that $e_i(x)=f_i(x)=0$.
    \end{enumerate}
    The map $\text{wt}$ is called
the weight map. The operators $e_i$ and $f_i$ are called crystal operators.

\end{defn}

\begin{example}
    The set $\mathcal{B}_\lambda$ of semistandard Young tableau of shape $\lambda$ in the alphabet $\{1,2,...,r\}$ form a Kashiwara crystal of type $A_{r+1}$.  We call $\mathcal{B}_\lambda$ a crystal of tableau. For more detail, we refer the readers to \cite{bump2017crystal}.
\end{example}

The crystal $\mathcal{B}_\lambda$ has connection with representations theory. If $\lambda$ is a dominant weight, then there is a unique irreducible representation $\pi_\lambda$ with highest weight $\lambda$, and they have the same character.

Let $\mathbb{Z}\mathcal{B}_\lambda$ be the free abelian group generated by $\mathcal{B}_\lambda.$. The Demazure operator $D_i$ on $\mathbb{Z}\mathcal{B}_\lambda$ is defined by linearly extending the following formula
\[
\partial_i v =
\begin{cases}
    v + f_i v + \cdots + f_i^k v & \text{if } k \geq 0, \\
    0 & \text{if } k = -1, \\
    -(e_i v + \cdots + e_i^{-k-1} v) & \text{if } k < -1,
\end{cases}
\]
for $v\in \mathcal{B}_\lambda$ where $k=\langle \text{wt}(v),\alpha_i^\vee\rangle.$

The Demazure operators satisfy $D_i^2=D_i$ and $s_iD_i=D_i$. They also satisfy the same braiding relations as the Weyl group $S_r$ of $A_{r+1}$. In fact, they are lifts of the Demazure operators $\partial_i$'s to crystals.

Littelmann \cite{littelmann} and Kashiwara \cite{kashiwara} proved the following refined Demazure character formula:

\begin{thm}\label{likas}
    For the crystal $\mathcal{B}_\lambda$ and Weyl group $W$:

\begin{itemize}
    \item[(i)] \textit{There exist subsets} $\mathcal{B}_\lambda(w)$ \textit{of} $\mathcal{B}$ \textit{indexed by} $w \in W$ \textit{such that} $\mathcal{B}_\lambda(1) = \{ v_\lambda \}$, $\mathcal{B}_\lambda(w_0) = \mathcal{B}$ \textit{and if} $s_i w > w$ \textit{then}
    \[
    \mathcal{B}_\lambda(s_i w) = \{ x \in \mathcal{B}_\lambda \mid e_i^k x \in \mathcal{B}(w) \text{ for some } k \geq 0\}.
    \]
    
    \item[(ii)] \textit{If} $S$ \textit{is an} an equivalence class of elements of $\mathcal{B}_\lambda$ under the equivalence relation that $x \equiv y$ if $x = e_i^r y$ or $x = f_i^r y$ for some $r$. Then  $\mathcal{B}_\lambda(w) \cap S$ is one of the three possibilities: $\varnothing, S$ \textit{or} $\{ u_S \}$, where $u_S$ is the unique highest weight element in $S$ characterized by $e_i(u_S) = 0$. 
    
    \item[(iii)] We have
    \[
    \sum_{x \in \mathcal{B}_\lambda(w)} \mathbf{z}^{\text{wt}(x)} = \partial_w \mathbf{z}^\lambda.
    \]
\end{itemize}
\end{thm}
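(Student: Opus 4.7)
The plan is to base the entire theorem on the crystal Demazure operators $D_i$ on $\mathbb{Z}\mathcal{B}_\lambda$ already introduced in the text, and to read off all three structural statements from their algebraic properties. My first step is to establish two key identities: $D_i^2 = D_i$, and the braid relations $D_i D_j D_i \cdots = D_j D_i D_j \cdots$ for every pair of simple reflections. Idempotency is essentially a one-line calculation on a single $i$-string: for $S = \{u, f_i u, \ldots, f_i^m u\}$ with $u$ the highest element, one computes $D_i u = u + f_i u + \cdots + f_i^m u$ and checks from the piecewise formula that every element of $S$ is sent into the span of this sum and is fixed by another application of $D_i$. The braid relations are the technically hard step and this is where I expect the main obstacle to lie: one must verify, for each rank-two subsystem generated by $s_i$ and $s_j$, that $D_i D_j D_i$ and $D_j D_i D_j$ (or the length-two version in type $A_1 \times A_1$) agree on connected $\{i,j\}$-subcrystals. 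In type $A$ this reduces to a Stembridge-style case analysis of how $i$- and $j$-strings are glued in a rank-two component.

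With those identities in hand, I define $\mathcal{B}_\lambda(w) := \mathrm{supp}(D_w v_\lambda) \subseteq \mathcal{B}_\lambda$, where $D_w := D_{i_1} \cdots D_{i_k}$ for any reduced expression $w = s_{i_1} \cdots s_{i_k}$; braid relations plus idempotency make $D_w$ independent of the chosen reduced word. An induction on $\ell(w)$, using the string formula, shows that every nonzero coefficient of $D_w v_\lambda$ equals $1$, so the support captures all the information. The inductive description in (i) then falls out: when $s_i w > w$, applying $D_i$ to $\sum_{x \in \mathcal{B}_\lambda(w)} x$ extends each $i$-string downward from its highest member already lying in $\mathcal{B}_\lambda(w)$, and the resulting support is exactly $\{x : e_i^k x \in \mathcal{B}_\lambda(w) \text{ for some } k \geq 0\}$. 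The base cases $\mathcal{B}_\lambda(1) = \{v_\lambda\}$ and $\mathcal{B}_\lambda(w_0) = \mathcal{B}_\lambda$ drop out from $D_{\mathrm{id}} = \mathrm{id}$ and from the fact that applying all $D_i$'s along a reduced word for $w_0$ saturates every $i$-string.

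For part (ii), the key consequence of Step~1 is that the image of $D_i$ lies in the subgroup of $\mathbb{Z}\mathcal{B}_\lambda$ spanned by full $i$-string sums. Hence if some reduced expression for $w$ ends in $s_i$, writing $w = s_iw'$ with $\ell(w) = \ell(w')+1$ gives $D_wv_\lambda = D_i D_{w'} v_\lambda$, a sum of complete $i$-strings, so $\mathcal{B}_\lambda(w) \cap S$ is either $S$ or $\emptyset$ for every $i$-string $S$. If no reduced word for $w$ ends in $s_i$, the last operator applied is some $D_j$ with $j \neq i$, whose image is a sum of complete $j$-strings; such a sum meets a given $i$-string $S$ at most in its highest-weight vector $u_S$, so the intersection is $\{u_S\}$ or $\emptyset$. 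Combining the two cases yields the trichotomy.

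Finally, part (iii) is immediate from the construction. The linear operator $D_i$ was set up precisely so that it intertwines with the polynomial Demazure operator $\partial_i$ under the weight character, i.e., $\mathrm{ch}(D_i v) = \partial_i \, \mathbf{z}^{\mathrm{wt}(v)}$, which one verifies termwise by comparing the string formula for $D_i v$ with the monomial formula for $\partial_i \mathbf{z}^\mu$ recorded in the text. Iterating along a reduced word for $w$ gives $\mathrm{ch}\bigl(\sum_{x \in \mathcal{B}_\lambda(w)} x\bigr) = \partial_{i_1} \cdots \partial_{i_k}\,\mathbf{z}^\lambda = \partial_w \mathbf{z}^\lambda$, which is the refined character identity.
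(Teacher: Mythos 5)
This theorem is not proved in the paper at all: it is the Littelmann--Kashiwara refined Demazure character formula, quoted from \cite{littelmann} and \cite{kashiwara}, so there is no internal argument to compare yours against. Judged on its own terms, your proposal has the right general shape (build everything from the operators $D_i$), but it hides the entire content of the theorem in two steps that do not work as described. First, you place all the difficulty in the operator-level braid relations for the $D_i$ and propose to dispose of them by a ``Stembridge-style case analysis'' in rank two. But a rank-two verification must be carried out on every connected normal crystal $\mathcal{B}_{(\lambda_1,\lambda_2,0)}$, an infinite family, so this is not a finite case check; and in the standard proofs the reduced-word independence of $\mathcal{B}_\lambda(w)$ is not obtained from operator braid relations but is established \emph{simultaneously} with the string property (ii), which is precisely Littelmann's conjecture that Kashiwara proved using global bases of Demazure modules. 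Relatedly, your claim that ``every nonzero coefficient of $D_w v_\lambda$ equals $1$'' by induction already requires (ii) as the inductive hypothesis: for $x$ with $\langle \mathrm{wt}(x),\alpha_i^\vee\rangle<-1$ the formula for $D_i x$ produces \emph{negative} terms, and only the trichotomy of (ii) guarantees that these cancellations resolve into a $0/1$-sum. So your ordering ``(i), then coefficients, then (ii)'' is circular.

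Second, your argument for (ii) rests on a false lemma, namely that a union of complete $j$-strings ($j\neq i$) meets a given $i$-string $S$ at most in its $i$-highest element $u_S$. Counterexample in the adjoint crystal $\mathcal{B}_{(2,1,0)}$ of $\mathfrak{sl}_3$: let $T_1$ be the highest weight tableau and $T_2=f_1T_1$. The complete $2$-string through $T_2$ is $\{T_2, f_2T_2, f_2^2T_2\}$, and its intersection with the $1$-string $S=\{T_1,T_2\}$ is $\{T_2\}$, which is the \emph{lowest}, not the highest, element of $S$. Hence ``the last operator applied was $D_j$'' gives you no control over intersections with $i$-strings, and the trichotomy $\varnothing$, $S$, $\{u_S\}$ does not follow. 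Since the paper treats this theorem as a citation, the appropriate course is to do the same rather than to attempt a from-scratch proof; if you do want a self-contained type-$A$ argument, it should go through the tableau/key combinatorics rather than through operator identities.
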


\begin{rem}
We can also define operators $\mathfrak{D}_i$ on $\mathcal{B}_\lambda$ by
\[
\mathfrak{D}_i X = \{ x \in \mathcal{B} \mid e_i^k(x) \in X \text{ for some } k \geq 0\}. 
\]

Let $\lambda$ be a dominant weight. Let $w = s_{i_r} \cdots s_{i_1}$ be a reduced word for the Weyl group element $w$. Then \[
\mathcal{B}_\lambda(w) = \mathfrak{D}_{i_r} \cdots \mathfrak{D}_{i_1} \{ u_\lambda \},
\]
where $u_\lambda$ is the highest weight element of the normal crystal $\mathcal{B}_\lambda$ with highest weight $\lambda$. 
\end{rem}

\begin{defn}
    The subsets $\mathcal{B}_\lambda(w)$ in \cref{likas} are called crystal Demazure crystals.
\end{defn}

For $y\in W$, let $\mathcal{B}^\circ(y)$ be a collection of disjoint subsets of $\mathcal{B}$. We call these a family of \textit{crystal Demazure atoms} if
\[
\mathcal{B}_\lambda(w) = \bigcup_{y \leq w} \mathcal{B}_\lambda^\circ(y). 
\]
Furthermore, if a family of disjoint subsets $\mathcal{B}_\lambda^\circ(y)$ satisfies the condition above, then it is unique.

There is a map $\sigma:\mathcal{B}_\lambda \rightarrow W$ such that for $v\in \mathcal{B}_\lambda$, if $w=w_0\sigma(v)$ then $v\in \mathcal{B}^\circ_\lambda(w)$. The map $\sigma$ is related to the theory of Lascoux-Schützenberger keys. In each atom $\mathcal{B}_\lambda^\circ(y)$, there exists a unique {\em key tableau}, which has the following property: from left to right every column contains the next column as a subset elementwise. For more details we refer the readers to \cite{BBBG}.

\section{\texorpdfstring{Proof of the main theorem}{}}
\label{sec:main}

Fix a partition $\lambda=(\lambda_1,...,\lambda_r)$, and let $\mathcal{B}_{\lambda}$ be the crystals of semistandard Young tableaux in shape $\lambda.$ Let $W=S_r$ be the Weyl group and $W_\lambda$ be the stabilizer of $\lambda$ in $W$. Let $\sigma:\mathcal{B}_\lambda\rightarrow \mathcal{B}_\lambda$ be the Schützenberger involution. Let $\theta':\mathfrak{S}_\lambda^\bullet(y)\rightarrow \mathcal{B}_\lambda$ be the map sending a state $\mathfrak{s}$ to the tableau $\theta'(\mathfrak{s})$ obtained as follows: Let $\text{GTP}(\mathfrak{s})$ be the Gelfand-Tsetlin pattern associated with $\mathfrak{s}$. It is left strict
since there are no $b_1$ patterns in the closed model so we may define $\text{GTP}^\circ(\mathfrak{s})$. Then there is
a tableau $T'$ associated with $\text{GTP}^\circ(\mathfrak{s})$ and we define $\theta'(\mathfrak{s})=T'$. Now $\theta:\mathfrak{S}_\lambda^\bullet(y)\rightarrow \mathcal{B}_\lambda$ is the map $\sigma \circ \theta'$.  

We will prove the main theorem \cref{mainthm} in this section. We will first restate it in a more relevant way:

\begin{thm}\label{mainthm2}
For any $y$ in the Weyl group $W=S_r$, the map $\theta$ is a bijective map from $\mathfrak{S}^\bullet_{\lambda}(y)$ to the Demazure crystal $\mathcal{B}_\lambda(y)$.
\end{thm}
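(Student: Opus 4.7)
The strategy is induction on the length $\ell(y)$ in $W=S_r$, using the inductive characterization of Demazure crystals from \cref{likas}(i): if $s_iw>w$ then $\mathcal{B}_\lambda(s_iw)=\{x\in\mathcal{B}_\lambda:e_i^k(x)\in\mathcal{B}_\lambda(w)\text{ for some }k\geq 0\}$. For the base case $y=e$, \cref{thm:state} pins down a unique closed state in $\mathfrak{S}^\bullet_\lambda(e)$ (only the GTP $A$ with $w_A=e$ is allowed, and it has a single closed-state realization), and a direct check identifies its image under $\theta$ with the highest-weight tableau $v_\lambda$, the sole element of $\mathcal{B}_\lambda(e)$.

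For the inductive step, assume the theorem for $w$ and prove it for $s_iw$ with $s_iw>w$. I would split into three parts. \emph{Injectivity} of $\theta$ on $\mathfrak{S}^\bullet_\lambda(s_iw)$ is immediate from \cref{thm:state}, since each fiber $\mathfrak{S}^\bullet_\lambda(s_iw;A)$ has at most one element, distinct GTPs produce distinct tableaux under $\theta'$, and $\sigma$ is a bijection of $\mathcal{B}_\lambda$. To show the \emph{image lies in} $\mathcal{B}_\lambda(s_iw)$, I would invoke the two technical claims flagged in the introduction: Case~1 asserts that for $\mathfrak{s}\in\mathfrak{S}^\bullet_\lambda(s_iw)$ with $e_i\theta(\mathfrak{s})\neq 0$ there exists $\mathfrak{t}\in\mathfrak{S}^\bullet_\lambda(s_iw)$ with $\theta(\mathfrak{t})=e_i\theta(\mathfrak{s})$, and Case~2 asserts that if $e_i\theta(\mathfrak{s})=0$ then there exists $\mathfrak{t}\in\mathfrak{S}^\bullet_\lambda(w)$ with $\theta(\mathfrak{t})=\theta(\mathfrak{s})$. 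Iterating Case~1 (which terminates by finiteness) produces a state whose image is the $e_i$-highest element of the $e_i$-string through $\theta(\mathfrak{s})$; Case~2 then places that element in $\theta(\mathfrak{S}^\bullet_\lambda(w))=\mathcal{B}_\lambda(w)$ by the inductive hypothesis, so $e_i^k\theta(\mathfrak{s})\in\mathcal{B}_\lambda(w)$ for some $k$ and hence $\theta(\mathfrak{s})\in\mathcal{B}_\lambda(s_iw)$.

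\emph{Surjectivity} I would obtain via cardinality comparison. The set-level bijection $\mathfrak{S}^\bullet_\lambda(s_iw)\cong\bigsqcup_{y\leq s_iw}\mathfrak{S}^\circ_\lambda(y)$ from \cref{eq:7} (established in \cref{sec:state} via the closed adjustments and \cref{thm:state}) pairs with the standard Demazure decomposition $\mathcal{B}_\lambda(s_iw)=\bigsqcup_{y\leq s_iw}\mathcal{B}^\circ_\lambda(y)$ of \cref{eq:6}, so it suffices to have $|\mathfrak{S}^\circ_\lambda(y)|=|\mathcal{B}^\circ_\lambda(y)|$ for every $y$, which follows by specializing the character identities $Z(\mathfrak{S}^\circ_\lambda(y);{\bf z})=\partial^\circ_y{\bf z}^\lambda$ and $\text{ch}(\mathcal{B}^\circ_\lambda(y))=\partial^\circ_y{\bf z}^\lambda$ at ${\bf z}=(1,\dots,1)$ (each Boltzmann weight is $1$ under this specialization, so $Z$ counts states). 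Combined with injectivity and image containment, this forces $\theta$ to be a bijection $\mathfrak{S}^\bullet_\lambda(s_iw)\to\mathcal{B}_\lambda(s_iw)$, completing the induction.

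The main obstacle is proving Cases~1 and~2, which is the technical heart of the argument. One must translate the Kashiwara operator $e_i$ on tableaux (conjugated by the Sch\"utzenberger involution $\sigma$) into an explicit local recoloring of the paths $p_i$ and $p_{i+1}$ in a closed state. The recoloring must preserve the closed-state admissibility constraints---no $a_{2,2}$, $a_{2,4}$, or $b_1$ patterns, and each pair of paths crossing at its \emph{last} intersection---while either maintaining the right-boundary flag $s_iw$ (Case~1) or permitting a compatible change to $w$ (Case~2). Identifying $e_i$-highest closed states and showing that their underlying GTPs are compatible with the smaller boundary flag $w$ (the crux of Case~2), and exhibiting a local recoloring that realizes $e_i$ while preserving the closed-state constraints (the crux of Case~1), will require detailed case analysis of the vertex configurations along and around the paths $p_i$ and $p_{i+1}$.
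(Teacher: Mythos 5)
Your proposal follows essentially the same route as the paper: induction on the Bruhat order via the recursive description of $\mathcal{B}_\lambda(s_iw)$ in \cref{likas}, reduction of bijectivity to the containment $\theta(\mathfrak{S}^\bullet_\lambda(s_iw))\subseteq\mathcal{B}_\lambda(s_iw)$ by a character/cardinality comparison, and establishment of that containment by iterating Case~1 to reach the $e_i$-highest element of the string and then applying Case~2 together with the inductive hypothesis. Your counting argument for surjectivity passes through the atom decomposition and the specialization ${\bf z}=(1,\dots,1)$ rather than the paper's direct appeal to the equality of characters of $\mathfrak{S}^\bullet_\lambda(s_iw)$ and $\mathcal{B}_\lambda(s_iw)$, but this is the same idea, and you correctly identify Cases~1 and~2 as the genuine technical content that remains to be proved.
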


\begin{proof}

The proof will be by induction
on the Bruhat order. If $y = 1$ this is easily checked. Let $s_i$ be a simple reflection. Assume that $\theta(\mathfrak{S}^\bullet_\lambda(y))=\mathcal{B}_\lambda(y)$ and that $s_iy>y$. Inductively, we want to show that \[\theta(\mathfrak{S}^\bullet_\lambda(s_iy))=\mathcal{B}_\lambda(s_iy).\] We note that since $\mathfrak{S}^\bullet_\lambda(s_iy)$ and $\mathcal{B}_\lambda(s_iy)$ have the same character (\cref{eq:3}), it is sufficient to show that \[\theta(\mathfrak{S}^\bullet_\lambda(s_iy))\subseteq\mathcal{B}_\lambda(s_iy).\]
Now let $\mathfrak{s} \in \mathfrak{S}_\lambda^\bullet(s_iy)$.  We will prove the following two facts:

\textbf{Case 1.} If $e_i(\theta(\mathfrak{s})) \neq 0$ then there exists $\mathfrak{t}\in \mathfrak{S}_{\lambda}^\bullet(s_iy)$ such that $\theta(\mathfrak{t})=e_i(\theta(\mathfrak{s})).$

\textbf{Case 2.} If $e_i(\theta(\mathfrak{s})) = 0$ then there exists a recoloring of the state $\mathfrak{s}$ into $\mathfrak{S}_\lambda^\bullet(y)$, i.e., there exsits $\mathfrak{t}\in \mathfrak{S}_{\lambda}^\bullet(y)$ such that $\theta(\mathfrak{s})=\theta(\mathfrak{t}).$

Assume that Case 1 and Case 2 are established then we can prove the main theorem.  Let $k=\varepsilon_i(\theta(\mathfrak{s}))$. Using Case 1 result $k$ times there exists $\mathfrak{t} \in \mathfrak{S}_\lambda^\bullet(s_iy)$ such that \[\theta(\mathfrak{t})=e_i^k\theta(\mathfrak{s}).\] Then Case 2 applies to $\mathfrak{t}$, so we can recolor $\mathfrak{t}$ into $\mathfrak{t}'\in \mathfrak{S}_\lambda^\bullet(y).$ By induction \[\theta(\mathfrak{t}') \in \mathcal{B}_\lambda(y).\] Thus \[\theta(\mathfrak{t})\in \mathcal{B}_\lambda(y)\] since $\theta(\mathfrak{t})=\theta(\mathfrak{t}').$ This implies that 
\[\theta(\mathfrak{s})\in \mathcal{B}_\lambda(s_iy),\] 
from the definition \cref{likas} of
the Demazure crystals.

\end{proof}

To establish the two cases, we need to first describe how the crystal operator $e_i$ works. The crystal structure of $\mathcal{B}_\lambda$ is constructed via a morphism of crystals from the tensor product of crystals of rows. Translating how $e_i$ works on $\theta(\mathfrak{s})$ in $\mathcal{B}_\lambda$, the effect of $e_i$ on $\mathfrak{s}$, is to move a vertical edge above row $i$ to the right. Equivalently, the effect of $e_i$ on $\text{GTP}^\circ(\mathfrak{s})$, is to decrease an entry in the $i+1$-th row by $1$.

\begin{example}
   {\em Here is an example for Case 1.  Consider $\lambda=(2,1,0)$, $y=s_2\in S_3$, and the closed state $\mathfrak{s}_1$ given below. Then $s_1s_2>y$. The operator $e_1$ will move colored edges on and above row $2$ one units to the right. In this case we need to ``recolor" to obtain a closed states after moving the edges.

   }

\begin{figure}[h!]
\centering
\begin{tikzpicture}[scale=.55,every node/.style={scale=.7}]

\foreach \i in {1,3,5}
	\draw[thick] (0,\i) -- (10,\i);
\foreach \j in {1,3,5,7,9}
	\draw[thick] (\j,0) to (\j,6);

\foreach \i in {0,2,4,6}
	\foreach \j in {1,3,5,7,9}
	  \draw[thick, fill=white] (\j,\i) circle (.25);
\foreach \i in {1,3,5}
	\foreach \j in {0,2,4,6,8,10}
	  \draw[thick, fill=white] (\j,\i) circle (.25);

      \draw[line width=.5mm,red] (1,6) to (1,5.5) to [out=-90,in=180] (1.5,5) to 
	(4.5,5) to [out=0,in=90] (5,4.5) to (5,3.5) to [out=-90,in=180] (5.5,3) to (10,3);

    \draw[line width=.5mm,blue] (5,6) to (5,5.5) to [out=-90,in=180] (5.5,5) to (6.5,5) to  [out=0,in=90] (7,4.5) to (7,1.5) to [out=-90,in=180] (7.5,1) to (10,1);

    \draw[line width=.5mm,green!80!black] (9,6) to (9,5.5) to [out=-90,in=180] (9.5,5) to (10,5);

	\foreach \i/\j in {1/6,2/5,4/5,5/4,6/3,8/3,10/3}
		 \draw[line width=.5mm,red,fill=white] (\i,\j) circle (.25);

	\foreach \i/\j in {5/6,6/5,8/1,7/4,7/2,10/1}
		 \draw[line width=.5mm,blue,fill=white] (\i,\j) circle (.25);

     \foreach \i/\j in {9/6,10/5}
		 \draw[line width=.5mm,green,fill=white] (\i,\j) circle (.25);

\foreach \i/\j in {1/1,3/1,5/1,7/1,9/1}
\node at (\i,\j) [circle,fill,inner sep=2.5pt]{};
\foreach \i/\j in {1/1,3/1,5/1,7/1,9/1}
\node at (\i,3) [circle,fill,inner sep=2.5pt]{};
\foreach \i/\j in {1/1,3/1,5/1,7/1,9/1}
\node at (\i,5) [circle,fill,inner sep=2.5pt]{};
\foreach \i/\j in {1/1,3/1,5/1,7/1,9/1}
\node at (\i,0) {{\bf +}};
\foreach \i/\j in {3/1,7/1,9/1}
\node at (\i,6) {{\bf +}};
\foreach \i/\j in {0/1,0/3,0/5}
\node at (0,\j) {{\bf +}};
\node at (8,4) {$\longrightarrow$};
	\foreach \j/\c in {1/5,3/4,5/3,7/2,9/1}
	\node at (\j,6.5) {$\c$};
	\foreach \i/\c in {1/3,3/2,5/1}
	\node at (-.5,\i) {$\c$};

\end{tikzpicture}
\end{figure}
\[{\Big\downarrow}\]
\begin{figure}[h!]
\centering
\begin{tikzpicture}[scale=.55,every node/.style={scale=.7}]

\foreach \i in {1,3,5}
	\draw[thick] (0,\i) -- (10,\i);
\foreach \j in {1,3,5,7,9}
	\draw[thick] (\j,0) to (\j,6);

\foreach \i in {0,2,4,6}
	\foreach \j in {1,3,5,7,9}
	  \draw[thick, fill=white] (\j,\i) circle (.25);
\foreach \i in {1,3,5}
	\foreach \j in {0,2,4,6,8,10}
	  \draw[thick, fill=white] (\j,\i) circle (.25);

      \draw[line width=.5mm,red] (1,6) to (1,5.5) to [out=-90,in=180] (1.5,5) to (8.5,5) to  [out=0,in=90] (9,4.5) to (9,3.5) to [out=-90,in=180] (9.5,3) to (10,3);

    \draw[line width=.5mm,blue] (5,6) to (5,4.5) to (5,3.5) to [out=-90,in=180] (5.5,3) to (6.5,3) to [out=0,in=90] (7,2.5) to (7,1.5) to [out=-90,in=180] (7.5,1) to (10,1);

    \draw[line width=.5mm,green!80!black] (9,6) to (9,5.5) to [out=-90,in=180] (9.5,5) to (10,5);

	\foreach \i/\j in {1/6,2/5,4/5,6/5,8/5,9/4,10/3}
		 \draw[line width=.5mm,red,fill=white] (\i,\j) circle (.25);

	\foreach \i/\j in {5/6,5/4,6/3,7/2,8/1,10/1}
		 \draw[line width=.5mm,blue,fill=white] (\i,\j) circle (.25);

     \foreach \i/\j in {9/6,10/5}
		 \draw[line width=.5mm,green,fill=white] (\i,\j) circle (.25);

\foreach \i/\j in {1/1,3/1,5/1,7/1,9/1}
\node at (\i,\j) [circle,fill,inner sep=2.5pt]{};
\foreach \i/\j in {1/1,3/1,5/1,7/1,9/1}
\node at (\i,3) [circle,fill,inner sep=2.5pt]{};
\foreach \i/\j in {1/1,3/1,5/1,7/1,9/1}
\node at (\i,5) [circle,fill,inner sep=2.5pt]{};
\foreach \i/\j in {1/1,3/1,5/1,7/1,9/1}
\node at (\i,0) {{\bf +}};
\foreach \i/\j in {3/1,7/1,9/1}
\node at (\i,6) {{\bf +}};
\foreach \i/\j in {0/1,0/3,0/5}
\node at (0,\j) {{\bf +}};
	\foreach \j/\c in {1/5,3/4,5/3,7/2,9/1}
	\node at (\j,6.5) {$\c$};
	\foreach \i/\c in {1/3,3/2,5/1}
	\node at (-.5,\i) {$\c$};

\end{tikzpicture}
\end{figure}

\end{example}

\begin{example}
   {\em  Here is an example for Case 2.  Consider $\lambda=(2,1,0)$, $y=s_2\in S_3$, and the closed state $\mathfrak{s}_2$ given below. Then $s_1s_2>y$ and $e_1(\theta(\mathfrak{s}_2))=0$. In this case,  we could indeed ``recolor" the state for $\mathfrak{S}_\lambda^\bullet(s_2)$ to obtain a closed state for $\mathfrak{S}_\lambda^\bullet(s_1s_2)$.}

\begin{figure}[h!]
\centering
\begin{tikzpicture}[scale=.55,every node/.style={scale=.7}]

\foreach \i in {1,3,5}
	\draw[thick] (0,\i) -- (10,\i);
\foreach \j in {1,3,5,7,9}
	\draw[thick] (\j,0) to (\j,6);

\foreach \i in {0,2,4,6}
	\foreach \j in {1,3,5,7,9}
	  \draw[thick, fill=white] (\j,\i) circle (.25);
\foreach \i in {1,3,5}
	\foreach \j in {0,2,4,6,8,10}
	  \draw[thick, fill=white] (\j,\i) circle (.25);

      \draw[line width=.5mm,red] (1,6) to (1,5.5) to [out=-90,in=180] (1.5,5) to 
	(4.5,5) to [out=0,in=90] (5,4.5) to (5,3.5) to [out=-90,in=180] (5.5,3) to (10,3);

    \draw[line width=.5mm,blue] (5,6) to (5,5.5) to [out=-90,in=180] (5.5,5) to (8.5,5) to [out=0,in=90] (9,4.5) to (9,1.5) to [out=-90,in=180] (9.5,1) to (10,1);

    \draw[line width=.5mm,green!80!black] (9,6) to (9,5.5) to [out=-90,in=180] (9.5,5) to (10,5);

	\foreach \i/\j in {1/6,2/5,4/5,5/4,6/3,8/3,10/3}
		 \draw[line width=.5mm,red,fill=white] (\i,\j) circle (.25);

	\foreach \i/\j in {5/6,6/5,8/5,9/4,9/2,10/1}
		 \draw[line width=.5mm,blue,fill=white] (\i,\j) circle (.25);

     \foreach \i/\j in {9/6,10/5}
		 \draw[line width=.5mm,green,fill=white] (\i,\j) circle (.25);

\foreach \i/\j in {1/1,3/1,5/1,7/1,9/1}
\node at (\i,\j) [circle,fill,inner sep=2.5pt]{};
\foreach \i/\j in {1/1,3/1,5/1,7/1,9/1}
\node at (\i,3) [circle,fill,inner sep=2.5pt]{};
\foreach \i/\j in {1/1,3/1,5/1,7/1,9/1}
\node at (\i,5) [circle,fill,inner sep=2.5pt]{};
\foreach \i/\j in {1/1,3/1,5/1,7/1,9/1}
\node at (\i,0) {{\bf +}};
\foreach \i/\j in {3/1,7/1,9/1}
\node at (\i,6) {{\bf +}};
\foreach \i/\j in {0/1,0/3,0/5}
\node at (0,\j) {{\bf +}};
	\foreach \j/\c in {1/5,3/4,5/3,7/2,9/1}
	\node at (\j,6.5) {$\c$};
	\foreach \i/\c in {1/3,3/2,5/1}
	\node at (-.5,\i) {$\c$};

\end{tikzpicture}
\end{figure}
\[{\Big\downarrow}\]
   \begin{figure}[h!]
\centering
\begin{tikzpicture}[scale=.55,every node/.style={scale=.7}]

\foreach \i in {1,3,5}
	\draw[thick] (0,\i) -- (10,\i);
\foreach \j in {1,3,5,7,9}
	\draw[thick] (\j,0) to (\j,6);

\foreach \i in {0,2,4,6}
	\foreach \j in {1,3,5,7,9}
	  \draw[thick, fill=white] (\j,\i) circle (.25);
\foreach \i in {1,3,5}
	\foreach \j in {0,2,4,6,8,10}
	  \draw[thick, fill=white] (\j,\i) circle (.25);

      \draw[line width=.5mm,red] (1,6) to (1,5.5) to [out=-90,in=180] (1.5,5) to  (10,5);

    \draw[line width=.5mm,blue] (5,6) to (5,3.5) to [out=-90,in=180] (5.5,3) to (8.5,3) to [out=0,in=90] (9,2.5) to (9,1.5) to [out=-90,in=180] (9.5,1) to (10,1);

    \draw[line width=.5mm,green!80!black] (9,6) to (9,3.5) to [out=-90,in=180] (9.5,3) to (10,3);

	\foreach \i/\j in {1/6,2/5,4/5,6/5,8/5,10/5}
		 \draw[line width=.5mm,red,fill=white] (\i,\j) circle (.25);

	\foreach \i/\j in {5/6,5/4,6/3,8/3,9/2,10/1}
		 \draw[line width=.5mm,blue,fill=white] (\i,\j) circle (.25);

     \foreach \i/\j in {9/6,9/4,10/3}
		 \draw[line width=.5mm,green,fill=white] (\i,\j) circle (.25);

\foreach \i/\j in {1/1,3/1,5/1,7/1,9/1}
\node at (\i,\j) [circle,fill,inner sep=2.5pt]{};
\foreach \i/\j in {1/1,3/1,5/1,7/1,9/1}
\node at (\i,3) [circle,fill,inner sep=2.5pt]{};
\foreach \i/\j in {1/1,3/1,5/1,7/1,9/1}
\node at (\i,5) [circle,fill,inner sep=2.5pt]{};
\foreach \i/\j in {1/1,3/1,5/1,7/1,9/1}
\node at (\i,0) {{\bf +}};
\foreach \i/\j in {3/1,7/1,9/1}
\node at (\i,6) {{\bf +}};
\foreach \i/\j in {0/1,0/3,0/5}
\node at (0,\j) {{\bf +}};
	\foreach \j/\c in {1/5,3/4,5/3,7/2,9/1}
	\node at (\j,6.5) {$\c$};
	\foreach \i/\c in {1/3,3/2,5/1}
	\node at (-.5,\i) {$\c$};

\end{tikzpicture}
\end{figure}
\end{example}

\newpage
We give a more precise of how $e_i$ works on $\text{GTP}^\circ(\mathfrak{s})$ in the following proposition:

\begin{prop}\label{crystaloperator}
Let $\mathfrak{s}$ be a closed state in $\mathfrak{S}^\bullet_\lambda(w)$ for some $w\in S_r$. Suppose the $i$-th, the $(i+1)$-th, and the $(i+2)$-th rows in $\text{GTP}^\circ(\mathfrak{s})$ are given in the following way:
\[x_1,x_2,...,x_{r-i+1},\]
\[y_1,y_2,...,y_{r-i},\]
\[z_1,z_2,...,z_{r-i-1}.\]
Define $k=r-i+1$, $z_{k-1}=0$ and $y_k=0$. We will compute the following quantities:
\[E_1=(y_{k-1}-z_{k-1})-(x_k-y_k)=y_{k-1}-x_k, \]
\[E_2=(y_{k-2}-z_{k-2})-(x_{k-1}-y_{k-1})+E_1,\]
\[E_3=(y_{k-3}-z_{k-3})-(x_{k-2}-y_{k-2})+E_2,\]
\[...\]
\[E_{r-i}=(y_1-z_1)-(x_2-y_2)+E_{r-i-1}.\]
Then we have $e_i(\theta({\bf x}))=0$ if and only if 
\[E_j\leq 0,\]
for all $1\leq j\leq r-i$. 

Furtheremore, suppose that \[\text{max}_{1\leq j\leq r-i}(E_j)>0\] and $t$ is the smallest index such that 
\[E_t=\text{max}_{1\leq j\leq r-i}(E_j)>0.\]
Then the crystal operator $e_i$ will decrease $y_{k-t}$ by 1.
\end{prop}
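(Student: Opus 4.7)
The plan is to translate the action of $e_i$ on $\theta(\mathfrak{s})$ into the standard signature rule for $f_{r-i}$ on $T':=\theta'(\mathfrak{s})$, and then match that rule to the formula for the $E_j$. First, I would use $\theta=\sigma\circ\theta'$ together with the intertwining $\sigma e_i\sigma^{-1}=f_{r-i}$ of the Schützenberger involution on $\mathcal{B}_\lambda$ to reduce to analyzing $f_{r-i}(T')$: one then has $e_i(\theta(\mathfrak{s}))=0$ iff $f_{r-i}(T')=0$, and when nonzero the new state's $\text{GTP}^\circ$ is the GTP of $f_{r-i}(T')$, which differs from the GTP of $T'$ only by decreasing a single row-$(i{+}1)$ entry by $1$ (since $f_{r-i}$ converts exactly one letter $r-i$ into $r-i+1$).

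Next, I would read off that $a_j:=x_j-y_j$ (with $y_k:=0$) counts the $(r-i+1)$'s in row $j$ of $T'$ and $b_j:=y_j-z_j$ (with $z_{k-1}:=0$) counts the $(r-i)$'s in row $j$. Since rows strictly below row $k=r-i+1$ have entries strictly greater than $r-i+1$, and within each row the $(r-i)$'s sit to the left of the $(r-i+1)$'s, the row-reading word of $T'$ (rows bottom to top, left to right within each row) has $\{r-i,r-i+1\}$-signature
\[
(-)^{a_k}\,(+)^{b_{k-1}}(-)^{a_{k-1}}\,(+)^{b_{k-2}}(-)^{a_{k-2}}\,\cdots\,(+)^{b_1}(-)^{a_1},
\]
where $+$ denotes $r-i$ and $-$ denotes $r-i+1$. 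Letting $S_t$ be the partial sum ($+\mapsto+1$, $-\mapsto-1$), I would verify directly that $S_t$ attains its local maxima precisely at the right endpoints of the blocks $(+)^{b_{k-s}}$, with those values being exactly $E_1,E_2,\ldots,E_{r-i}$ (in addition to the initial $S_0=0$).

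The core step is then to apply the signature rule for $f_{r-i}$: iteratively cancel adjacent $-+$ pairs to reach a reduced word of the form $(+)^a(-)^b$, and modify the rightmost unmatched $+$ (vanishing if $a=0$). A short stack-based argument — cancellation preserves the partial-sum maximum, and the topmost $+$ in the final stack was pushed at the first moment when the running count of unmatched $+$'s reached its final value $a$ — gives $a=\max(0,\max_s E_s)$ and identifies the rightmost unmatched $+$ as the last $+$ in the block $(+)^{b_{k-s^*}}$, where $s^*$ is the \emph{smallest} index with $E_{s^*}=\max_s E_s$. Combining: $f_{r-i}(T')=0$ iff all $E_s\leq 0$; otherwise, with $t=s^*$, $f_{r-i}$ converts the rightmost $(r-i)$ in row $k-t$ of $T'$ to $r-i+1$, decreasing $y_{k-t}$ by exactly $1$ and leaving all other GTP entries intact.

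The main obstacle is matching ``smallest $t$ with $E_t=\max$'' to ``first position where $S_t$ reaches its maximum'' — the reversal between ``smallest $t$'' and ``earliest'' is easy to get backwards, and it relies on recognizing that in the stack-construction the bottom $+$'s of the final stack are the earliest unmatched ones in the reading order, so the top $+$ corresponds to the first chunk $s$ where the partial sum reaches the global maximum. Once this indexing is correctly pinned down, the rest of the argument is bookkeeping among three pieces: the GTP-to-letter-count dictionary, the explicit signature shape above, and the standard signature rule.
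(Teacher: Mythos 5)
Your proposal is correct and follows essentially the same route as the paper: reduce via the Schützenberger involution to computing $f_{r-i}$ on $T'=\theta'(\mathfrak{s})$, decompose $T'$ into its row crystals, read off $\varphi_{r-i}$ and $\varepsilon_{r-i}$ of each row as the Gelfand--Tsetlin differences $y_j-z_j$ and $x_j-y_j$, and identify the resulting partial sums with the $E_j$. The only cosmetic difference is that you phrase the tensor-product computation as the bracketing/signature rule with partial sums, whereas the paper invokes the equivalent $\max$-formula (Lemma 2.33 of \cite{bump2017crystal}); your stack argument also spells out the ``furthermore'' clause (which entry decreases, with the correct resolution of ties at the smallest index attaining the maximum) somewhat more explicitly than the paper's proof does.
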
 

\begin{proof}

    To set up the proof, recall that $\mathfrak{s}$ corresponds to Gelfand-Tsetlin pattern $\text{GTP}^\circ(\mathfrak{s})$ and a tableau $T'=\theta'(\mathfrak{s})$ (see the beginning of the section). Due to the effect of the involution, $e_i(\theta(\mathfrak{s}))=0$ is equivalent to $f_{r-i}(T')=0$. We use the row-reading of crystal of tableaux in Section 3.1 of \cite{bump2017crystal}. Thus the crystal $\mathcal{B}_\lambda$ is viewed as a seminormal subcrystal of $R_r\otimes R_{r-1}\otimes ...\otimes R_1$ where each $R_i$ is the crystal of the $i$-th row of $T'$. 
    
    A key ingredient of the proof is Lemma 2.33 of \cite{bump2017crystal}. Since $f_{r-i}(T')=0$ is further equivalent to $\varphi_{r-i}(T')=0$, then Lemma 2.33 of \cite{bump2017crystal} gives the following formula:
    \begin{equation}\label{keyeq}
    \varphi_{r-i}(R_r\otimes R_{r-1}\otimes ...\otimes R_1)=\max_{j=1}^r\left(\sum_{h=1}^j\varphi_{r-i}(R_{r-h+1})-\sum_{h=1}^{j-1}\varepsilon_{r-i}(R_{r-h+1}) \right). \tag{$\star$}\end{equation}

    We first give a description of $\varphi_{r-i}(R_{r-h+1})$. By definition, it measures the number of $r-i$ in row $r-h+1$. Now we translate it to $\text{GTP}^\circ(\mathfrak{s})$: the $k$-th row of $\text{GTP}^\circ(\mathfrak{s})$ is the shape of the tableaux with entries $1,...,r-k+1$. Hence the entry $r-i$ corresponds to comparing the $(i+1)$-th row and the $(i+2)$-th row of $\text{GTP}^\circ(\mathfrak{s})$ (treat the $r+1$-th row as empty). 

    The two rows are
    \[y_1,...,y_{r-i}\]
    and 
    \[z_1,...,z_{r-i-1}.\] 
    Hence we have 
    \[\varphi_{r-i}(R_{r-h+1})=y_{r-h+1}-z_{r-h+1}.\]
    The first term in \cref{keyeq} where $\sum_{h=1}^j\varphi_{r-i}(R_{r-h+1})$ could be nonzero is when $j=i+1$ and $h=j$. Indeed, if $j\leq i$ then $h\leq i$ and $R_{r-h+1}$ can only contain entries $\geq r-h+1\geq r-i+1$.

    In the case $h=j=i+1$, 
    \[\sum_{h=1}^j\varphi_{r-i}(R_{r-h+1})=\varphi_{r-i}(R_{r-i})=y_{r-i}.\]

    Similarly, $\varepsilon_{r-i}$ measures the number of $r-i+1$. Hence we will compare the $i$-th row and the $(i+1)$-th row of $\text{GTP}^\circ(\mathfrak{s})$, and we have
    \[\varepsilon_{r-i}(R_{r-h+1})=x_{r-h+1}-y_{r-h+1}.\]
    The first term in \cref{keyeq} where $\sum_{h=1}^{j-1} \varepsilon_{r-i}(R_{r-h+1})$ could be nonzero is when $j=i+1$ and $h=j-1$. In this case  $\varepsilon_{r-i}(R_{r-i})=x_{r-i+1}.$ 
    
    Thus, the first nonzero term on the right hand side of \cref{keyeq} is given by
    \[\sum_{h=1}^{i+1}\varphi_{r-i}(R_{r-h+1})-\sum_{h=1}^{i}\varepsilon_{r-i}(R_{r-h+1})=y_{r-i}-x_{r-i+1},\]
    which is precisely the definition of $E_1$. 
    All the other sums could be identified by the descriptions for $\varphi_{r-i}$ and $\varepsilon_{r-i}$ above. Thus $E_1,...,E_{r-i}$ are precisely the possibly nonzero terms in the formula of Lemma 2.33. Hence we finish the proof by \cref{keyeq} and  Lemma 2.33 of \cite{bump2017crystal}.
    
\end{proof}

We are now ready to prove both cases. For case 1, recall the definition of reduced states in \cref{def:states}. A generalized state is a {\em reduced state} if any two colored path cross at most once. We first prove a generalization of the closed adjustment in \cref{lem:closedadj}.

\begin{prop}\label{prop:adj}
     Given a reduced state $\mathfrak{s}$. Let $c_a,c_b$ be two different colors. For paths $p_a$ and $p_b$ that cross each other, we can always move the crossing to any vertex where the two paths intersect. This adjustment changes the state to a new reduced state $\mathfrak{s}'.$ Furthermore, this process does not change whether any two paths cross or not.
\end{prop}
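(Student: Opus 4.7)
The plan is to generalize the proof of \cref{lem:closedadj}, which only moved the crossing to the last (most down-right) intersection, to the case of an arbitrary intersection vertex. First I would describe the adjustment explicitly. Suppose $p_a$ and $p_b$ currently cross at a vertex $v_0$ (the unique crossing, by reducedness) and also meet at another vertex $v_1$ where they merely intersect. Let $\gamma_a$ be the sequence of edges traced by $p_a$ between $v_0$ and $v_1$, and $\gamma_b$ the sequence traced by $p_b$. The adjustment recolors $\gamma_a$ entirely from $c_a$ to $c_b$ and $\gamma_b$ entirely from $c_b$ to $c_a$, leaving all other edges, and in particular the entire boundary, untouched. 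A short inspection of \cref{table11} shows that each local pattern is preserved under a simultaneous swap $c_a \leftrightarrow c_b$: at interior vertices of $\gamma_a$ or $\gamma_b$ only one of the two colors appears and merely has its label changed, while at vertices where both paths meet (namely $v_0$, $v_1$, and any intermediate intersections) swapping both labels keeps us in the same $a_{2,*}$ block of the table. The net local effect at $v_0$ is to convert a crossing pattern into a non-crossing intersection, and at $v_1$ the reverse.

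Next I would observe that the boundary flag is unchanged (no boundary edge is modified) and the underlying Gelfand-Tsetlin pattern is unchanged, since $\mathrm{GTP}$ only records which vertical edges are colored, not which color they carry. What remains is to show that $\mathfrak{s}'$ is still reduced and, more strongly, that the crossing status of every pair of paths is preserved. Fix a third color $c_k$ with $k \notin \{a,b\}$. The number of $p_a \cap p_k$ crossings can change only as follows: each original $p_a \cap p_k$ crossing lying on $\gamma_a$ becomes a $p_b \cap p_k$ crossing, and each original $p_b \cap p_k$ crossing on $\gamma_b$ becomes a $p_a \cap p_k$ crossing. By reducedness of $\mathfrak{s}$, $p_a$ meets $p_k$ in at most one crossing in total, and so does $p_b$; hence the net change in the number of $p_a \cap p_k$ (respectively $p_b \cap p_k$) crossings lies in $\{-1, 0, +1\}$.

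Finally I would invoke a parity argument identical in spirit to the one in \cref{lem:closedadj}: the parity of the number of crossings between any two paths is determined by the boundary flag (odd iff the pair forms a crossing pair), and the flag is unchanged, so the change in crossing count must be even. The only even integer in $\{-1, 0, +1\}$ is $0$, hence no crossing count changes; pairs of paths not involving $c_a$ or $c_b$ are trivially unaffected since no edge they occupy is recolored. This yields both the reducedness of $\mathfrak{s}'$ and the final assertion of the proposition. The most delicate step is the pattern-by-pattern verification that the swap lands back in \cref{table11} along $\gamma_a \cup \gamma_b$, but this follows from the manifest $c_a \leftrightarrow c_b$ symmetry of that table; the real content of the argument is the combination of the bounded-change observation with the parity constraint forced by the boundary flag.
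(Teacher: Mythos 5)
Your proposal is correct and follows essentially the same route as the paper: the paper defines the recoloring by letting the two paths keep their endpoints but cross only at the chosen vertex, and then reduces the verification to the bounded-change-plus-parity argument already given in \cref{lem:closedadj} (change in crossing number with a third path lies in $\{-1,0,+1\}$, while the unchanged boundary flag fixes its parity, forcing it to be $0$). You simply spell out the segment-swap description and the Table~\ref{table11} check more explicitly than the paper does.
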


\begin{proof}
    By definition of a reduced state, the paths $p_a$ and $p_b$ cross exactly once. Let $v$ be a vertex that the two paths intersect. We want to recolor the two paths, i.e., re-assign spins of all the edges on the two paths, so that the new paths cross at $v$. Without changing the starting and ending colors, we let the two paths intersect at all the other vertices that they meet, and cross at $v$. This is the recoloring process, and this process will not change the boundary conditions.

    Now it remains to show we still have a reduced state $\mathfrak{s}'$. The rest of the proof is exactly the same as in \cref{lem:closedadj}. The adjustment does not affect how many times any two paths cross. Thus the state $\mathfrak{s}'$ is a reduced state, as desired.
\end{proof}

\begin{prop}\label{prop:case1}
    Let $\mathfrak{s}\in \mathfrak{S}_\lambda^\bullet(s_iy)$ and $e_i(\theta(\mathfrak{s}))\neq 0$ then there exists $\mathfrak{t}\in \mathfrak{S}_{\lambda}^\bullet(s_iy)$ such that $\theta(\mathfrak{t})=e_i(\theta(\mathfrak{s})).$
\end{prop}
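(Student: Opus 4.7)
The plan is to construct $\mathfrak{t}$ by a controlled local modification of $\mathfrak{s}$, guided by the explicit description of $e_i$ in Proposition~\ref{crystaloperator}, and then verify admissibility using the closed adjustment machinery from Section~3.

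First, I would use Proposition~\ref{crystaloperator} to identify the unique smallest index $t\in\{1,\dots,r-i\}$ with $E_t=\max_j E_j>0$; by that proposition, the action of $e_i$ on $\theta(\mathfrak{s})$ corresponds to decreasing the entry $y_{k-t}$ in row $i+1$ of $\text{GTP}^\circ(\mathfrak{s})$ by exactly one. Let $A'$ denote the resulting pattern, and let $c_m$ be the color of the vertical edge at column $y_{k-t}$ above row $i+1$ in $\mathfrak{s}$. The target state $\mathfrak{t}$ should satisfy $\text{GTP}(\mathfrak{t})=A'$ together with the same boundary flag $s_iy\mathbf{c}$. A preliminary check, using the strict positivity of $E_t$ and the minimality of $t$, confirms that $A'$ is a valid left-strict Gelfand--Tsetlin pattern with top row $\lambda+\rho$: no interleaving inequality between rows $i$, $i+1$, $i+2$ is violated by the single decrement.

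Next, I would construct $\mathfrak{t}$ from $\mathfrak{s}$ as follows. Modify the path $p_m$ so that its vertical edge above row $i+1$ is shifted from column $y_{k-t}$ to column $y_{k-t}-1$, with compensating changes to its horizontal segments in rows $i+1$ and $i+2$. This local rerouting yields a generalized state with the desired GTP $A'$ and unchanged boundary flag, but may produce crossings with neighboring paths that are out of position for a closed state. I would then apply Propositions~\ref{prop:adj} and~\ref{prop:adjust} to relocate each such crossing to the last intersection of the involved paths, producing a reduced state which is then closed. Closed adjustments do not alter the boundary flag or the underlying GTP, so the resulting $\mathfrak{t}$ lies in $\mathfrak{S}^\bullet_\lambda(s_iy)$ with $\text{GTP}(\mathfrak{t})=A'$.

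Finally, to conclude $\theta(\mathfrak{t})=e_i(\theta(\mathfrak{s}))$, I would invoke the bijection between left-strict GTPs with top row $\lambda+\rho$ and tableaux in $\mathcal{B}_\lambda$ via $\theta'$, together with the intertwining property of the Sch\"utzenberger involution with crystal operators, namely $\sigma\,e_i\,\sigma=f_{r-i}$. Under $\theta'$ the action of $f_{r-i}$ is precisely the GTP modification described in Proposition~\ref{crystaloperator}, so $\theta(\mathfrak{t})=\sigma(\theta'(\mathfrak{t}))=\sigma(f_{r-i}\,\theta'(\mathfrak{s}))=e_i(\theta(\mathfrak{s}))$, as required.

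The main obstacle I anticipate is the intermediate step: showing that the initial local modification of $p_m$ always produces a generalized state to which Proposition~\ref{prop:adjust} can be applied to obtain a closed state with GTP exactly $A'$. The delicate cases occur when another colored path currently occupies columns near $y_{k-t}$ in rows $i+1$ or $i+2$; here the specific minimality of $t$ among maximizers of the $E_j$ is essential, since it rules out cascading conflicts that would force further changes to entries outside row $i+1$. Carefully pairing the rerouting with the admissible local configurations of Table~\ref{table11} and invoking the parity-of-crossings argument used in the proof of Lemma~\ref{lem:closedadj} should resolve this step.
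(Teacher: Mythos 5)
Your skeleton (locate the GTP entry to decrement via \cref{crystaloperator}, perform a local edge move, repair with closed adjustments, and transport through the Sch\"utzenberger involution) matches the paper's, but the step you defer as ``the main obstacle'' is in fact the entire content of the proof, and your proposed resolution does not work as stated. Concretely, let $g$ be the colored vertical edge above row $i+1$ recorded by the entry $y_{k-t}$, and let $h$ be the colored horizontal edge at the bottom of the unit square whose left side is $g$ (the paper shows the top and right edges of that square are uncolored). If $g$ and $h$ carry the \emph{same} color, the rerouting you describe is immediate: the path turns at the bottom-left vertex $v$ and can be pushed around the other two sides of the square. But if they carry different colors $c_a\neq c_b$, the configuration at $v$ is an $a_{2,1}$ crossing, the path through $g$ continues straight down through $v$, and there is no ``compensating change to its horizontal segments in rows $i+1$ and $i+2$'' that moves only that one vertical edge while preserving path connectivity and the boundary flag. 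The minimality of $t$ among the maximizers of the $E_j$ is not the mechanism that rescues this case.

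What the paper actually proves is a genuine lemma at exactly this point: if $g$ and $h$ have different colors, then the crossing at $v$ forces the GTP equality $y_q=z_{q-1}$ (in the notation of \cref{crystaloperator} with $q=k-t$), and combining this with the inequalities $E_{t+j}\leq E_t$ coming from the maximality of $E_t$ yields $y_{q-1}\leq x_q$, hence $y_{q-1}=x_q$ by the interleaving condition; tracing this equality (iterating through intermediate colors if necessary) produces a \emph{second} intersection of $p_a$ and $p_b$. Only then can \cref{prop:adj} be invoked to relocate their unique crossing away from $v$, making $g$ and $h$ the same color, after which the edge move and a final application of \cref{prop:adjust} go through. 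So the correct order of operations is recolor--move--recolor; your proposal omits the first recoloring and, more importantly, the argument that it is always possible. That argument is the heart of the proposition and cannot be replaced by an appeal to the choice of $t$.
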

\begin{proof}
    The idea of the proof can be summarized by the following diagram:
    \[\mathfrak{s} \xrightarrow{\text{recolor}} \mathfrak{s}' \xrightarrow{\text{crystal operator  } e_i}\mathfrak{t}'\xrightarrow{\text{recolor}}\mathfrak{t},\] 
    where $\mathfrak{s}$ and $\mathfrak{s}'$ have the same Gelfand-Tsetlin pattern, and $\mathfrak{t}$ and $\mathfrak{t}'$ have the same Gelfand-Tsetlin pattern. Moreover, $\mathfrak{s},\mathfrak{t}$ are closed states, while $\mathfrak{s}'$ and $\mathfrak{t}'$ are possibliy only reduced states. 
    
    Since $e_i(\theta(\mathfrak{s}))\neq 0$, $e_i$ will move a vertical colored edge one unit to the right. We will denote the edge that needs to be moved by $g$. 

    Consider the edges around a square in the grid of the model, with the edge $g$ being the left side. Denote the bottom left vertex by $v$. Since $e_i\neq 0$, we must have the top edge and the right edge of the square are uncolored, while the left and bottom edge of the square are colored. Denote the bottom edge by $h$. The the effect of $e_i$ is to change $g$ and $h$ to uncolored edges, while giving colors to the other two edges of the square. 

    If $g,h$ are of different colors, say $c_a,c_b$, then we consider paths $p_a$ and $p_b$. By admissible local configurations, the two paths must cross at the vertex $v$ of intersection between edges $g,h$. We will carefully show that in $\mathfrak{s}$, the two paths $p_a,p_b$ must intersect at least twice:

    \begin{lem}
        {\em The two paths $p_a$ and $p_b$ must intersect at least twice. }
    \end{lem}

    \begin{proof}
        For the sake of clarity, let the vertical edge $g$ be of color blue and the horizontal edge $h$ be of color red. We then have a crossing and have red $>$ blue. 
        
        As in \cref{crystaloperator}, consider the three rows in $\text{GTP}^\circ(\mathfrak{s})$:
        \[x_1,x_2,...,x_{k},\]
        \[y_1,y_2,...,y_{k-1},\]
        \[z_1,z_2,...,z_{k-2}.\]
        The condition that $e_i\neq 0$ means that $e_i$ will decrease one element $y_q$ by 1 (here $y_q$ corresponds to $g$ in the state $\mathfrak{s}$), where $1\leq q\leq r-i$. By \cref{crystaloperator}, we have the following inequality:
        
        \[(y_{q-1}-z_{q-1})\leq (x_q-y_q),\]
        \[(y_{q-2}-z_{q-2})+(y_{q-1}-z_{q-1})\leq (x_{q-1}-y_{q-1})+(x_q-y_q),\]
        \[...\]
        \[\sum_{i=1}^{q-1} (y_i-z_i)\leq \sum_{i=1}^{q-1} (x_{i+1}-y_{i+1}).\]
        The hypothesis that we have a crossing at $v$ forces $y_q=z_{q-1}$. Hence $i\neq r-1$ and $q\neq 1$.

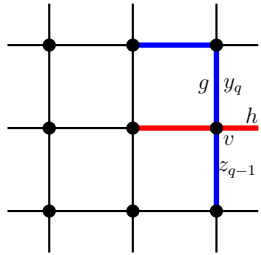
\begin{figure}[h!]
\centering
\begin{tikzpicture}[scale=.55,every node/.style={scale=.7}]

\foreach \i in {1,3,5}
	\draw[thick] (0,\i) -- (6,\i);
\foreach \j in {1,3,5}
	\draw[thick] (\j,0) to (\j,6);

\draw[line width=.7mm,red] (3,3) to (6,3);
\draw[line width=.7mm,blue] (3,5) to  (5,5) to (5,1);

\foreach \i/\j in {1/1,3/1,5/1}
\node at (\i,\j) [circle,fill,inner sep=2.5pt]{};
\foreach \i/\j in {1/1,3/1,5/1}
\node at (\i,3) [circle,fill,inner sep=2.5pt]{};
\foreach \i/\j in {1/1,3/1,5/1}
\node at (\i,5) [circle,fill,inner sep=2.5pt]{};

\node at (5.4,4) {$y_{q}$};
\node at (5.5,2) {$z_{q-1}$};
\node at (4.7,4) {$g$};
\node at (5.85,3.3) {$h$};

\node at (5.3,2.7) {$v$};
\end{tikzpicture}
\caption{Local spins near the vertex $v$. The edge $g$ corresponds to the entry $y_q$ in the Gelfand-Tsetlin pattern.}
\end{figure}

        Now by the description of $e_i$, we must have $y_{q-1}-z_{q-1}\leq x_q-y_q$. Since $y_{q}=z_{q-1}$, this implies $y_{q-1}\leq x_q$. But in a Gelfand-Tsetlin pattern we must have $y_{q-1}\geq x_q$, hence we have $y_{q-1}=x_q$.

        Furthermore, $e_i$ decreases $y_q$ by 1 also implies $y_q>x_{q+1}$. This means the horizontal edges from $x_q$ to $y_q$ are of color blue.

        Meanwhile, the horizontal edges from $y_{q-1}$ to $z_{q-1}$ are of color red. 

        Now since $y_{q-1}=x_q$, there is an intersection. Either it is an intersection between $p_a$ and $p_b$ in $\mathfrak{s}$, then the proof is done (see \cref{fig:case1}); Or it is an intersection involving another color, say green (see \cref{fig:case2}). In this case, we must have $x_q=y_{q-1}=z_{q-2}$, and we iterate the process with $y_{q}=z_{q-1}$ replaced by $y_{q-1}=z_{q-2}$. This process will terminate when there are no new colors involved, and then we will find another intersection between $p_a$ and $p_b$.

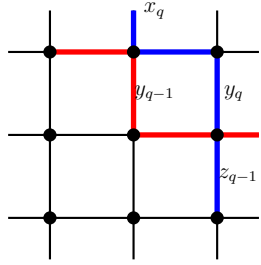
\begin{figure}[h!]
\centering
\begin{tikzpicture}[scale=.55,every node/.style={scale=.7}]

\foreach \i in {1,3,5}
	\draw[thick] (0,\i) -- (6,\i);
\foreach \j in {1,3,5}
	\draw[thick] (\j,0) to (\j,6);

\draw[line width=.7mm,red] (1,5) to (3,5) to (3,3) to (6,3);
\draw[line width=.7mm,blue] (3,6) to (3,5) to  (5,5) to (5,1);

\foreach \i/\j in {1/1,3/1,5/1}
\node at (\i,\j) [circle,fill,inner sep=2.5pt]{};
\foreach \i/\j in {1/1,3/1,5/1}
\node at (\i,3) [circle,fill,inner sep=2.5pt]{};
\foreach \i/\j in {1/1,3/1,5/1}
\node at (\i,5) [circle,fill,inner sep=2.5pt]{};

\node at (3.5,6) {$x_q$};
\node at (3.5,4) {$y_{q-1}$};
\node at (5.4,4) {$y_{q}$};
\node at (5.5,2) {$z_{q-1}$};
\end{tikzpicture}
\caption{There is another intersection between red and blue at $x_q=y_{q-1}$. }
\label{fig:case1}
\end{figure}

\begin{figure}[h!]
\centering
\begin{tikzpicture}[scale=.55,every node/.style={scale=.7}]

\foreach \i in {1,3,5}
	\draw[thick] (0,\i) -- (6,\i);
\foreach \j in {1,3,5}
	\draw[thick] (\j,0) to (\j,6);

\draw[line width=.7mm,red] (1,3) to (6,3);
\draw[line width=.7mm,blue] (1,5) to  (5,5) to (5,1);
\draw[line width=.7mm,green] (3,6) to (3,1);

\foreach \i/\j in {1/1,3/1,5/1}
\node at (\i,\j) [circle,fill,inner sep=2.5pt]{};
\foreach \i/\j in {1/1,3/1,5/1}
\node at (\i,3) [circle,fill,inner sep=2.5pt]{};
\foreach \i/\j in {1/1,3/1,5/1}
\node at (\i,5) [circle,fill,inner sep=2.5pt]{};

\node at (1.5,6) {$x_{q-1}$};
\node at (1.5,4) {$y_{q-2}$};
\node at (3.4,6) {$x_q$};
\node at (3.5,4) {$y_{q-1}$};
\node at (5.4,4) {$y_{q}$};
\node at (5.5,2) {$z_{q-1}$};
\end{tikzpicture}
\caption{At $x_q=y_{q-1}$, red and blue do not intersect. Then both paths have to go to the left and we look at $x_{q-1}=y_{q-2}$ next. }
\label{fig:case2}
\end{figure}
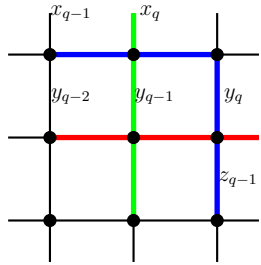

    \end{proof}

    By \cref{prop:adj}, since $p_a,p_b$ intersect at least twice, we could recolor $p_a,p_b$ so that they cross elsewhere, obtaining a new generalized state $\mathfrak{s}'$. Now in $\mathfrak{s}'$, the edges $g,h$ are of the same color.

    Now we could move the two colored edges $g,h$ to the other two edges in the sqaure, obtaining $\mathfrak{t}'$. Clearly this process will not introduce more crossings between any two colored paths, by inspecting all the four vertices of the square. Hence $\mathfrak{t}'$ is a reduced state. Finally, using \cref{prop:adj} again, we could recolor $\mathfrak{t}'$ into a closed state $\mathfrak{t}$, and the proof of \cref{prop:case1} is done. 

\end{proof}

Given an open state $\mathfrak{s}$ for a model $\mathfrak{S}^\circ_\lambda(w)$. Since open states are determined by the underlying Gelfand-Tsetlin pattern, one may ask how to read $w$ off from $\text{GTP}^\circ(\mathfrak{s})$. Here is a possible answer for the question.

\begin{defn}\label{def:y}
    {\em Let $A$ be a reduced Gelfand-Tsetlin pattern with top row $\lambda=(\lambda_1,\lambda_2,...,\lambda_r)$. We define a permutation $\tau\in S_r$ by the following inductive approach: moving from row 1 to row 2, $\tau(1)$ is defined to be the index such that $\lambda_{\tau(1)}$ is the rightmost element in row 1 that does not equal to the element on the left bottom of it. Then we map the remaining elements in row 1 in order to elements in row 2, and repeat the definition inductively.  }
\end{defn}

\begin{example}
  {\em 
    Let $\lambda=(8,6,5,0)$. Consider the following Gelfand-Tsetlin pattern
    \[\begin{pmatrix}
        8& &{\color{red} 6}&&5&&0\\
        &8&&5&&{\color{red} 0}\\
        &&6 &&{\color{red} 2}\\
        &&&{\color{red} 4}
    \end{pmatrix}\] The red elements are the rightmost element in each row that does not equal to the element on the left bottom of it. Then $\tau(1)=2, \tau(2)=4, \tau(3)=3, \tau(4)=1$.}
    \end{example}

\begin{prop}\label{prop:compare}
    Let $A$ be a reduced Gelfand-Tsetlin pattern with top row $\lambda$. By \cref{thm:BBBG1}, there exists $w\in S_r$ and an open state $\mathfrak{s}\in \mathfrak{S}^\circ_\lambda(w)$ such that $\text{GTP}^\circ(\mathfrak{s})=A$. Let $\tau$ be the permutation associated to $A$ in \cref{def:y}. Then \[\tau=w^{-1}.\]
\end{prop}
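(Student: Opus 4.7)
I would prove this by induction on $r$, the length of $\lambda$. The base case $r=1$ is trivial since both $\tau$ and $w$ are forced to be the identity. For the inductive step I aim to establish $\tau(1)=w^{-1}(1)$ and then reduce to the sub-pattern formed by rows $2,\ldots,r$ of $A$.

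Set $\nu=\lambda+\rho$, let $\mu=(\mu_1,\ldots,\mu_{r-1})$ denote the second row of $A$, and write $j=w^{-1}(1)$ for the index of the color exiting at model row $1$. The key claim is that
\[\mu_i=\nu_i \text{ for } i<j, \qquad \mu_i=\nu_{i+1} \text{ for } i\geq j,\]
so that $\mu$ is the deletion of $\nu_j$ from $\nu$. Granting this, the rightmost index $j'\geq 2$ with $\nu_{j'}\neq \mu_{j'-1}$ is exactly $j$: for $2\leq j'\leq j$ we have $\mu_{j'-1}=\nu_{j'-1}\neq \nu_{j'}$ by strict decrease of the row, while for $j'>j$ we have $\mu_{j'-1}=\nu_{j'}$. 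The convention at $j=1$ (where $\nu_1$ counts as different from its nonexistent left-bottom) fits as well, so $\tau(1)=j=w^{-1}(1)$.

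I would justify the key claim using the admissibility analysis of \cref{prop:opend}. The horizontal path of $c_j$ in row $1$ goes from column $\nu_j$ rightward to the boundary. As it moves right, each $c_{i'}$ with $i'>j$ enters from above at column $\nu_{i'}<\nu_j$; the meeting is an $a_{2,1}$ cross (higher-priority $c_j$ continues right, $c_{i'}$ continues down), so $c_{i'}$ descends vertically at the same column $\nu_{i'}$. For $i<j$, suppose $c_i$ turned right at column $\nu_i$; then at column $\nu_j$ it would meet $c_j$'s horizontal edge in an $a_{2,1}$ cross, so $c_i$ would proceed right and $c_j$ would be deflected downward. Since no color of higher priority than $c_i$ sits at a column to the right of $\nu_i$, nothing would stop $c_i$ before the right boundary, making $c_i$ the exiting color rather than $c_j$---contradiction. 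Hence $c_i$ descends vertically at column $\nu_i$, and the stated form of $\mu$ follows.

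For the inductive step, rows $2,\ldots,r$ of $A$ form a left-strict Gelfand--Tsetlin pattern $A'$ of size $r-1$ with top row $\mu$, and restricting $\mathfrak{s}$ to the subgrid of model rows $2,\ldots,r$ gives an open state whose top boundary carries the colors $c_1,\ldots,\widehat{c_j},\ldots,c_r$ in left-to-right order by the preceding analysis. The relabeling prescribed in \cref{def:y} carries the original indices through, matching precisely the forgetful map on boundary colors. Applying the inductive hypothesis to $A'$ with these relabeled colors yields $\tau(k)=w^{-1}(k)$ for $k\geq 2$, completing the induction. The main obstacle is the forced-behavior step in row $1$: the elimination argument for why no color with index $<j$ can turn right requires tracking how a hypothetical turn would propagate globally and contradict the predetermined exit color. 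Once that is settled, the rest is combinatorial bookkeeping.
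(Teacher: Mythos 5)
Your overall strategy---identify the color exiting at model row $1$ as the rightmost ``non-matching'' entry of the top row, then peel off row $1$ and induct---is the same as the paper's, but your key claim is false. You assert that the second row $\mu$ of the pattern is obtained from $\nu=\lambda+\rho$ by deleting $\nu_j$ (equivalently, that every color $c_i$ with $i\neq j$ descends at its entry column $\nu_i$). The paper's own running example refutes this: for the open state in \cref{openclosed} one has $\nu=(5,3,0)$, $\mu=(3,1)$ and $w=(1\,2\,3)$, so $j=w^{-1}(1)=3$, yet $\mu\neq(5,3)$; there $c_1$ (red) enters at column $5$ but descends at column $1$. The error in your supporting argument is the assertion that a higher-priority color traveling horizontally in row $1$ ``cannot be stopped before the right boundary.'' In the open model a horizontal color may also turn down via the $c_1$ \emph{vertex configuration} at any descent column of $A$ where no color enters from above, and this is exactly how the colors $c_i$ with $i<j$ leave row $1$. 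Your elimination argument in fact proves too much: the only open configuration with a colored top edge and an uncolored left edge is $c_2$, so $c_1$ is \emph{forced} to turn right at $\nu_1$; combined with your claim that any $c_i$ with $i<j$ turning right would exit at row $1$, this would force $w^{-1}(1)=1$ for every open state, which is false.

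The gap propagates into your inductive step: because colors need not descend at their entry columns, the colors on the vertical edges above model row $2$ need not appear in the order $c_1,\ldots,\widehat{c_j},\ldots,c_r$ from left to right (in the same example they appear as $c_2,c_1$), so the restriction is not an instance of the original setup under the straightforward relabeling you describe; this reordering is precisely why \cref{def:y} tracks positions rather than colors. The correct row-$1$ analysis is a left-to-right sweep: the current horizontal color is replaced by $c_k$ exactly when $\mu_{k-1}>\nu_k$ (a free descent column intervenes and the old horizontal color drops there), and is unchanged when $\mu_{k-1}=\nu_k$ (the entering color is crossed in an $a_{2,1}$ and sent straight down). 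Hence the exiting color is $c_m$ for the largest $m$ with $\nu_m\neq\mu_{m-1}$, which is $\tau(1)$; this is essentially the paper's argument, phrased there via the bijection between intersections and equal adjacent entries of the pattern.
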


\begin{proof}
    Suppose that there is an intersection at vertex $v$ in an open state $\mathfrak{s}$, then the vertical edge above $v$ and below $v$ are both colored, leading to two entries recoding the column in $\text{GTP}^\circ(\mathfrak{s})$. By how the Gelfand-Tsetlin pattern is formed from $\mathfrak{s}$, the two entries are located as follows: the botoom entry is on the left bottom of the top entry. This is an 1-1 correspondence between intersections in $\mathfrak{s}$ and all pairs of equal entries in the form described above.

    We prove by induction. Label the colors on the top boundary by $c_1,c_2,...,c_r$. By the correspondence described above, $p_r$ is not intersecting any other paths if and only if the top right element in $\text{GTP}^\circ(\mathfrak{s})$ does not equal to the element to the left bottom of it. 

    Meanwhile, $p_r$ is not intersecting any other paths if and only if the color $c_r$ goes out on the first row, which is equivalent to $w(r)=1$. 

    On the other hand, by the definition of $\tau$, the top right element in $\text{GTP}^\circ(\mathfrak{s})$ does not equal to the element to the left bottom of it is equivalent to $\tau(1)=r$. Hence we have proved 
    \[\tau(1)=r \Leftrightarrow w(r)=1.\]

    Now suppose in general that $\tau(1)=k$ where $1\leq k \leq r-1$. Then there are intersections recorded by the $k+1,k+2,...,r$-th entry on the first row of $\text{GTP}^\circ(\mathfrak{s})$. In the first row of an open state, every intersection is a crossing (since the left color is larger than the right color), therefore it is the color $c_k$ that goes out on the right boundary of row 1, which means, by the definition of $w$, $w(k)=1$. Thus $\tau(1)=k$ implies $w(k)=1$. The argument can be reversed, so it is an equivalence.

    To move to the next row, it suffices to forget the color $c_k$. Then the case reduces to the colors 
    \[c_1,c_2,...,c_{k-1},c_{k+1},...,c_r\] labeled from left to right above row 2, and the argument will be the same as the one for row 1. We could repeat this process to finish the proof.
\end{proof}

\begin{prop}\label{prop:rho}
 Let $\mathfrak{s} \in \mathfrak{S}_\lambda^\bullet(s_iy)$ with $s_iy>y$. Let $\tau$ be the permutation associated to $\text{GTP}^\circ({\mathfrak{s}})$ in \cref{def:y}.  If $e_i(\theta({\mathfrak{s}}))=0$ then $\tau(i)<\tau(i+1)$.
 \end{prop}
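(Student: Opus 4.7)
The plan is to translate the conclusion $\tau(i) < \tau(i+1)$ into a combinatorial condition on the Gelfand-Tsetlin pattern $A = \text{GTP}^\circ(\mathfrak{s})$, and then to apply Proposition \ref{crystaloperator} directly. Writing row $j$ of $A$ as $(A_{j,1}, \ldots, A_{j, r-j+1})$, let $q_j$ denote the largest index $l$ for which either $l = 1$ or $A_{j, l} \neq A_{j+1, l-1}$, so that $q_j$ is exactly the position in row $j$ selected by Definition \ref{def:y}. Unwinding that recipe shows that the inherited indices at row $j+1$ are obtained from those at row $j$ by deleting position $q_j$ while keeping the relative order, from which one checks directly that
\[
\tau(i) < \tau(i+1) \iff q_{i+1} \geq q_i.
\]
It therefore suffices to prove the contrapositive: if $q_{i+1} < q_i$, then $e_i(\theta(\mathfrak{s})) \neq 0$.

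Assume $q_{i+1} < q_i$; in particular $q_i \geq 2$. Set $k = r - i + 1$ and write rows $i, i+1, i+2$ of $A$ as $x_1, \ldots, x_k$; $y_1, \ldots, y_{k-1}$; $z_1, \ldots, z_{k-2}$ respectively, with the conventions $y_k = z_{k-1} = 0$ of Proposition \ref{crystaloperator}. The definition of $q_i$ gives $x_j = y_{j-1}$ for all $j > q_i$, and the assumption $q_{i+1} < q_i$ gives $y_j = z_{j-1}$ (equivalently $z_p = y_{p+1}$) for all $p \geq q_i - 1$. The left-strictness of $A$ combined with $x_{q_i} \neq y_{q_i - 1}$ forces the strict inequality $y_{q_i - 1} > x_{q_i}$.

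The main calculation is then to evaluate
\[
E_{k - q_i + 1} = \sum_{p = q_i - 1}^{k - 1} \bigl[(y_p - z_p) - (x_{p+1} - y_{p+1})\bigr]
\]
from Proposition \ref{crystaloperator} by telescoping. For each $p \in \{q_i, \ldots, k - 1\}$ the substitutions $x_{p+1} = y_p$ and $z_p = y_{p+1}$ reduce the summand to $(y_p - y_{p+1}) - (y_p - y_{p+1}) = 0$. Only the term at $p = q_i - 1$ survives, and using $z_{q_i - 1} = y_{q_i}$ it collapses to
\[
(y_{q_i - 1} - y_{q_i}) - (x_{q_i} - y_{q_i}) = y_{q_i - 1} - x_{q_i} > 0.
\]
Hence $E_{k - q_i + 1} > 0$, and Proposition \ref{crystaloperator} forces $e_i(\theta(\mathfrak{s})) \neq 0$. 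The boundary case $q_i = k$ is absorbed into the same argument since then $k - q_i + 1 = 1$ and $E_1 = y_{k-1} - x_k > 0$ directly, while $q_i = 1$ is vacuous because $q_{i+1} \geq 1$ always. The main obstacle is the careful bookkeeping of the inherited indices needed to set up the first equivalence; once $\tau(i) < \tau(i+1) \iff q_{i+1} \geq q_i$ is in hand, the rest of the argument reduces to the single telescoping identity above, and interestingly does not appear to require the contextual hypotheses $\mathfrak{s} \in \mathfrak{S}_\lambda^\bullet(s_i y)$ or $s_i y > y$, which enter the larger Case 2 argument for the main theorem.
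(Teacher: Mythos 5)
Your proof is correct and takes essentially the same route as the paper's: both reduce the claim to positional data in rows $i$, $i+1$, $i+2$ of the pattern, extract the equalities $x_j=y_{j-1}$ and $y_j=z_{j-1}$ to the right of the selected positions, and telescope to exhibit a positive $E_{k-q_i+1}$, contradicting \cref{crystaloperator}. Your explicit bookkeeping distinguishing the row positions $q_j$ from the inherited indices $\tau(j)$ makes precise a step that the paper's proof treats informally (it conflates $\tau(i)$ with the selected position in row $i$), and your observation that the hypotheses $\mathfrak{s}\in\mathfrak{S}_\lambda^\bullet(s_iy)$ and $s_iy>y$ are not needed matches the paper, whose argument likewise never uses them.
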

 
 \begin{proof}
     Proof by contradiction. Suppose that $\tau(i)=k$ and $\tau(i+1)=\ell$ and we have $k\geq \ell$. By definition, $\tau$ is a bijection, hence we in fact have $k>\ell$. The values $k,\ell$ only depend on the $i,(i+1),(i+2)$-th rows of the Gelfand-Tsetlin pattern $A$. Write the entries of $A$ down as the following:
       \[x_1,x_2,...,x_{r-i+1},\]
        \[y_1,y_2,...,y_{r-i},\]
        \[z_1,z_2,...,z_{r-i-1}.\]
     Since $\tau(i)=k$, by definition of $\tau$ we have
     
     \[x_k<y_{k-1}\]
     \[x_{k+1}=y_{k},\]
     \[x_{k+2}=y_{k+1},\]
     \[...\]
     \[x_{r-i+1}=y_{r-i}\]
     Similarly, since $\tau(i+1)=\ell$ and $k> \ell$ we have
     \[y_{k}=z_{k-1}, \]
     \[y_{k+1}=z_{k},\]
     \[y_{k+2}=z_{k+1},\]
     \[...\]
     \[y_{r-i}=z_{r-i-1}.\]
     Now, all thesse will contradict the condition $e_i(\theta(\mathfrak{s}))=0$. We have 
     \[x_{r-i+1}=y_{r-i},\]
     \[(y_{r-i-1}-z_{r-i-1})=(x_{r-i}-y_{r-i}),\]
     \[...\]
     \[(y_{k}-z_{k})=(x_{k+1}-y_{k+1}),\]
     \[(y_{k-1}-z_{k-1})>(x_k-y_k).\]
     The last inequality is a contradiction to \cref{crystaloperator}.
 \end{proof}

Now we could prove case 2:

\begin{prop}
    Suppose that $s_iy>y$. If $e_i(\theta(\mathfrak{s})) = 0$ then there exists a recoloring of the state $\mathfrak{s}$ into $\mathfrak{S}_\lambda^\bullet(y)$, i.e., there exists $\mathfrak{t}\in \mathfrak{S}_{\lambda}^\bullet(y)$ such that $\theta(\mathfrak{s})=\theta(\mathfrak{t}).$  
\end{prop}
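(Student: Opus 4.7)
The plan is to translate everything to a Bruhat-order comparison and then apply \cref{prop:y>w}. Set $A = \text{GTP}^\circ(\mathfrak{s})$ and let $w_A$ be the permutation attached to $A$ by \cref{thm:BBBG1}. Since $\mathfrak{s} \in \mathfrak{S}^\bullet_\lambda(s_iy;A)$ is nonempty, \cref{thm:state} forces $s_iy \geq w_A$ in the Bruhat order on $S_r$. I claim that in fact $y \geq w_A$; granting this, \cref{prop:y>w} supplies a (unique) $\mathfrak{t} \in \mathfrak{S}^\bullet_\lambda(y;A)$, and since $\theta$ depends only on the underlying Gelfand--Tsetlin pattern, $\theta(\mathfrak{t}) = \theta(\mathfrak{s})$ automatically, which is exactly the required recoloring.

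To establish $y \geq w_A$, I would use the lifting property of the Bruhat order: because $s_iy > y$ (so the simple reflection $s_i$ lowers $s_iy$), the inequality $w_A \leq s_iy$ implies that either $w_A \leq y$ or $s_iw_A \leq y$. Next I exploit the hypothesis $e_i(\theta(\mathfrak{s}))=0$: by \cref{prop:rho}, the permutation $\tau$ associated to $\text{GTP}^\circ(\mathfrak{s})$ satisfies $\tau(i) < \tau(i+1)$, and by \cref{prop:compare} we have $\tau = w_A^{-1}$. The condition $w_A^{-1}(i) < w_A^{-1}(i+1)$ is precisely $\ell(s_iw_A) = \ell(w_A) + 1$, i.e., $s_iw_A > w_A$. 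In the first branch of the lifting dichotomy we are done, while in the second branch $w_A < s_iw_A \leq y$ again yields $w_A \leq y$.

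In either case $y \geq w_A$, and the argument is complete by the observation above. The whole proof is essentially a formal consequence of the ingredients already assembled: \cref{thm:state} controls existence and uniqueness of closed states with prescribed Gelfand--Tsetlin pattern, while \cref{prop:rho} and \cref{prop:compare} convert the crystal-theoretic condition $e_i(\theta(\mathfrak{s}))=0$ into the Bruhat-order statement $s_iw_A > w_A$; the only new input is a clean application of the standard lifting lemma. Consequently I anticipate no genuine obstacle here, and the proof should fit in just a few lines once the reduction to $y \geq w_A$ is spelled out.
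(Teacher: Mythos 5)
Your proposal is correct and follows essentially the same route as the paper: both use \cref{prop:rho} and \cref{prop:compare} to convert $e_i(\theta(\mathfrak{s}))=0$ into $s_iw_A>w_A$, combine this with $s_iy\geq w_A$ from \cref{thm:state}, and deduce $y\geq w_A$ by the Bruhat-order lifting property before invoking the existence/uniqueness of the closed state with the given Gelfand--Tsetlin pattern. The only difference is cosmetic: you spell out the lifting dichotomy explicitly, whereas the paper cites the property from \cite{BB} in one line.
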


\begin{proof}
    Let $\tau$ be the permutation associated to $\text{GTP}^\circ(\mathfrak{s})$. By \cref{prop:rho}, $e_i(\theta({\bf x}))=0$ implies that $\tau(i)<\tau(i+1)$, which is equivalent to  $\tau s_i>\tau$ in the Bruhat order of $S_r$. Furthermore, it is equivalent to  \[s_i\tau^{-1}>\tau^{-1}.\] Meanwhile, by the uniqueness of open states, $\text{GTP}^\circ(\mathfrak{s})$ corresponds to an open state in $\mathfrak{S}^\circ_\lambda(w)$. Since $\tau=w^{-1}$ (\cref{prop:compare}), we actually have derived that
    \[s_iw>w.\]

    On the other hand, by \cref{thm:state}, the existence of $\mathfrak{s}\in \mathfrak{S}_\lambda^\bullet(s_iy)$ implies that 
    \[s_iy\geq w.\] Now we have $s_iy>y$, $s_iw>w$, and $s_iy\geq w$. By the property of the Bruhat order \cite{BB}, we have $y\geq w$. Hence by \cref{thm:state} again, there exists $\mathfrak{t}\in\mathfrak{S}^\bullet_\lambda(y)$ such that $\theta(\mathfrak{t})=\theta(\mathfrak{s}).$
\end{proof}

Finally, the main theorem \cref{mainthm2} reproves the main theorem of \cite{BBBG}.

\bibliographystyle{alpha}
\bibliography{ref}

\end{document}